\newtheorem{theorem}{Theorem}[section]
\newtheorem{claim}[theorem]{Proposition}
\newtheorem{remark}[theorem]{Remark}
\newtheorem{corollary}[theorem]{Corollary}
\newtheorem{quest}[theorem]{Question}
\begin{document}
\newcounter{gl}
\title{Derivations on symmetric quasi-Banach ideals of compact operators}
\author{A. F. Ber}
\address{Department of Mathematics,National University of Uzbekistan,
Vuzgorodok, 100174, Tashkent, Uzbekistan}
\email{ber@ucd.uz}

\author{V. I. Chilin }
\address{Department of Mathematics, National University of Uzbekistan,
Vuzgorodok, 100174, Tashkent, Uzbekistan}
\email{chilin@ucd.uz}

\author{G. B. Levitina }
\address{Department of Mathematics, National University of Uzbekistan,
Vuzgorodok, 100174, Tashkent, Uzbekistan}
\email{bob\_galina@mail.ru}

\author{F. A. Sukochev}
\address{School of Mathematics and Statistics, University of New South Wales, Sydney, NSW 2052, Australia }
\email{f.sukochev@unsw.edu.au}
\date{}
\begin{abstract}
Let $\mathcal{I,J}$ be symmetric quasi-Banach ideals of compact operators on an infinite-dimensional complex Hilbert space $H$, let $\mathcal{J:I}$ be a space of multipliers from $\mathcal{I}$ to $\mathcal{J}$. Obviously, ideals $\mathcal{I}$ and $\mathcal{J}$ are quasi-Banach algebras and it is clear that ideal $\mathcal{J}$ is a  bimodule for $\mathcal{I}$. We study the set of all derivations from $\mathcal{I}$ into $\mathcal{J}$. We show that any such derivation is automatically continuous and there exists an operator $a\in\mathcal{J:I}$ such that $\delta(\cdot)=[a,\cdot]$, moreover $\|a\|_{\mathcal{B}(H)}\leq\|\delta\|_\mathcal{I\to J}\leq 2C\|a\|_\mathcal{J:I}$, where $C$ is the modulus of concavity of the quasi-norm $\|\cdot\|_\mathcal{J}$. In the special case, when $\mathcal{I=J=K}(H)$ is a symmetric Banach ideal of compact operators on $H$ our result yields the classical fact that any derivation $\delta$ on $\mathcal{K}(H)$ may be written as $\delta(\cdot)=[a,\cdot]$, where $a$ is some bounded operator on $H$ and $\|a\|_{\mathcal{B}(H)}\leq\|\delta\|_\mathcal{I\to I}\leq 2\|a\|_{\mathcal{B}(H)}$.
\end{abstract}
\maketitle

\bigskip

\section{Introduction}
\large
Let $\mathcal{I,J}$ be ideals of compact operators on $H$. Obviously, $\mathcal{J}$ is an $\mathcal{I}$-module and we can consider the set ${\rm Der}(\mathcal{I,J})$ of all derivations $\delta\colon \mathcal{I}\to \mathcal{J}$. Consider two closely related questions (here, $\mathcal{B}(H)$ is the set of all bounded linear operators on $H$):

\begin{quest}\label{q1} Let $\delta\in {\rm Der}(\mathcal{I,J})$. Does there exist a bounded operator $a\in\mathcal{B}(H)$ such that $\delta(x)=[a,x]$ for every $x\in \mathcal{I}$?
\end{quest}

\begin{quest}\label{q2}  What is the set $D(\mathcal{I,J})=\{a\in\mathcal{B}(H): \quad [a,x]\in\mathcal{J},\ \forall x\in\mathcal{I}\}$?
\end{quest}

The second question was completely answered by M. J. Hoffman in \cite{Hoffman}, who also coined the term $\mathcal{J}$-essential commutant of $\mathcal{I}$ for the set $D(\mathcal{I,J})$. We completely answer the first question in the setting when the ideals $\mathcal{I,J}$ are symmetric quasi-Banach (see precise definition in the next section). In this setting, it is also natural to ask

\begin{quest}\label{q3}
Let $\delta\in {\rm Der}(\mathcal{I,J})$. Is it continuous?
\end{quest}

Of course, if $\delta\in {\rm Der}(\mathcal{I,J})$ is such that $\delta(x)=[a,x]$  for some $a\in\mathcal{B}(H)$ (that is when $\delta$ is implemented by the operator $a$), then $\delta$ is a continuous mapping from $(\mathcal{I},\|\cdot\|_{\mathcal{I}})$ to $(\mathcal{J},\|\cdot\|_{\mathcal{J}})$, that is a positive answer to Question \ref{q1} implies also a positive answer to Question \ref{q3}. However, in this paper, we are establishing a positive answer to Question \ref{q1} via firstly answering Question \ref{q3} in positive. Both these results (Theorem \ref{th6} and Theorem \ref{th7}) are proven in Section 3. We also provide a detailed discussion of the $\mathcal{J}$-essential commutant of $\mathcal{I}$ in Section 4.

It is also instructive to outline a connection between Questions \ref{q1} and \ref{q3} with some classical results.  It is well known (\cite{Sak}, Lemma 4.1.3) that every derivation on a $C^*$-algebra is norm continuous. In fact, this also easily follows from the following well-known fact (\cite{Sak}, Corollary 4.1.7) that every derivation on a $C^*$-algebra $\mathcal{M}\subset\mathcal{B}(H)$ is given by a reduction of an inner derivation on a von Neumann algebra $\overline{\mathcal{M}}^{wo}$ (the weak closure of $\mathcal{M}$ in the $C^*$-algebra $\mathcal{B}(H)$). The latter result (\cite{Sak}, Lemma 4.1.4 and Theorem 4.1.6), in the setting when $\mathcal{M}$ is a $C^*$-algebra $\mathcal{K}(H)$  of all compact operators on $H$ states that for every derivation $\delta$ on $\mathcal{M}$ there exists an operator $a\in\mathcal{B}(H)$ such that $\delta(x)=[a,x]$ for every $x\in\mathcal{K}(H)$, in addition, $\|a\|_{\mathcal{B}(H)}\leqslant\|\delta\|_{\mathcal{M}\to\mathcal{M}}$. The ideal $\mathcal{K}(H)$ equipped with the uniform norm is an element from the class of so-called symmetric Banach operator ideals in $\mathcal{B}(H)$ and evidently this example also suggests the statements of Questions \ref{q1} and \ref{q3}. In the case of Schatten ideals $C_p(H)=\{x\in\mathcal{K}(H):\|x\|_p=\mathrm{tr}(|x|^p)^{\frac{1}{p}}<\infty\}$, where $|x|=(x^*x)^{\frac{1}{2}},1\leqslant p<\infty$,  somewhat similar problems concerning derivations from $C_p(H)$ to $C_r(H)$ were also considered in the work by E.Kissin and V.S.Shulman \cite{K-Sh}. In particular,  it is shown in \cite{K-Sh} that every closed $*$-derivation $\delta$ from $C_p(H)$ to $C_r(H)$ is implemented by a symmetric operator $S$, in addition the domain  $D(\delta)$ of $\delta$ is dense $*$-subalgebra in $C_p(H)$. In our case, we have $D(\delta)=C_p$ and it follows from our results that the derivation $\delta$ is necessarily continuous and implemented by an operator $a\in\mathcal{B}(H)$.

It is also worth to mention that Hoffman's results in \cite{Hoffman} were an extension of earlier results by J.W.Calkin \cite{Calkin} who considered the case when $\mathcal{I}=\mathcal{B}(H)$. Recently, Calkin's and Hoffman's results were extended to the setting of general von Neumann algebras in  \cite{B-S} and, in the special setting when $\mathcal{I}=\mathcal{J}$, Questions \ref{q1} and \ref{q3} were also discussed in \cite{B-S_d}. However, our methods in this paper are quite different from all the approaches applied in \cite{B-S, Calkin, Hoffman, K-Sh}.

As a corollary of solving Questions \ref{q1} and \ref{q3}, in Theorem \ref{pr10} we present a description of all derivations $\delta$ acting from a symmetric quasi-Banach ideal $\mathcal{I}$ into a symmetric quasi-Banach ideal $\mathcal{J}$. Indeed, every such derivation $\delta$ is an inner derivation $\delta(\cdot)=\delta_a(\cdot)=[a,\cdot]$, where $a$ is some operator from $\mathcal{J}$-dual space $\mathcal{J:I}$ of $\mathcal{I}$. Recall that $D(\mathcal{I,J})=\mathcal{J:I}+\mathbb{C}\mathbbm{1}$ \cite{Hoffman}, where $\mathbbm{1}$ is the identity operator in $\mathcal{B}(H)$. Theorem \ref{pr10} gives a complete answer to Question \ref{q2}. In particular, using the equality  $C_r:C_p=C_q, 0<r<p<\infty, \frac{1}{q}=\frac{1}{r}-\frac{1}{p}$ we recover Hoffman's result that any derivation $\delta\colon C_p\to C_r$ has a form $\delta=\delta_a$ for some $a\in C_q$. If $0<p\leqslant r<\infty$, then $D(C_p,C_r)=\mathcal{B}(H)$.

When $\mathcal{I,J}$ are arbitrary symmetric quasi-Banach ideals of compact operators and $\mathcal{I}\subseteq\mathcal{J}$, then $\mathcal{J:I}=\mathcal{B}(H)$, and, in this case, a linear operator $\delta\colon\mathcal{I}\to\mathcal{J}$ is a derivation if and only if  $\delta=\delta_a$ for some $a\in\mathcal{B}(H)$. However, if $\mathcal{I}\nsubseteq\mathcal{J}$, then to obtain a complete description of $\mathcal{J}$-essential commutant of $\mathcal{I}$  we need a procedure of finding $\mathcal{J:I}$.

To this end, we use the classical Calkin's correspondence  between two-sided ideals $\mathcal{I}$ of compact operators and rearrangement invariant solid sequence subspaces $E_\mathcal{I}$ of the space $c_0$ of null sequences. The meaning of this correspondence is the following. Take a compact operator $x\in\mathcal{I}$ and consider a sequence of eigenvalues $\{\lambda_n(x)\}_{n=1}^\infty\in c_0$. For each sequence $\xi=\{\xi_n\}\in c_0$, let $\xi^*=\{\xi_n^*\}_{n=1}^\infty$ denote a decreasing rearrangement of the sequence $|\xi|=\{|\xi_n|\}_{n=1}^\infty$. The set
$$E_\mathcal{I}:=\{\{\xi_n\}_{n=1}^\infty\in c_0:\{\xi_n^*\}_{n=1}^\infty=\{\lambda_n^*(|x|)\}_{n=1}^\infty \mbox{ for some } x\in\mathcal{I}\},$$
is a  solid linear subspace in the Banach lattice $c_0$. In addition, the space $E_\mathcal{I}$ is rearrangement invariant, that is if $\eta\in c_0,\xi\in E_\mathcal{I},\eta^*=\xi^*$, then $\eta\in E_\mathcal{I}$. Conversely, if $E$ is a rearrangement invariant solid sequence subspace in $c_0$, then
$$C_E=\{x\in\mathcal{K}(H):\{\lambda_n(|x|)\}_{n=1}^\infty\in E\}$$
is a two-sided ideal of compact operators from $\mathcal{B}(H)$.

For the proof of the following theorem we refer to Calkin's original paper, \cite{Calkin}, and to B.~Simon's book, (\cite{Simon},Theorem 2.5).

\begin{theorem}
The correspondence $\mathcal{I} \leftrightarrow E_\mathcal{I}$ is a bijection between rearrangement invariant solid spaces in $c_0$ and two-sided ideals of compact operators.
\end{theorem}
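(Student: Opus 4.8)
The plan is to exhibit the two assignments $\Phi\colon \mathcal{I}\mapsto E_\mathcal{I}$ and $\Psi\colon E\mapsto C_E$ as mutually inverse, after checking that each one lands in the appropriate class. Throughout, write $s(x)=\{\lambda_n(|x|)\}_{n=1}^\infty$ for the singular value sequence of a compact $x$, so that $s(x)$ is already a decreasing rearrangement. The single structural fact on which everything rests is the \emph{invariance of ideals under singular-value equivalence}: in an eigenbasis of $|x|$ one has $|x|=u\,\mathrm{diag}(s(x))\,u^*$, and $x=w\,\mathrm{diag}(s(x))\,z$ for suitable partial isometries, so that a two-sided ideal (which absorbs multiplication by bounded operators on both sides) satisfies $x\in\mathcal{I}\iff\mathrm{diag}(s(x))\in\mathcal{I}$. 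Combined with the observation that $s(y)\le s(x)$ pointwise means $\mathrm{diag}(s(y))=D\,\mathrm{diag}(s(x))$ for the diagonal contraction $D=\mathrm{diag}(s(y)_n/s(x)_n)$, this upgrades to: if $x\in\mathcal{I}$ and $s(y)\le s(x)$ pointwise, then $y\in\mathcal{I}$.

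First I would verify that $\Phi$ is well defined. That $E_\mathcal{I}\subseteq c_0$ and that $E_\mathcal{I}$ is rearrangement invariant are immediate from the definition, solidity is exactly the pointwise-domination statement above transported through the correspondence $\xi\leftrightarrow\mathrm{diag}(\xi^*)$, and closure under scalar multiples is trivial. The delicate point is closure under addition: given $\xi,\eta\in E_\mathcal{I}$, realize $\xi^*=s(x)$ and $\eta^*=s(y)$ with $x,y\in\mathcal{I}$, pass to a common basis so that $\mathrm{diag}(\xi^*),\mathrm{diag}(\eta^*)\in\mathcal{I}$, and conclude $\xi^*+\eta^*=s(\mathrm{diag}(\xi^*+\eta^*))\in E_\mathcal{I}$ since $\mathcal{I}$ is linear. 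To pass from $\xi^*+\eta^*$ to $\xi+\eta$ I would invoke Ky Fan's inequality $s(x+y)\prec\prec s(x)+s(y)$ (Hardy--Littlewood submajorization) together with the lattice fact, isolated next, that a solid rearrangement invariant subspace of $c_0$ is automatically closed under submajorization.

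The lattice fact to establish is: if $\eta\in E$ with $E$ solid and rearrangement invariant, and $\xi\prec\prec\eta$, then $\xi\in E$. I would prove it through the Hardy--Littlewood--P\'olya decomposition of submajorization into two steps, namely the existence of $\zeta$ with $\xi^*\le\zeta^*$ pointwise and $\zeta^*\prec\eta^*$ an honest majorization: solidity disposes of the first step, and a Birkhoff-type argument on finite truncations (each truncation of $\zeta^*$ being a finite convex combination of permutations of a truncation of $\eta^*$), controlled by solidity and rearrangement invariance, disposes of the second. With this in hand, the estimate $s(axb)\le\|a\|\,\|b\|\,s(x)$ shows $C_E$ absorbs two-sided multiplication (via solidity), invariance of $s$ under adjoints gives self-adjointness, and the same Ky Fan plus submajorization-closure argument gives closure under addition; hence $\Psi$ is well defined.

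Finally I would check the two compositions. For $\Psi\circ\Phi=\mathrm{id}$: membership $x\in C_{E_\mathcal{I}}$ says $s(x)\in E_\mathcal{I}$, i.e.\ $s(x)=s(x')$ for some $x'\in\mathcal{I}$ (as $s(x)$ is already decreasing), whence $x\in\mathcal{I}$ by singular-value equivalence, while the reverse inclusion is immediate. For $\Phi\circ\Psi=\mathrm{id}$: membership $\xi\in E_{C_E}$ says $\xi^*=s(x)$ for some $x\in C_E$, i.e.\ $\xi^*\in E$, i.e.\ $\xi\in E$ by rearrangement invariance. The main obstacle is the submajorization step of the third paragraph, since it is the only place where genuine analysis enters (Ky Fan's inequality and a limiting/truncation argument for majorization of infinite null sequences); the remainder is bookkeeping with singular values and the equivalence $\xi\leftrightarrow\mathrm{diag}(\xi^*)$.
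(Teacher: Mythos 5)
The paper itself does not prove this theorem (it defers to Calkin and to Simon's book), so the comparison below is with the classical argument, whose ingredients the paper reproduces in Section~4. Your overall architecture --- exhibiting $\mathcal{I}\mapsto E_\mathcal{I}$ and $E\mapsto C_E$ as mutually inverse and reducing everything to the equivalence $x\in\mathcal{I}\Leftrightarrow\mathrm{diag}(s(x))\in\mathcal{I}$ --- is exactly the standard one. However, there is a genuine gap at the step you yourself identify as the crux: the ``lattice fact'' that every solid rearrangement-invariant linear subspace of $c_0$ is automatically closed under Hardy--Littlewood submajorization is \emph{false}. Take
$$E=\bigl\{\xi\in c_0:\ \exists\,C>0,\ m\in\mathbb{N}\ \text{such that}\ \xi_n^*\leqslant Ce^{-n/m}\ \text{for all}\ n\bigr\}.$$
This $E$ is solid and rearrangement invariant by construction, and it is a linear subspace because the pointwise estimate $(\xi+\zeta)^*_n\leqslant\xi^*_{\lceil n/2\rceil}+\zeta^*_{\lceil n/2\rceil}$ merely worsens the exponent from $e^{-n/m}$ to $e^{-n/2m}$. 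Yet $\eta=\{e^{-n}\}_{n=1}^\infty\in E$ has bounded partial sums, so any sufficiently small positive multiple of $\{n^{-2}\}_{n=1}^\infty$ is submajorized by $\eta$ while lying outside $E$. (This is precisely the distinction between symmetric and \emph{fully} symmetric spaces; not every Calkin space is fully symmetric.) Consequently the passage from $\xi^*+\eta^*\in E_\mathcal{I}$ to $(\xi+\eta)^*\in E_\mathcal{I}$ via Ky Fan's inequality does not go through, the same defect undermines your proof that $C_E$ is closed under addition, and the Birkhoff-type truncation argument you sketch cannot be repaired because the conclusion itself fails.

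The correct replacement is the one the paper deploys elsewhere (see the computation preceding Proposition \ref{sigma}, Proposition \ref{sigma} itself, and inequality (\ref{sv})): use the pointwise inequality $s_n(x+y)\leqslant s_{\lceil n/2\rceil}(x)+s_{\lceil n/2\rceil}(y)$, i.e. $\{s_n(x+y)\}_{n=1}^\infty\leqslant\sigma_2\bigl(\{s_n(x)+s_n(y)\}_{n=1}^\infty\bigr)$, together with the observation that $\sigma_2$ maps any solid rearrangement-invariant linear subspace into itself, since $\sigma_2(\xi)=\eta^{(1)}+\eta^{(2)}$ where $(\eta^{(1)})^*=(\eta^{(2)})^*=\xi^*$; this uses only solidity, rearrangement invariance and linearity, never submajorization. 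With that substitution in both directions, the remainder of your argument --- the reduction to diagonal operators via partial isometries, the absorption estimate $s_n(axb)\leqslant\|a\|_{\mathcal{B}(H)}\|b\|_{\mathcal{B}(H)}s_n(x)$ of Proposition \ref{s-chisla1}, and the two composition checks --- is correct and coincides with the classical proof.
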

In the recent paper \cite{K-S} this correspondence has been extended to symmetric quasi-Banach (Banach) ideals and $p$-convex symmetric quasi-Banach (Banach) sequence spaces.
We use the notation $\|\cdot\|_{\mathcal{B}(H)}$ and $\|\cdot\|_\infty$ to denote the uniform norm on $\mathcal{B}(H)$ and on $l_\infty$ respectively.

Recall, that a two-sided ideal $\mathcal{I}$ of compact operators from $B(H)$ is said to be symmetric quasi-Banach (Banach) ideal if it is equipped with a complete quasi-norm (respectively, norm) $\|\cdot\|_{\mathcal{I}}$ such that
$$\|axb\|_{\mathcal{I}}\leqslant\|a\|_{\mathcal{B}(H)}\|x\|_{\mathcal{I}}\|b\|_{\mathcal{B}(H)},\quad x\in\mathcal{I},a,b\in \mathcal{B}(H).$$
A symmetric sequence space $E\subset c_0$ is a rearrangement invariant solid sequence space equipped with a complete quasi-norm (respectively, norm) $\|\cdot\|_E$ such that
$\|\eta\|_E\leq \|\xi\|_E$ for every $\xi\in E$ and $\eta\in c_0$ such that $\eta^*\leqslant\xi^*$.

It is clear that if $(\mathcal{I}, \|\cdot\|_{\mathcal{I}})$ is a symmetric quasi-Banach ideal of compact operators, $x\in\mathcal{I}$ and  $y\in \mathcal{B}(H)$ is such that $\{\lambda_n^*(|y|)\}_{n=1}^\infty\leqslant\{\lambda_n^*(|x|)\}_{n=1}^\infty$, then $y\in\mathcal{I}$ and $\|y\|_{\mathcal{I}}\leq\|x\|_{\mathcal{I}}$. In Theorem \ref{inverse K-S} we show that if $E_\mathcal{I}$ is a symmetric space in $c_0$ corresponding to symmetric quasi-Banach ideal $\mathcal{I}$, then setting $\|\xi\|_{E_\mathcal{I}}:=\|x\|_\mathcal{I}$ (where $x\in \mathcal{I}$ is such that $\xi^*=\{\lambda_n^*(|x|)\}_{n=1}^\infty$) we obtain that $(E_\mathcal{I}, \|\cdot\|_{E_\mathcal{I}})$ is a symmetric quasi-Banach sequence space. The converse implication is much harder \cite{K-S}.
\begin{theorem}\label{K_S}
If $(E,\|\cdot\|_E)$ is a symmetric Banach (respectively, $p$-convex symmetric quasi-Banach) sequence space in $c_0$, then $C_E$ equipped with the norm
$$\|x\|_{C_E}:=\|\{\lambda_n^*(|x|)\}_{n=1}^\infty\|_E$$
is a symmetric Banach (respectively, $p$-convex quasi-Banach) ideal of compact operators from $\mathcal{B}(H)$.
\end{theorem}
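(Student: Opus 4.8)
The plan is to verify, in turn, that $C_E$ is a two-sided ideal, that the functional $\|x\|_{C_E}:=\|s(x)\|_E$ — where I write $s(x):=\{\lambda_n^*(|x|)\}_{n=1}^\infty$ for the singular value sequence — is a norm (respectively a $p$-convex quasi-norm), and finally that $(C_E,\|\cdot\|_{C_E})$ is complete. The ideal property and the symmetric estimate $\|axb\|_{C_E}\le\|a\|_{\mathcal{B}(H)}\|x\|_{C_E}\|b\|_{\mathcal{B}(H)}$ are the routine part: they follow at once from the classical singular value inequality $\lambda_n^*(|axb|)\le\|a\|_{\mathcal{B}(H)}\|b\|_{\mathcal{B}(H)}\lambda_n^*(|x|)$ together with the solidity and rearrangement invariance of $E$. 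Homogeneity of $\|\cdot\|_{C_E}$ is clear, and $\|x\|_{C_E}=0$ forces $s(x)=0$, i.e. $x=0$. Thus everything reduces to the (quasi-)triangle inequality and to completeness.

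For the triangle inequality I would exploit the interplay between Hardy--Littlewood submajorization and symmetric norms. Write $\alpha\prec\prec\beta$ for $\sum_{k=1}^n\alpha_k^*\le\sum_{k=1}^n\beta_k^*$, $n\ge1$. The first ingredient is the Ky Fan inequality $s(x+y)\prec\prec s(x)+s(y)$, the right-hand side being already non-increasing. The second is the monotonicity of a symmetric Banach norm under submajorization: if $\alpha\prec\prec\beta$ in $c_0$ then $\|\alpha\|_E\le\|\beta\|_E$; this is a standard consequence of the fact that a symmetric Banach space is an exact interpolation space for the couple $(\ell_1,\ell_\infty)$ (Calder\'on), equivalently that the doubly substochastic operator carrying $\beta$ to $\alpha$ is a contraction on $\ell_1$ and on $\ell_\infty$, hence on $E$. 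In the Banach case these two facts give directly
\[
\|x+y\|_{C_E}=\|s(x+y)\|_E\le\|s(x)+s(y)\|_E\le\|s(x)\|_E+\|s(y)\|_E=\|x\|_{C_E}+\|y\|_{C_E},
\]
the last step being the triangle inequality of the \emph{sequence} norm $\|\cdot\|_E$.

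For the $p$-convex quasi-Banach case ($0<p\le1$) I would first linearise the convexity by passing to the $1/p$-concavification $F$ of $E$, namely the space with $\|\xi\|_F:=\big\||\xi|^{1/p}\big\|_E^{\,p}$. The $p$-convexity of $E$ is exactly what makes $\|\cdot\|_F$ a genuine norm, so that $F$ is a symmetric Banach sequence space, and by the Banach case already settled $C_F$ is a symmetric Banach ideal. Since $\lambda_n^*(|x|^p)=\lambda_n^*(|x|)^p$, one has the key identity $\|x\|_{C_E}^p=\big\||x|^p\big\|_{C_F}$. The analogue of Ky Fan needed here is the Rotfel'd-type submajorization $s(x+y)^p\prec\prec s(x)^p+s(y)^p$, that is $\sum_{k=1}^n\lambda_k^*(|x+y|)^p\le\sum_{k=1}^n\lambda_k^*(|x|)^p+\sum_{k=1}^n\lambda_k^*(|y|)^p$ for every $n$. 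Granting it, monotonicity under submajorization in the Banach space $F$ together with the triangle inequality of $\|\cdot\|_F$ yields
\[
\|x+y\|_{C_E}^p=\big\|s(x+y)^p\big\|_F\le\big\|s(x)^p+s(y)^p\big\|_F\le\|x\|_{C_E}^p+\|y\|_{C_E}^p,
\]
so that $\|\cdot\|_{C_E}$ is in fact a $p$-norm; in particular it is a quasi-norm with modulus of concavity at most $2^{1/p-1}$ and $C_E$ is $p$-convex.

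The Rotfel'd submajorization is the main obstacle and the point where genuine operator theory enters, since the map $t\mapsto t^p$ does not preserve submajorization in general. I would obtain it from three classical facts: Thompson's triangle inequality, providing unitaries $u,v$ with $|x+y|\le u|x|u^*+v|y|v^*$; the operator monotonicity of $t\mapsto t^p$ on $[0,\infty)$ for $0<p\le1$, which upgrades this to $|x+y|^p\le(u|x|u^*+v|y|v^*)^p$; and the Bourin--Uchiyama subadditivity inequality $\|(A+B)^p\|\le\|A^p+B^p\|$, valid for $A,B\ge0$ and every symmetric norm. Applying the last inequality to the Ky Fan norms $z\mapsto\sum_{k\le n}\lambda_k^*(|z|)$ and using unitary invariance produces exactly the partial-sum inequality above. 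Finally, completeness of $(C_E,\|\cdot\|_{C_E})$ I would deduce from completeness of $E$ via the continuous embedding $C_E\hookrightarrow\mathcal{K}(H)$, which holds because $\|\xi\|_E\ge\|e_1\|_E\,\|\xi\|_\infty$ by solidity ($e_1$ the first unit vector): this lets one extract a uniform limit of a $\|\cdot\|_{C_E}$-Cauchy sequence inside $\mathcal{K}(H)$, after which the continuity of each singular value in the uniform norm and a Fatou-type lower semicontinuity of $\|\cdot\|_E$ identify the limit as an element of $C_E$ with convergence in $\|\cdot\|_{C_E}$. This last, technical, step — transferring the metric completeness of the sequence space to the operator level — is the place where one must argue most carefully.
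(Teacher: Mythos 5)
The central step of your argument --- passing from the Ky Fan submajorization $s(x+y)\prec\prec s(x)+s(y)$ to the norm inequality $\|s(x+y)\|_E\le\|s(x)+s(y)\|_E$ --- rests on the claim that every symmetric Banach sequence space is monotone under Hardy--Littlewood submajorization, which you justify by asserting that such a space is an exact interpolation space for $(\ell_1,\ell_\infty)$. That is exactly where the argument breaks down. The Calder\'on--Mityagin theorem \emph{characterizes} the exact interpolation spaces for $(\ell_1,\ell_\infty)$ as the \emph{fully} symmetric ones; it does not say that every symmetric space is of this kind. The definition used in this paper requires only $\|\xi\|_E\le\|\eta\|_E$ when $\xi^*\le\eta^*$ pointwise, which is strictly weaker than monotonicity under $\prec\prec$. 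In finite dimensions one can indeed deduce submajorization monotonicity from the triangle inequality via the decomposition of doubly substochastic matrices into convex combinations of sub-permutation matrices, but extending this to infinite sequences in $c_0$ requires a Fatou-type lower semicontinuity of $\|\cdot\|_E$ that is not among the hypotheses. Your quasi-Banach argument funnels into the same unproved monotonicity for the concavified space $F$ (and in addition needs the $p$-convexity constant of $E$ to equal $1$ for $\|\cdot\|_F$ to be literally a norm; a renorming fixes this but must be tracked). The same Fatou-type issue resurfaces, as you yourself note, in your completeness argument.

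This gap is not a technicality: it is the content of the theorem. What you have written is essentially the classical Gohberg--Krein/Schatten argument, which proves the statement for \emph{fully} symmetric $E$. The present paper does not prove Theorem \ref{K_S} at all; it quotes it from Kalton--Sukochev \cite{K-S} and explicitly remarks that this implication ``is much harder.'' The point of \cite{K-S} is precisely to establish the triangle inequality (and its $p$-convex quasi-normed analogue) \emph{without} assuming full symmetry, and this requires genuinely new machinery --- in particular a strengthened ``uniform'' submajorization under which arbitrary symmetric quasi-norms can be shown to be monotone. (For quasi-norms plain submajorization monotonicity fails outright: in $\ell_p$ with $p<1$ one has $(\tfrac1n,\dots,\tfrac1n,0,\dots)\prec\prec(1,0,\dots)$ while the quasi-norms blow up.) So either you must add the hypothesis that $E$ is fully symmetric, in which case you prove a strictly weaker statement, or you must supply the missing monotonicity, which is the hard part of \cite{K-S}.
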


In \cite{Gar} it was shown that for $\mathcal{J}=C_1$  is the
trace class and an arbitrary two-sided ideal $\mathcal{I}$ with
$C_1\subset\mathcal{I}\subset\mathcal{K}(H)$ the $C_1$-dual space
(also sometimes called the K\" othe dual) $\mathcal{I}^\times:=
C_1:\mathcal{I}$ of $\mathcal{I}$ is precisely an ideal
corresponding to symmetric sequence space $l_1:E_\mathcal{I}$,
where $l_1:E_\mathcal{I}$ is $l_1$-dual space of $E_\mathcal{I}$
(see precise definitions in section 4). If $\mathcal{I}$ is a
symmetric Banach ideal of compact operators, then $C_1$-dual space
$\mathcal{I}^\times$ is symmetric Banach ideal of compact operator
and norms on $C_1:\mathcal{I}$ and $C_{l_1:E_\mathcal{I}}$ are
equal \cite{DDdP}. We extend these results to arbitrary symmetric
quasi-Banach ideals $\mathcal{I,J}$ of compact operators with
$\mathcal{I}\nsubseteq\mathcal{J}$, that allows to describe
completely all derivations from one symmetric quasi-Banach ideal
to another. In addition, we use the technique of $\mathcal{J}$-dual spaces in order to obtain the estimation $\|\delta_a\|_{\mathcal{I}\to\mathcal{J}}\leqslant 2\|a\|_\mathcal{J:I}$ for an arbitrary derivation $\delta=\delta_a\colon\mathcal{I}\to\mathcal{J},a\in\mathcal{J:I}$. This result extends a well-known estimation $\|\delta_a\|_{\mathcal{M}\to\mathcal{M}}\leqslant 2\|a\|_{\mathcal{B}(H)},a\in\mathcal{B}(H)$ obtained by L.Zsido \cite{Zsido} for a derivation $\delta_a$ acting in an arbitrary von Neumann algebra $\mathcal{M}\subset\mathcal{B}(H)$.

\section{Preliminaries}
Let $H$ be an infinite-dimensional Hilbert space over the field $\mathbb{C}$ of complex numbers and $\mathcal{B}(H)$ be the $C^*$-algebra of all bounded linear operators on $H$.
Set
\begin{gather*}\mathcal{B}_h(H)=\{x\in \mathcal{B}(H): x^*=x\},
\\
\mathcal{B}_+(H)=\{x\in \mathcal{B}_h(H): \forall \varphi\in H \quad(x(\varphi),\varphi)\geqslant 0\},
\\
\mathcal{P}(H)=\{p\in\mathcal{B}(H):p=p^2=p^*\}.
\end{gather*}
It is well known (\cite{K-R1}, Ch. 2, \S 4) that
$\mathcal{B}_+(H)$ is a proper cone in  $\mathcal{B}_h(H)$ and
with the partial order given by $x\leqslant y \Leftrightarrow
y-x\in \mathcal{B}_+(H)$ the set $\mathcal{B}_h(H)$ is a partially
ordered vector space over the field $\mathbb{R}$ of real numbers,
satisfying $y^*xy\geqslant 0$ for all $y\in \mathcal{B}(H), x\in
\mathcal{B}_+(H)$. Note, that
$-\|x\|_{\mathcal{B}(H)}\mathbbm{1}\leqslant x\leqslant
\|x\|_{\mathcal{B}(H)}\mathbbm{1}$ for all $x\in
\mathcal{B}_h(H)$. It is known (see e.g. \cite{K-R1}, Ch.4, \S 2,
Proposition 4.2.3) that every operator $x$ in $\mathcal{B}_h(H)$
can be uniquely written as follows: $x=x_+-x_-$, where
$x_+,x_-\in\mathcal{B}_+(H)$ and $x_+x_-=0$. In addition, every
operator $x\in\mathcal{B}(H)$ can be represented as $x=u|x|$,
where $|x|=(x^*x)^{\frac{1}{2}}$ and $u$ is a partial isometry in
$\mathcal{B}(H)$ such that $u^*u$ is the right support of $x$
(\cite{R-S}, Ch. \setcounter{gl}{6}\Roman{gl}, \S 5, Theorem
\setcounter{gl}{6}\Roman{gl}.10).

We need the following useful proposition.

\begin{claim}(\cite{M-Ch}, Ch. 2, \S 4, Proposition 2.4.3)\label{pr} If
 $x,y\in\mathcal{B}_+(H), x\leqslant y$, then there exists an operator $a\in\mathcal{B}(H)$ such that $\|a\|_{\mathcal{B}(H)}\leqslant 1$ and $x=a^*ya$.
\end{claim}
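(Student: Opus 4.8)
The plan is to reduce the factorization $x = a^* y a$ to a left-multiplication factorization of the square roots, which is the operator-theoretic incarnation of Douglas' factorization lemma. First I would pass to the positive square roots $x^{1/2}, y^{1/2} \in \mathcal{B}_+(H)$, which are self-adjoint, and observe that the order relation $x \leqslant y$ translates into a pointwise estimate on these square roots. Indeed, for every $\varphi \in H$ one has $\|x^{1/2}\varphi\|^2 = (x\varphi,\varphi) \leqslant (y\varphi,\varphi) = \|y^{1/2}\varphi\|^2$, so that $\|x^{1/2}\varphi\| \leqslant \|y^{1/2}\varphi\|$ for all $\varphi$.

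This estimate is exactly what is needed to build a contraction intertwining the two square roots. I would define a linear map $b_0$ on the range of $y^{1/2}$ by $b_0(y^{1/2}\varphi) = x^{1/2}\varphi$. The inequality above guarantees both that $b_0$ is well defined (if $y^{1/2}\varphi = y^{1/2}\psi$, then $y^{1/2}(\varphi-\psi)=0$, whence $x^{1/2}(\varphi-\psi)=0$) and that it is a contraction on $\mathrm{ran}(y^{1/2})$. Extending $b_0$ by continuity to $\overline{\mathrm{ran}(y^{1/2})}$ and by zero on its orthogonal complement produces an operator $b \in \mathcal{B}(H)$ with $\|b\|_{\mathcal{B}(H)} \leqslant 1$ satisfying $b\,y^{1/2} = x^{1/2}$.

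It then remains only to rearrange this identity into the desired symmetric form. Setting $a = b^*$, so that $\|a\|_{\mathcal{B}(H)} = \|b\|_{\mathcal{B}(H)} \leqslant 1$, and taking adjoints in $b\,y^{1/2} = x^{1/2}$ (recall that $x^{1/2}$ and $y^{1/2}$ are self-adjoint) yields $y^{1/2} a = x^{1/2}$ as well, i.e. $x^{1/2} = a^* y^{1/2} = y^{1/2} a$. Multiplying the two descriptions of $x^{1/2}$ gives $x = (x^{1/2})^2 = (a^* y^{1/2})(y^{1/2} a) = a^*\, y\, a$, which is the claim.

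I expect the only genuine step to be the construction of $b$: the verification that $b_0$ is well defined on $\mathrm{ran}(y^{1/2})$ together with its contractive extension to the closure (with zero on the complement) constitutes the heart of the argument, whereas the passage to square roots and the final algebraic rearrangement are routine. One could alternatively invoke Douglas' lemma directly with $A = x^{1/2}$ and $B = y^{1/2}$, since $AA^* = x \leqslant y = BB^*$, but the explicit construction above keeps the bound $\|a\|_{\mathcal{B}(H)} \leqslant 1$ transparent.
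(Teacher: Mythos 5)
Your proof is correct and complete: the pointwise estimate $\|x^{1/2}\varphi\|\leqslant\|y^{1/2}\varphi\|$, the contractive intertwiner $b$ with $by^{1/2}=x^{1/2}$, and the final rearrangement $x=a^*ya$ with $a=b^*$ all check out, and this is exactly the standard Douglas-factorization argument. The paper itself gives no proof of this proposition --- it is quoted verbatim from the reference [M-Ch, Ch.~2, \S 4, Proposition 2.4.3] --- so there is nothing internal to compare against; your argument supplies precisely the proof one would expect that reference to contain.
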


Let $\mathcal{K}(H)$ be a two-sided ideal in $\mathcal{B}(H)$ of all compact operators and $x\in\mathcal{K}(H)$. The eigenvalues $\{\lambda_n(|x|)\}_{n=1}^\infty$ of the operator $|x|$ arranged in decreasing order and repeated according to algebraic multiplicity are called singular values of the operator $x$, i.e. $s_n(x)=\lambda_n(|x|), n\in\mathbb{N}$, where $\lambda_1(|x|)\geqslant\lambda_2(|x|)\geqslant\dots$ and $\mathbb{N}$ is the set of all natural numbers. We need the following properties of singular values.

\begin{claim}\label{s-chisla1} (\cite{G-K}, Ch.\setcounter{gl}{2}\Roman{gl} )
a) $s_n(x)=s_n(x^*), s_n(\alpha x)=|\alpha|s_n(x)$ for all $x\in\mathcal{K}(H),\alpha\in\mathbb{C}$;

b) $s_n(xb)\leqslant s_n(x)\|b\|_{\mathcal{B}(H)}, s_n(bx)\leqslant s_n(x)\|b\|_{\mathcal{B}(H)}$ for all $x\in\mathcal{K}(H),b\in\mathcal{B}(H)$.
\end{claim}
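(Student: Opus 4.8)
The plan is to reduce both parts to the single nontrivial ingredient, the Ky Fan approximation-number description of the singular values,
$$s_n(x)=\inf\{\|x-F\|_{\mathcal{B}(H)}\colon F\in\mathcal{B}(H),\ \operatorname{rank}(F)\leqslant n-1\},\qquad x\in\mathcal{K}(H),\quad(\ast)$$
which I would establish first. Using the spectral decomposition $|x|=\sum_k s_k(x)\langle\cdot,e_k\rangle e_k$ of the compact positive operator $|x|$ over an orthonormal system $\{e_k\}$ and the polar decomposition $x=u|x|$ recalled in Section~2, the operator $F_0=\sum_{k=1}^{n-1}s_k(x)\langle\cdot,e_k\rangle e_k$ satisfies $\||x|-F_0\|_{\mathcal{B}(H)}=s_n(x)$, so $F=uF_0$ has rank at most $n-1$ and $\|x-F\|_{\mathcal{B}(H)}=\|u(|x|-F_0)\|_{\mathcal{B}(H)}\leqslant s_n(x)$, whence the infimum in $(\ast)$ does not exceed $s_n(x)$. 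For the reverse bound, given any $F$ of rank at most $n-1$, the restriction of $F$ to the $n$-dimensional subspace $\operatorname{span}\{e_1,\dots,e_n\}$ has range of dimension at most $n-1$ and hence a nontrivial kernel, so there is a unit vector $\xi\in\operatorname{span}\{e_1,\dots,e_n\}$ with $F\xi=0$; since $\|x\xi\|=\||x|\xi\|\geqslant s_n(x)$ for such $\xi$ and $(x-F)\xi=x\xi$, we obtain $\|x-F\|_{\mathcal{B}(H)}\geqslant s_n(x)$. Together these two estimates yield $(\ast)$.

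Granting $(\ast)$, part (a) follows at once. The scaling identity is immediate from the definition: $|\alpha x|=((\alpha x)^*(\alpha x))^{1/2}=(|\alpha|^2 x^*x)^{1/2}=|\alpha|\,|x|$, hence $s_n(\alpha x)=\lambda_n(|\alpha x|)=|\alpha|\lambda_n(|x|)=|\alpha|s_n(x)$. For the identity $s_n(x)=s_n(x^*)$ I would invoke $(\ast)$ together with the two elementary facts that $\|y\|_{\mathcal{B}(H)}=\|y^*\|_{\mathcal{B}(H)}$ and that $F\mapsto F^*$ is a bijection of the set of operators of rank at most $n-1$ onto itself; replacing $F$ by $F^*$ in $(\ast)$ then transforms the expression defining $s_n(x)$ into the one defining $s_n(x^*)$.

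Part (b) is also a direct consequence of $(\ast)$. Given $b\in\mathcal{B}(H)$ and $F$ of rank at most $n-1$, both $Fb$ and $bF$ have rank at most $n-1$, and
$$\|xb-Fb\|_{\mathcal{B}(H)}=\|(x-F)b\|_{\mathcal{B}(H)}\leqslant\|x-F\|_{\mathcal{B}(H)}\|b\|_{\mathcal{B}(H)},\qquad \|bx-bF\|_{\mathcal{B}(H)}=\|b(x-F)\|_{\mathcal{B}(H)}\leqslant\|b\|_{\mathcal{B}(H)}\|x-F\|_{\mathcal{B}(H)};$$
taking the infimum over all such $F$ and applying $(\ast)$ to $xb$ and to $bx$ gives $s_n(xb)\leqslant s_n(x)\|b\|_{\mathcal{B}(H)}$ and $s_n(bx)\leqslant s_n(x)\|b\|_{\mathcal{B}(H)}$. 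The only place demanding genuine work is the characterization $(\ast)$, and within it the lower bound $\|x-F\|_{\mathcal{B}(H)}\geqslant s_n(x)$: this is precisely the min-max principle for the compact positive operator $|x|$, and the dimension-counting that produces a unit vector in $\ker F$ on which $|x|$ acts by at least $s_n(x)$ is the step I expect to require the most care.
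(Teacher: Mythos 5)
Your proof is correct. The paper itself offers no argument for this proposition --- it is simply quoted from Gohberg--Krein, Ch.~II --- so there is nothing internal to compare against; what you have written is essentially the classical proof from that source, resting on the approximation-number characterization $s_n(x)=\inf\{\|x-F\|_{\mathcal{B}(H)}:\operatorname{rank}F\leqslant n-1\}$. Both halves of your derivation of $(\ast)$ are sound: the upper bound via the truncated spectral decomposition of $|x|$ composed with the partial isometry $u$, and the lower bound via the dimension count producing a unit vector $\xi\in\operatorname{span}\{e_1,\dots,e_n\}\cap\ker F$ with $\|(x-F)\xi\|=\|x\xi\|=\||x|\xi\|\geqslant s_n(x)$. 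The deductions of (a) from $|\alpha x|=|\alpha|\,|x|$ and from the rank- and norm-preserving bijection $F\mapsto F^*$, and of (b) from $\operatorname{rank}(Fb),\operatorname{rank}(bF)\leqslant\operatorname{rank}(F)$, are all valid; the only point worth a remark is the degenerate case where $|x|$ has rank below $n$, in which $s_n(x)=0$ and both inequalities in $(\ast)$ are trivial, so no generality is lost.
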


Let $\mathcal{F}(H)$ be a two-sided ideal in $\mathcal{B}(H)$ of
all operators with finite range and let $\mathcal{I}$ be an
arbitrary proper two-sided ideal in $\mathcal{B}(H)$. Then
$\mathcal{I}$ is a $*$-ideal (\cite{K-R1}, Ch.6, \S 8, Proposition
6.8.9) and the following inclusion holds:
$\mathcal{F}(H)\subseteq\mathcal{I}$ (\cite{K-R1}, Ch.6, \S 8,
Theorem 6.8.3), in particular, $\mathcal{I}$ contains all
finite-dimensional projections from $\mathcal{P}(H)$. If $H$ is a
separable Hilbert space, then the converse inclusion
$\mathcal{I}\subseteq\mathcal{K}(H)$ also holds (\cite{Calkin},
Theorem 1.4). If, however, $H$ is not separable, then for proper
two-sided ideals in  $\mathcal{B}(H)$ we have the following
proposition.

\begin{claim}\label{non_separable}(\cite{Gar}, Proposition 1)  (i) $\mathcal{D}=\{x\in\mathcal{B}(H):x(H) \mbox{ is separable }\}$ is a proper two-sided ideal in $\mathcal{B}(H)$, in addition $\mathcal{K}(H)\subset\mathcal{D}$;

(ii) If $\mathcal{I}$ is an ideal in $\mathcal{B}(H)$, then either $\mathcal{I}\subseteq\mathcal{K}(H)$ or $\mathcal{D}\subseteq\mathcal{I}$.
\end{claim}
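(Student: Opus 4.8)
The plan is to establish (i) by a direct topological–algebraic verification and to reserve the real work for the dichotomy (ii). For (i), I would first note that $\mathcal{D}$ is a linear subspace: given $x,y\in\mathcal{D}$ one has $(x+y)(H)\subseteq x(H)+y(H)$, and the algebraic sum of two separable subsets of $H$ is separable (sums of elements of countable dense subsets of $x(H)$ and $y(H)$ form a countable dense set), so the subset $(x+y)(H)$ is separable; homogeneity is immediate. That $\mathcal{D}$ is a two-sided ideal follows from continuity: for $a,b\in\mathcal{B}(H)$ and $x\in\mathcal{D}$ we have $(axb)(H)\subseteq a\bigl(x(H)\bigr)$, and the continuous linear image of a separable set is separable. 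Properness uses that $H$ is non-separable, so $\mathbbm 1(H)=H$ is non-separable and $\mathbbm 1\notin\mathcal{D}$. Finally $\mathcal{K}(H)\subseteq\mathcal{D}$, because for compact $x$ the closure of the image of the unit ball is compact, hence separable, and $x(H)$ is contained in the countable union of the dilates of this separable set.

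For (ii) I would argue by contraposition: assuming $\mathcal{I}\nsubseteq\mathcal{K}(H)$, choose a non-compact $x\in\mathcal{I}$. Using the polar decomposition $x=u|x|$ recalled in the Preliminaries, we get $|x|=u^{*}x\in\mathcal{I}$, and $|x|$ is again non-compact. Since a positive operator all of whose spectral projections $\chi_{[\delta,\infty)}(|x|)$ are finite-dimensional must be compact, there is $\delta>0$ for which $e:=\chi_{[\delta,\infty)}(|x|)$ is an infinite-dimensional projection. Writing $e=g(|x|)\,|x|$ with the bounded Borel function $g(t)=t^{-1}\chi_{[\delta,\infty)}(t)$, so that $g(|x|)\in\mathcal{B}(H)$, the ideal property forces $e\in\mathcal{I}$. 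Passing to the projection $q\le e$ onto the closed span of a countable orthonormal system in $e(H)$ produces an infinite-dimensional projection with separable range, and $q=qe\in\mathcal{I}$.

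It then remains to deduce $\mathcal{D}\subseteq\mathcal{I}$ from the single projection $q\in\mathcal{I}$, which I expect to be the crux of the argument. Given $x\in\mathcal{D}$, let $q_x$ be the projection onto the separable subspace $\overline{x(H)}$, so $x=q_xx$ and it suffices to show $q_x\in\mathcal{I}$. Here I would invoke Murray--von Neumann comparison of projections in $\mathcal{B}(H)$: if $\dim\overline{x(H)}=\aleph_0$ then $q_x$ and $q$ are equivalent, giving a partial isometry $v$ with $v^{*}v=q$, $vv^{*}=q_x$, whence $q_x=vqv^{*}\in\mathcal{I}$; if $\overline{x(H)}$ is finite-dimensional then $q_x\preceq q$ gives a partial isometry $w$ with $w^{*}w=q_x$, $ww^{*}\le q$, and the identity $qw=w$ yields $q_x=w^{*}qw\in\mathcal{I}$. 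In either case $x=q_xx\in\mathcal{I}$, so $\mathcal{D}\subseteq\mathcal{I}$. The principal technical points requiring care are the functional-calculus identity $e=g(|x|)|x|$ that transports the spectral projection into $\mathcal{I}$, and the comparison-of-projections bookkeeping; everything else is routine.
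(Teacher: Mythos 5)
Your proof is correct. Note that the paper does not actually prove this proposition: it is quoted verbatim from Garling (\cite{Gar}, Proposition 1), so there is no in-paper argument to compare yours against. Your part (i) is the routine verification one would expect, and your part (ii) supplies the two genuinely needed ingredients in a sound way: first, extracting from a non-compact $x\in\mathcal{I}$ an infinite-dimensional spectral projection $e=\chi_{[\delta,\infty)}(|x|)=g(|x|)\,|x|\in\mathcal{I}$ (the identity $g(t)t=\chi_{[\delta,\infty)}(t)$ holds for all $t\geqslant 0$, including $t=0$, so the functional-calculus step is legitimate), and second, using Murray--von Neumann equivalence of projections with ranges of equal Hilbert dimension to propagate membership in $\mathcal{I}$ from one separable-range infinite-dimensional projection $q$ to every projection $q_x$ onto $\overline{x(H)}$, $x\in\mathcal{D}$, whence $x=q_xx\in\mathcal{I}$. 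The only cosmetic omission is that the strictness of the inclusion $\mathcal{K}(H)\subset\mathcal{D}$ (witnessed by any projection onto an infinite-dimensional separable subspace) is not spelled out, but this is immediate from the objects you already construct.
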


Let $X$ be a linear space over the field $\mathbb{C}$. A function
$\|\cdot\|$ from $X$ to $\mathbb{R}$ is a quasi-norm, if for all
$x,y\in X,\alpha \in\mathbb{C}$ the following properties hold:

1) $\|x\|\geqslant 0, \|x\|=0 \Leftrightarrow x=0$;

2) $\|\alpha x\|=|\alpha|\|x\|$;

3) $\|x+y\|\leqslant C(\|x\|+\|y\|), C\geqslant 1$;

The couple $(X,\|\cdot\|)$ is a quasi-normed space and the least
of all constants $C$  satisfying the  inequality 3) above is
called the modulus of concavity of the quasi-norm $\|\cdot\|$.

It is known (see e.g. \cite{Kalton}, \S 1) that for each
quasi-norm $\|\cdot\|$ on $X$ there exists an equivalent
$p$-additive quasi-norm $\interleave\cdot\interleave$, that is a
quasi-norm  $\interleave\cdot\interleave$ on $X$ satisfying the
following property of $p$-additivity: $\interleave
x+y\interleave^p\leqslant\interleave x\interleave^p+\interleave
y\interleave^p$, where $p$ is such that $C=2^{\frac{1}{p}-1}$, in
particular, $0<p\leqslant 1$ since $C\geqslant 1$. In this case,
the function $d: X^2\to\mathbb{R}$ defined by $d(x,y):=\interleave
x-y\interleave^p$, $x,y\in X$ is an invariant metric on $X$, and
in the topology $\tau_d$, generated by the metric $d$, the linear
space $X$ is a topological vector space. If $(X,d)$ is a complete
metric space, then $(X,\|\cdot\|)$ is called a quasi-Banach space
and the quasi-norm $\|\cdot\|$ is a complete quasi-norm; in this
case, $(X,\tau_d)$ is an $F$-space.

\begin{claim}\label{qb} Let $(X, \|\cdot\|)$ be a quasi-Banach space with the modulus of concavity $C$, let
$\interleave\cdot\interleave$ be a $p$-additive quasi-norm
equivalent to the quasi-norm $\|\cdot\|, C=2^{\frac{1}{p}-1}$. If
$x_n\in X$, $n\ge 1$ and $\sum\limits_{n=1}^\infty\interleave x_n
\interleave^p<\infty$, then  the series $\sum\limits_{n=1}^\infty
x_n$  converges in $(X, \|\cdot\|)$, i.e. there exists  $x\in X$
such that $\bigl\|x-\sum\limits_{n=1}^k x_n\bigl\|\rightarrow 0$
for $k\rightarrow \infty.$
\end{claim}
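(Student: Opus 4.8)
The plan is to show that the sequence of partial sums $S_k=\sum_{n=1}^k x_n$ is Cauchy with respect to the invariant metric $d(x,y):=\interleave x-y\interleave^p$, and then to invoke completeness of $(X,d)$, which holds precisely because $(X,\|\cdot\|)$ is a quasi-Banach space. At the very end the equivalence of $\interleave\cdot\interleave$ and $\|\cdot\|$ transfers the resulting convergence back to the original quasi-norm.

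First I would record the finite-sum form of $p$-additivity. Iterating the defining inequality $\interleave u+v\interleave^p\leqslant\interleave u\interleave^p+\interleave v\interleave^p$ yields, for any finite family $y_1,\dots,y_N\in X$,
$$\interleave \sum_{i=1}^N y_i\interleave^p\leqslant\sum_{i=1}^N\interleave y_i\interleave^p,$$
which follows by a routine induction on $N$. Next, for $m>k$ I would apply this to the block $x_{k+1},\dots,x_m$ to get
$$d(S_m,S_k)=\interleave S_m-S_k\interleave^p=\interleave\sum_{n=k+1}^m x_n\interleave^p\leqslant\sum_{n=k+1}^m\interleave x_n\interleave^p\leqslant\sum_{n=k+1}^\infty\interleave x_n\interleave^p.$$
Since the numerical series $\sum_{n=1}^\infty\interleave x_n\interleave^p$ converges by hypothesis, its tail $\sum_{n=k+1}^\infty\interleave x_n\interleave^p$ tends to $0$ as $k\to\infty$; crucially, this bound on $d(S_m,S_k)$ is uniform in $m>k$, so it upgrades ``termwise smallness'' to genuine $d$-Cauchiness of the sequence $(S_k)$.

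Finally I would use completeness together with equivalence of the two quasi-norms. By completeness of $(X,d)$ there exists $x\in X$ with $d(S_k,x)=\interleave x-S_k\interleave^p\to 0$, equivalently $\interleave x-S_k\interleave\to 0$. As $\interleave\cdot\interleave$ and $\|\cdot\|$ are equivalent, this gives $\|x-S_k\|\to 0$, which is exactly the claimed convergence of $\sum_{n=1}^\infty x_n$ in $(X,\|\cdot\|)$. The argument is essentially routine; the only points demanding any attention are the passage to the finite-sum version of $p$-additivity and the observation that the tail estimate is uniform in $m$, since it is this uniformity — rather than mere convergence of individual terms — that delivers the Cauchy property.
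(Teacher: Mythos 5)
Your proposal is correct and follows essentially the same route as the paper's proof: both establish that the partial sums form a Cauchy sequence in the invariant metric $d$ via the finite-sum form of $p$-additivity, then invoke completeness of $(X,d)$ and the equivalence of the two quasi-norms. Your write-up is, if anything, slightly more careful in spelling out the induction behind the finite-sum inequality and the uniformity of the tail estimate.
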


\begin{proof} For partial sums $S_k=\sum\limits_{n=1}^k x_n$ we have
$$d(S_{k+l},S_k)=\interleave S_{k+l}-S_k\interleave^p=\interleave\sum\limits_{n=l+1}^{k+l} x_n\interleave^p\leqslant \sum\limits_{n=l+1}^{k+l} \interleave x_n\interleave^p\rightarrow 0 \mbox{ for } k,l\rightarrow \infty,$$
 i.e. $\{S_k\}_{k=1}^\infty$ is a Cauchy sequence in $(X,d)$. Since the metric space $(X,d)$ is complete, there exists $x\in X$ such that $d(S_k,x)=\interleave S_k-x\interleave^p\rightarrow 0$ for $k\rightarrow \infty$. Since quasi-norms $\|\cdot\|$ and $\interleave\cdot\interleave$ are equivalent we have that $\|S_k-x\|\rightarrow 0$ for $k\rightarrow \infty$.
\end{proof}

Let $(X,\|\cdot\|_X),(Y,\|\cdot\|_Y)$ be quasi-normed spaces and
$\mathcal{B}(X,Y)$ be the linear space of all bounded linear
mappings $T:X\to Y$. For each $T\in\mathcal{B}(X,Y)$ set
$\|T\|_{\mathcal{B}(X,Y)}=\sup\{\|Tx\|_Y: \|x\|\leqslant 1\}$. As
in the case of normed spaces, the set  $\mathcal{B}(X,Y)$
coincides with the set of all continuous linear mappings from $X$
to $Y$, moreover, the function
$\|\cdot\|_{\mathcal{B}(X,Y)}\colon\mathcal{B}(X,Y)\to\mathbb{R}$
is a quasi-norm on  $\mathcal{B}(X,Y)$ whose modulus of concavity,
 does not exceed the modulus of concavity of the quasi-norm
$\|\cdot\|_Y$. Furthermore,
$\|Tx\|_Y\leqslant\|T\|_{\mathcal{B}(X,Y)}\|x\|_X$ for all
$T\in\mathcal{B}(X,Y)$ and $x\in X$.

\begin{claim} If  $(Y,\|\cdot\|_Y)$ is a quasi-Banach space, then
$(\mathcal{B}(X,Y), \|\cdot\|_{\mathcal{B}(X,Y)})$ is a
quasi-Banach space too. \end{claim}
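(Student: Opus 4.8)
The plan is to run the classical argument establishing completeness of a space of bounded operators, adapting each step to the quasi-normed setting. The one genuinely new feature compared with the Banach case is that a general quasi-norm need not be continuous, so I would first replace $\|\cdot\|_Y$ by an equivalent $p$-additive quasi-norm $\interleave\cdot\interleave_Y$ (which exists by the Aoki--Rolewicz result quoted just before Proposition \ref{qb}). Since $p$-additivity gives $|\interleave x\interleave_Y^p-\interleave y\interleave_Y^p|\le\interleave x-y\interleave_Y^p$, the map $y\mapsto\interleave y\interleave_Y^p$ is $1$-Lipschitz for the metric $d$, so $y\mapsto\interleave y\interleave_Y$ is continuous. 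The induced operator quasi-norm $\interleave T\interleave:=\sup\{\interleave Tx\interleave_Y:\|x\|_X\le 1\}$ is then equivalent to $\|\cdot\|_{\mathcal{B}(X,Y)}$, and it therefore suffices to prove completeness for $\interleave\cdot\interleave$.

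Given a Cauchy sequence $\{T_n\}$ in $(\mathcal{B}(X,Y),\interleave\cdot\interleave)$, the inequality $\interleave T_nx-T_mx\interleave_Y\le\interleave T_n-T_m\interleave\,\|x\|_X$ shows that for each fixed $x\in X$ the sequence $\{T_nx\}$ is Cauchy in $Y$; since $Y$ is complete I would define $Tx:=\lim_n T_nx$. Linearity of $T$ is immediate from linearity of the $T_n$ together with the continuity of addition and scalar multiplication in the topological vector space $Y$. Boundedness follows because a Cauchy sequence is bounded, say $\interleave T_n\interleave\le M$ for all $n$, whence $\interleave Tx\interleave_Y=\lim_n\interleave T_nx\interleave_Y\le M\|x\|_X$ by continuity of the quasi-norm; thus $T\in\mathcal{B}(X,Y)$.

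It remains to show $\interleave T_n-T\interleave\to 0$. Fixing $\varepsilon>0$ and $N$ with $\interleave T_n-T_m\interleave<\varepsilon$ for all $n,m\ge N$, for each $x$ with $\|x\|_X\le 1$ and each $n\ge N$ I have $\interleave T_nx-T_mx\interleave_Y<\varepsilon$ for all $m\ge N$; letting $m\to\infty$ and invoking the continuity of $\interleave\cdot\interleave_Y$ gives $\interleave T_nx-Tx\interleave_Y\le\varepsilon$, and taking the supremum over the unit ball yields $\interleave T_n-T\interleave\le\varepsilon$. Hence $T_n\to T$, and completeness in $\interleave\cdot\interleave$---equivalently in $\|\cdot\|_{\mathcal{B}(X,Y)}$---follows.

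I do not expect a serious obstacle here: the argument is routine once the possible non-continuity of the quasi-norm is dealt with, which is precisely why passing to the $p$-additive representative is the key preparatory step. An alternative, perhaps closer to the spirit of the preliminaries, is to verify instead that every series $\sum_n T_n$ with $\sum_n\interleave T_n\interleave^p<\infty$ converges in $\mathcal{B}(X,Y)$: for each $x$ one has $\sum_n\interleave T_nx\interleave_Y^p\le\|x\|_X^p\sum_n\interleave T_n\interleave^p<\infty$, so $Tx:=\sum_n T_nx$ exists by Proposition \ref{qb}, while the bound $\interleave(T-\sum_{n\le N}T_n)x\interleave_Y^p\le\|x\|_X^p\sum_{n>N}\interleave T_n\interleave^p$ gives convergence of the partial sums in operator quasi-norm; a standard subsequence and telescoping argument then upgrades this to completeness.
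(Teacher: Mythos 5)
Your argument is correct and follows essentially the same route as the paper's: pass to an equivalent $p$-additive quasi-norm, extract pointwise limits of a Cauchy sequence of operators using completeness of $Y$, and verify that the limit operator is bounded and is the limit in the operator quasi-norm. The paper declares the final verification ``routine'' and omits it, whereas you supply exactly those details (continuity of the $p$-additive quasi-norm, boundedness of Cauchy sequences, the let-$m\to\infty$ step); your only deviation is inducing the operator quasi-norm directly from $\interleave\cdot\interleave_Y$ instead of invoking the Aoki--Rolewicz renorming a second time on $\mathcal{B}(X,Y)$, which is a harmless simplification.
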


\begin{proof} Since $\|\cdot\|_Y$ is a quasi-norm on $Y$, there  exists a $p$-additive quasi-norm
$\interleave\cdot\interleave_Y$ equivalent to $\|\cdot\|_Y$, i.e.
$\alpha_1\interleave y \interleave_Y\leqslant
\|y\|_Y\leqslant\beta_1\interleave y \interleave_Y$ for all $y\in
Y$ and some constants $\alpha_1,\beta_1>0$. Similarly, there
exists a $q$-additive quasi-norm
$\interleave\cdot\interleave_{\mathcal{B}(X,Y)}$ equivalent to the
quasi-norm $\|\cdot\|_{\mathcal{B}(X,Y)}$, i.e.
$\alpha_2\interleave T \interleave_{\mathcal{B}(X,Y)}\leqslant
\|T\|_{\mathcal{B}(X,Y)}\leqslant\beta_2\interleave T
\interleave_{\mathcal{B}(X,Y)}$ for all $T\in\mathcal{B}(X,Y)$ and
some $\alpha_2,\beta_2>0, 0<p,q\leqslant 1$.

Let $\{T_n\}_{n=1}^\infty$ be a Cauchy sequence in  $(\mathcal{B}(X,Y),d)$, where
$d(T,S)=\interleave T-S\interleave^q_{\mathcal{B}(X,Y)}, T,S\in\mathcal{B}(X,Y)$.
Fix $\varepsilon>0$ and select a positive integer $n(\varepsilon)$ such that
$\interleave T_n-T_m\interleave^q_{\mathcal{B}(X,Y)}<\varepsilon^q$ for all
$n,m\geqslant n(\varepsilon)$. For every $x\in X$ we have
\begin{multline*}
\interleave T_nx-T_mx\interleave_Y^p\leqslant\frac{1}{\alpha_1^p}
\|T_nx-T_mx\|_Y^p\leqslant\frac{1}{\alpha_1^p} \|T_n-T_m\|_{\mathcal{B}(X,Y)}^p\|x\|_X^p\leqslant
\\\leqslant\left(\frac{\beta_2}{\alpha_1} \right)^p\interleave T_n-T_m\interleave_{\mathcal{B}(X,Y)}^p\|x\|_X^p
<  \left(\frac{\beta_2}{\alpha_1} \right)^p\|x\|_X^p\varepsilon^p \mbox{ for } n,m\geqslant n(\varepsilon).
\end{multline*}
Thus, $\{T_nx\}_{n=1}^\infty$ is a Cauchy sequence in $(Y,d_Y)$,
where $d_Y(x,y)=\interleave x-y\interleave^p_Y$. Since the metric
space $(Y,d_Y)$ is complete, there exists $ T(x)\in Y$ such that
$\interleave T_n(x)-T(x)\interleave_Y^p\rightarrow 0$ for
$n\rightarrow\infty$. The verification that $T\in\mathcal{B}(X,Y)$
and $\interleave T_n-T\interleave^q_{\mathcal{B}(X,Y)}\rightarrow
0$ for $n\rightarrow\infty$ is routine and is therefore omitted.
\end{proof}

Let $\mathcal{I}$ be a nonzero two-sided ideal in $\mathcal{B}(H)$.

A quasi-norm $\|\cdot\|_{\mathcal{I}}:\mathcal{I}\to\mathbb{R}$ is called symmetric quasi-norm if

1) $\|axb\|_{\mathcal{I}}\leqslant \|a\|_{\mathcal{B}(H)}\|x\|_{\mathcal{I}}\|b\|_{\mathcal{B}(H)}$ for all $x\in\mathcal{I}, a,b\in\mathcal{B}(H)$;

2) $\|p\|_{\mathcal{I}}=1$ for any one-dimensional projection $p\in\mathcal{I}$.

\begin{claim}\label{sim qn} Let $\|\cdot\|_{\mathcal{I}}$ be a symmetric quasi-norm on two-sided ideal $\mathcal{I}$. Then

a) $\|x\|_{\mathcal{I}}=\|x^*\|_{\mathcal{I}}=\bigl\| |x|\bigl\|_{\mathcal{I}}$ for all $x\in\mathcal{I}$;

b) If $x\in\mathcal{I}\subset\mathcal{K}(H), y\in\mathcal{K}(H), s_n(y)\leqslant s_n(x), n=1,2,\dots$, then $y\in\mathcal{I}$ and $\|y\|_{\mathcal{I}}\leqslant\|x\|_{\mathcal{I}}$;

c) If $\mathcal{I}\subset\mathcal{K}(H)$, then  $\|x\|_{\mathcal{B}(H)}\leqslant\|x\|_{\mathcal{I}}$ for all $x\in\mathcal{I}$.
\end{claim}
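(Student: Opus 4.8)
The plan is to establish the three parts in the order a), b), c), deriving c) from b) and reducing b) to the positive case by means of a). The engine for b) will be the factorization supplied by Proposition~\ref{pr}.

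For a) I would use the polar decomposition $x=u|x|$ recorded in the preliminaries, where $u$ is a partial isometry with $u^*u$ the right support of $x$, so that also $|x|=u^*x$. Applying property 1) of a symmetric quasi-norm twice gives $\||x|\|_{\mathcal I}=\|u^*x\|_{\mathcal I}\le\|x\|_{\mathcal I}$ and $\|x\|_{\mathcal I}=\|u|x|\|_{\mathcal I}\le\||x|\|_{\mathcal I}$, whence $\|x\|_{\mathcal I}=\||x|\|_{\mathcal I}$. Writing $x^*=|x|u^*$ and invoking the identity just proved, the same estimate yields $\|x^*\|_{\mathcal I}\le\|x\|_{\mathcal I}$ for every $x\in\mathcal I$; applying this to both $x$ and $x^*$ forces the two quantities to coincide.

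For b), by a) together with $s_n(y)=\lambda_n(|y|)$ and $s_n(x)=\lambda_n(|x|)$ (Proposition~\ref{s-chisla1}) it suffices to treat $|y|,|x|$, noting $|x|=u^*x\in\mathcal I$. So I would assume $y,x\ge 0$ are compact with $\lambda_n(y)\le\lambda_n(x)$ for all $n$, and fix orthonormal eigenvectors $\{f_n\}$ of $y$ and $\{e_n\}$ of $x$ for the nonzero eigenvalues. Since $\lambda_n(x)\ge\lambda_n(y)$, the nonzero part of $x$ has at least as many eigenvalues (counted with multiplicity) as that of $y$, so a partial isometry $w$ with $wf_n=e_n$ and initial space $(\ker y)^\perp$ is well defined, $\|w\|_{\mathcal B(H)}\le 1$, and $w^*w$ is the support projection of $y$. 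Then $\tilde y:=wyw^*=\sum_n\lambda_n(y)\langle\cdot,e_n\rangle e_n$ is diagonal in the eigenbasis of $x$, and $\lambda_n(y)\le\lambda_n(x)$ gives the operator inequality $\tilde y\le x$. Proposition~\ref{pr} now produces $b\in\mathcal B(H)$ with $\|b\|_{\mathcal B(H)}\le 1$ and $\tilde y=b^*xb$, so that $y=w^*\tilde y w=(bw)^*x(bw)$. Setting $a:=bw$ yields $\|a\|_{\mathcal B(H)}\le 1$ and $y=a^*xa$; since $\mathcal I$ is a two-sided ideal containing $x$, this gives $y\in\mathcal I$, and property 1) gives $\|y\|_{\mathcal I}=\|a^*xa\|_{\mathcal I}\le\|x\|_{\mathcal I}$. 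Transporting back through a) disposes of general $x,y$. I expect this factorization step---converting eigenvalue domination into the operator inequality $\tilde y\le x$ via the auxiliary partial isometry so that Proposition~\ref{pr} becomes applicable---to be the main obstacle, the remainder being bookkeeping about supports and multiplicities.

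For c), let $x\in\mathcal I\subset\mathcal K(H)$ and let $p$ be the rank-one projection onto a top eigenvector of $|x|$, so that $\|p\|_{\mathcal I}=1$ by property 2). The operator $s_1(x)p$ has singular values $s_1(x),0,0,\dots$, hence $s_n(s_1(x)p)\le s_n(x)$ for all $n$; part b) then gives $s_1(x)=\|s_1(x)p\|_{\mathcal I}\le\|x\|_{\mathcal I}$, and since $\|x\|_{\mathcal B(H)}=s_1(x)$ we conclude $\|x\|_{\mathcal B(H)}\le\|x\|_{\mathcal I}$.
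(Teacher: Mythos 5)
Your proof is correct and follows essentially the same route as the paper: polar decomposition for a), an isometric alignment of eigenbases followed by Proposition~\ref{pr} for b), and a rank-one comparison for c). The only cosmetic difference is in b), where you conjugate $y$ into the eigenbasis of $x$ by a partial isometry, whereas the paper conjugates $|x|$ by a unitary onto the eigenbasis of $|y|$ — the two are mirror images of the same argument.
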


\begin{proof}
a)  Let $x=u|x|$ be the polar decomposition of the operator $x$.
Then $\|x\|_{\mathcal{I}}= \| u|x|\|_{\mathcal{I}}\leqslant
\bigl\| |x|\bigl\|_{\mathcal{I}}$. Since $u^*x=|x|$, the
inequality $\bigl\||x|\bigl\|_{\mathcal{I}}\leqslant
\|x\|_{\mathcal{I}}$ holds and so
$\bigl\||x|\bigl\|_{\mathcal{I}}=\|x\|_{\mathcal{I}}$. Using the
equalities $x^*=|x|u^*, x^*u=|x|$ in the same manner, we obtain
that $\bigl\||x|\bigl\|_{\mathcal{I}}=\|x^*\|_{\mathcal{I}}$.

b) Since $x,y$ are compact operators and $s_n(y)\leqslant s_n(x)$
we have $s_n(y)=\alpha_n s_n(x)$, where
$0\leqslant\alpha_n\leqslant 1, n\in\mathbb{N}$. By
Hilbert-Schmidt theorem, there exists an orthogonal system of
eigenvectors $\{\varphi_n\}_{n=1}^\infty$ for the operator $|y|$
such that $|y|(\varphi)=\sum\limits_{n=1}^\infty
s_n(y)c_n\varphi_n$, where $c_n=(\varphi,\varphi_n),\varphi\in H$.
Since $s_n(y)=\alpha_n s_n(x)$, it follows that
$\mathrm{card}\{\varphi_n\}\leqslant\mathrm{card}\{\psi_n\}$,
where $\{\psi_n\}_{n=1}^\infty$ is an orthogonal system of
eigenvectors for the operator $|x|$. Thus, there exists a unitary
operator $u\in\mathcal{B}(H)$ such that $u(\psi_n)=\varphi_n$, in
addition, $u|x|u^{-1}\geqslant|y|$.

By Proposition \ref{pr}, there exists an operator
$a\in\mathcal{B}(H)$ with $\|a\|_{\mathcal{B}(H)}\leqslant 1$ such
that $|y|=a^*u|x|u^{-1}a$. Consequently, $|y|\in\mathcal{I}$ and
$\bigl\| |y| \bigl\|_{\mathcal{I}}\leqslant \bigl\| |x|
\bigl\|_{\mathcal{I}}$, thus $y\in\mathcal{I}$ and
$\|y\|_{\mathcal{I}}\leqslant\|x\|_{\mathcal{I}}$.

c) Let $y(\cdot)=s_1(x)(\cdot,\varphi)\varphi$, where $\varphi$ is an arbitrary vector in $H$ with $\|\varphi\|_H=1$. Whereas $s_n(y)\leqslant s_n(x)$, we have $\|x\|_{\mathcal{B}(H)}=s_1(x)=\|y\|_{\mathcal{B}(H)}=\|y\|_{\mathcal{I}}\leqslant \|x\|_{\mathcal{I}}$ (see b)).
\end{proof}

A two-sided ideal $\mathcal{I}$ of compact operators from
$\mathcal{B}(H)$ is called a symmetric quasi-Banach (respectively,
Banach) ideal, if  $\mathcal{I}$ is equipped with a complete
symmetric quasi-norm (respectively, norm).

Let $\mathcal{I,J}$ be two-sided ideals of compact operators from
$\mathcal{B}(H)$. A linear mapping
$\delta:\mathcal{I}\to\mathcal{J}$ is called a derivation, if
$\delta(xy)=\delta(x)y+x\delta(y)$ for all $x,y\in\mathcal{I}$.
If, in addition, $\delta(x^*)=(\delta(x))^*$ for all
$x\in\mathcal{I}$, then $\delta$ is called a $*$-derivation.

For each derivation $\delta:\mathcal{I}\to\mathcal{J}$ define
mappings
$\delta_\mathrm{Re}(x):=\frac{\delta(x)+\delta(x^*)^*}{2}$ and
$\delta_\mathrm{Im}(x):=\frac{\delta(x)-\delta(x^*)^*}{2i},
x\in\mathcal{I}$. It is easy to see that $\delta_\mathrm{Re}$ and
$\delta_\mathrm{Im}$ are *-derivations from $\mathcal{I}$ to
$\mathcal{J}$, moreover
$\delta=\delta_\mathrm{Re}+i\delta_\mathrm{Im}$.

If $a\in \mathcal{B}$(H), then the mapping $\delta
_{a}:\mathcal{B}(H)\to\mathcal{B}(H)$ given by $\delta _{a}( x):
=[a,x]=ax-xa$, $x\in \mathcal{B}(H)$, is a derivation. Derivations
of this type are called inner. When $\mathcal{I}$ is a two-sided
ideal in $\mathcal{B}(H)$, then $\delta_a(\mathcal{I})\subset
\mathcal{I}$ for  all $a\in\mathcal{B}(H)$. If $\mathcal{J}$ is
also a two-sided ideal in $\mathcal{B}(H)$ and $a\in\mathcal{J}$,
then $\delta_a(\mathcal{I})\subset \mathcal{I\cap J}.$

\section{The set Der$(\mathcal{I},\mathcal{J})$ for symmetric quasi-Banach ideals $\mathcal{I}$ and
$\mathcal{J}$}
The following theorem gives a positive answer to question
\ref{q3}.
\begin{theorem}\label{th6}
Let $\mathcal{I, J}$ be symmetric quasi-Banach ideals of compact operators from
$\mathcal{B}(H)$ and $\delta
$ is a derivation from $\mathcal{I}$ to $\mathcal{J}$. Then $\delta$ is a continuous mapping from $\mathcal{I}$ to $\mathcal{J}$, i.e. $\delta\in\mathcal{B(I,J)}$.
\end{theorem}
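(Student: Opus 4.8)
The plan is to prove continuity via the closed graph theorem, which is available because both $\mathcal{I}$ and $\mathcal{J}$ are quasi-Banach (hence $F$-spaces in the metrics induced by their $p$-additive quasi-norms, as set up in the Preliminaries). So the real content is to show that $\delta$ has a closed graph: if $x_n\to 0$ in $\mathcal{I}$ and $\delta(x_n)\to y$ in $\mathcal{J}$, then $y=0$. Since $\delta=\delta_{\mathrm{Re}}+i\delta_{\mathrm{Im}}$ decomposes into $*$-derivations, I would first reduce to the case of a $*$-derivation, so that I may test the graph-closedness on self-adjoint elements and exploit positivity.

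The key mechanism is the standard localization trick for derivations using finite-rank projections. For a projection $p\in\mathcal{P}(H)$ with $p\in\mathcal{I}$ (every ideal contains all finite-dimensional projections, by the Preliminaries), differentiating $p=p^2$ gives $\delta(p)=\delta(p)p+p\delta(p)$, and multiplying by $p$ on both sides yields $p\,\delta(p)\,p=2\,p\,\delta(p)\,p$, hence $p\,\delta(p)\,p=0$. More usefully, for any $x\in\mathcal{I}$ one has the \emph{peripheral} identity obtained from $pxp$: writing $\delta(pxp)=\delta(p)xp+p\delta(x)p+px\delta(p)$, one controls the ``corner'' $p\delta(x)p$ by the behavior of $\delta$ on the finite-dimensional algebra $p\mathcal{B}(H)p$. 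On that finite-dimensional corner $\delta$ restricts to a derivation of a finite matrix algebra, which is automatically inner and bounded; the point is to use these corners to pin down $y$. Concretely, for the graph-closedness argument I would compress: if $x_n\to0$ and $\delta(x_n)\to y$, then for each fixed finite-rank projection $p$ the compressions $p x_n p\to0$ while $\delta$ restricted to the finite-dimensional $*$-algebra $p\mathcal{B}(H)p\cap\mathcal{I}$ is continuous (finite dimension), which forces $p\,y\,p$ to be determined by $\lim_n p\,\delta(x_n)\,p$ computed through the finite-rank derivation — and this limit must be $0$. Since $p$ ranges over an increasing sequence of finite-rank projections with $p\to\mathbbm{1}$ strongly, and since $pyp\to y$ in an appropriate sense, I conclude $y=0$.

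The main obstacle is precisely this last convergence step: passing from $p\,y\,p=0$ for all finite-rank $p$ to $y=0$ requires that compression by finite-rank projections converges to the identity in the quasi-norm topology of $\mathcal{J}$, or at least strongly enough to separate points. For a compact operator $y\in\mathcal{J}$, if $\{p_k\}$ is the sequence of spectral projections of $|y|$ onto the top $k$ singular directions, then $p_k y p_k\to y$ uniformly (since $y$ is compact) and hence, by Lemma 2.9(c), the uniform norm is dominated by the $\mathcal{J}$-quasi-norm, so $p_k y p_k\to y$ also in $\mathcal{J}$. Thus showing each $p_k y p_k=0$ genuinely forces $y=0$. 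The delicate bookkeeping is to make the corner computation uniform: one must ensure that the finite-rank derivation on $p_k\mathcal{B}(H)p_k$ is implemented by a fixed $a_k\in p_k\mathcal{B}(H)p_k$ whose norm is controlled independently enough that $p_k\,\delta(x_n)\,p_k = [a_k, p_k x_n p_k] + (\text{boundary terms involving }\delta(p_k))$ stays manageable, so that letting $n\to\infty$ first (using $p_k x_n p_k\to0$ in finite dimension) kills the commutator term and forces $p_k y p_k=0$. Handling the boundary terms $\delta(p_k)$ cleanly — showing they do not obstruct the limit — is where I expect the argument to require the most care, and I would isolate it as the crux of the proof.
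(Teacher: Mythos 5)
Your proposal is sound in substance and shares its skeleton with the paper's proof: the closed graph theorem for $F$-spaces, the Leibniz identity $\delta(px_np)=\delta(p)x_np+p\delta(x_n)p+px_n\delta(p)$, and the boundary estimate $\|\delta(p)x_np\|_{\mathcal{J}}\leqslant\|\delta(p)\|_{\mathcal{J}}\|x_n\|_{\mathcal{B}(H)}\leqslant\|\delta(p)\|_{\mathcal{J}}\|x_n\|_{\mathcal{I}}$ (Proposition \ref{sim qn} c)). Where you diverge is in how the conclusion is extracted. The paper first reduces to self-adjoint $x_n$, so that the limit $x$ is self-adjoint, writes $x=x_+-x_-$ and picks a single \emph{rank-one} projection $p$ in the top eigenspace of $x_+$, for which $pxp=\lambda p\neq 0$; rank one makes the corner step trivial, since $px_np=\alpha_np$ with $\alpha_n\to 0$ forces $\delta(px_np)=\alpha_n\delta(p)\to 0$ by homogeneity alone, against $\delta(px_np)\to pxp\neq 0$. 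You instead prove $pyp=0$ for \emph{every} finite-rank projection $p$ and then conclude $y=0$; this dispenses with the self-adjoint reduction and the spectral analysis of the limit. For the corner step you need much less than you fear: innerness of the compressed derivation and uniform control of implementing elements $a_k$ are unnecessary, because a linear map from the finite-dimensional subspace $p\mathcal{B}(H)p\subset\mathcal{I}$ into the topological vector space $(\mathcal{J},\tau_d)$ is automatically continuous, so $\delta(px_np)\to 0$ for each fixed $p$, and $pyp=0$ is an exact identity for each $p$ separately; no uniformity in $p$ is ever needed.

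The one step that fails as written is the last one. For $p_k$ the spectral projection of $|y|$ onto the top $k$ singular directions it is \emph{not} true that $p_kyp_k\to y$: take $y=(\cdot,e_1)e_2$, so that $|y|=(\cdot,e_1)e_1$ and every $p_k$ is the projection onto $\mathbb{C}e_1$, giving $p_kyp_k=0\neq y$. The spectral projections of $|y|$ capture the initial space of $y$ but not its range. Two immediate repairs: (a) since $pyp=0$ holds for all finite-rank $p$, apply it with $p$ the projection onto $\mathrm{span}\{\varphi,\psi\}$ for arbitrary unit vectors $\varphi,\psi$ to get $(y\psi,\varphi)=(pyp\,\psi,\varphi)=0$, hence $y=0$; or (b) use any increasing sequence of finite-rank projections $p_k\to\mathbbm{1}$ strongly, for which compactness of $y$ does give $\|p_kyp_k-y\|_{\mathcal{B}(H)}\to 0$, so that $y$ is the uniform limit of the zero sequence. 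In either case no convergence in $\|\cdot\|_{\mathcal{J}}$ is required; note also that your appeal to Proposition \ref{sim qn} c) for that purpose runs in the wrong direction, since it lets you pass from $\mathcal{J}$-convergence to uniform convergence, not conversely. With this single correction your argument is complete.
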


\begin{proof} Without loss of generality we may assume that  $\delta$ is a $*$-derivation.
The spaces
$(\mathcal{I},\|\cdot\|_{\mathcal{I}}),
(\mathcal{J},\|\cdot\|_{\mathcal{J}})$ are  $F$-spaces,  and
therefore it is sufficient to prove that the graph of $\delta$ is
closed. Suppose a contrary, that is there exists a sequence
$\{x_n\}_{n=1}^\infty\subset \mathcal{I}$ such that
$\|\cdot\|_{\mathcal{I}}-\lim\limits_{n\rightarrow\infty}x_n=0$
and
$\|\cdot\|_{\mathcal{J}}-\lim\limits_{n\rightarrow\infty}\delta(x_n)=x\neq
0$.

Since $x_n=\mathrm{Re}x_n+i\mathrm{Im}x_n$ for all $n\in\mathbb{N}$, where
$\mathrm{Re}x_n=\frac{x_n+x_n^*}{2}, \mathrm{Im}x_n=\frac{x_n-x_n^*}{2},$ and
$\|x_n\|_\mathcal{I}\rightarrow 0, \|x_n^*\|_\mathcal{I}=\|x_n\|_\mathcal{I}\rightarrow 0$, we have
$$\|\mathrm{Re}x_n\|_\mathcal{I}=\left\| \frac{x_n+x_n^*}{2}\right\|_\mathcal{I}\leqslant\frac{C(\|x_n\|_\mathcal{I}+\|x_n^*\|_\mathcal{I})}{2}      \rightarrow 0 $$
and
$$\|\mathrm{Im}x_n\|_\mathcal{I}=\left\| \frac{x_n-x_n^*}{2}\right\|_\mathcal{I}\leqslant\frac{C(\|x_n\|_\mathcal{I}+\|x_n^*\|_\mathcal{I})}{2}  \rightarrow 0,$$
 where $C$ is the modulus of concavity of the quasi-norm $\|\cdot\|_{\mathcal{I}}$.
 Consequently, we may assume  that $x_n^*=x_n$ for all $n\in\mathbb{N}$. In this case, from the
 relationships $$x\xleftarrow{\|\cdot\|_\mathcal{J}} \delta(x_n)=
 \delta(x_n^*)=\delta(x_n)^*\xrightarrow{\|\cdot\|_\mathcal{J}} x^*,$$ we obtain $x=x^*$.

Writing $x=x_+-x_-$, where $x_+,x_-\geqslant 0$ and $x_+ x_-=0$,
we may assume that $x_+\neq 0$, otherwise we consider the sequence
$\{-x_n\}_{n=1}^\infty$. Since $x_+$ is a nonzero positive compact
operator, $\lambda=\|x_+\|_{\mathcal{B}(H)}$ is an eigenvalue of
$x_+$ corresponding to a finite-dimensional eigensubspace. Let $q$
be a projection onto this subspace.

Fix an arbitrary non-zero vector $\varphi\in q(H)$ and consider
the projection $p$ onto the one-dimensional subspace spanned by
$\varphi$. Combining the inequality $p\leqslant q$ with the
equality $qx_+q=\lambda q$, we obtain $pxp=pqxqp
=\lambda pqp=\lambda p$. Replacing, if necessary, the sequence
$\{x_n\}_{n=1}^\infty$ with the sequence
$\{\frac{x_n}{\lambda}\}_{n=1}^\infty$, we may assume
\begin{equation}\label{pxp}
 pxp
=p.
\end{equation}

Since $p$ is one-dimensional, it follows that $pap=\alpha
p,\alpha\in\mathbb{C}$ for any operator $a\in\mathcal{B}(H)$, in
particular, $px_np=\alpha_np$, therefore
$|\alpha_n|=\|px_np\|_\mathcal{I}\rightarrow 0$ for
$n\rightarrow\infty$. Writing
$$\|\delta(p)x_np\|_\mathcal{J} \leqslant \|\delta(p)\|_\mathcal{J} \| x_np \|_{\mathcal{B}(H)} \leqslant\|\delta(p)\|_\mathcal{J} \| x_n \|_{\mathcal{B}(H)} \leqslant \|\delta(p)\|_\mathcal{J} \| x_n \|_\mathcal{I},$$
we infer $\|\delta(p)x_np\|_\mathcal{J}\rightarrow 0$ and
$\|px_n\delta(p)\|_\mathcal{J}=\|(\delta(p)x_np)^*\|_\mathcal{J}\rightarrow
0$.

Since $pxp\begin{smallmatrix}(\ref{pxp})\\
=
\\~
\end{smallmatrix}p\in\mathcal{J}$, we have
\begin{gather*}
\begin{split}
\|\delta(p x_n p)-pxp\|_J &=\|\delta(p)x_np+p\delta(x_n)p+px_n\delta(p)-pxp\|_\mathcal{J}\leqslant
\\
&\leqslant C\|\delta(p)x_np+px_n\delta(p)\|_J+C\|p\delta(x_n)p-pxp\|_J\leqslant
\\
&\leqslant C^2\|\delta(p)x_np\|_J+C^2\|px_n\delta(p)\|_J+C\|p\delta(x_n)p-pxp\|_J
\rightarrow 0,
\end{split}
\end{gather*}
i.e. $\delta(px_np)\xrightarrow{\|\cdot\|_\mathcal{J}} pxp$. Hence
$$p\begin{smallmatrix}(\ref{pxp})\\
=
\\~
\end{smallmatrix}pxp=\|\cdot\|_\mathcal{J}-\lim\limits_{n\rightarrow\infty}\delta(px_np)= \|\cdot\|_\mathcal{J}-\lim\limits_{n\rightarrow\infty}\delta(\alpha_np)= \|\cdot\|_\mathcal{J}-\lim\limits_{n\rightarrow\infty}\alpha_n\delta(p)=0,$$ which is a contradiction, since $p\neq 0$.

Consequently, $\delta$ is a continuous mapping from $(\mathcal{I},\|\cdot\|_\mathcal{I})$ to $(\mathcal{J},\|\cdot\|_\mathcal{J})$.
\end{proof}

Note, that in (\cite{B-S_d}, Theorem 7) a version of Theorem
\ref{th6} is obtained for the case of an arbitrary symmetric
Banach ideal $\mathcal{I}=\mathcal{J}$ in a properly infinite von
Neumann algebra $\mathcal{M}$.

The following theorem gives a positive answer to Question \ref{q1}.
\begin{theorem}\label{th7}
If $\mathcal{I,J}$ are symmetric quasi-Banach ideals of compact operators from
$\mathcal{B}(H)$, then for every derivation
$\delta: \mathcal{I}\rightarrow \mathcal{J}$ there exists an operator
 $a\in\mathcal{B}(H)$ such that
$\delta(\cdot)=\delta_a(\cdot)=[a,\cdot]$, in addition, $\|a\|_{\mathcal{B}(H)}\leqslant\|\delta\|_\mathcal{B(I,J)}$ and $ax\in\mathcal{J}$ for all $x\in\mathcal{I}$.
\end{theorem}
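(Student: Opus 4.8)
The plan is to build the implementing operator $a$ explicitly from the action of $\delta$ on rank-one operators, to verify the commutator identity first on finite-rank operators, and then to propagate it to all of $\mathcal{I}$ by compressing with rank-one projections. Throughout, for $\mu,\nu\in H$ I write $p_{\mu,\nu}$ for the rank-one operator $p_{\mu,\nu}(\zeta)=(\zeta,\nu)\mu$; these satisfy $p_{\mu,\nu}p_{\sigma,\tau}=(\sigma,\nu)p_{\mu,\tau}$ and $\|p_{\mu,\nu}\|_{\mathcal{I}}=\|\mu\|_H\|\nu\|_H$ by the normalization of a symmetric quasi-norm together with Proposition \ref{sim qn}. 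By Theorem \ref{th6}, $\delta$ is continuous, so $\|\delta\|:=\|\delta\|_{\mathcal{B}(\mathcal{I},\mathcal{J})}<\infty$. I fix a unit vector $\xi_0$, put $p_0=p_{\xi_0,\xi_0}$, and define $a\colon H\to H$ by $a\eta:=\delta(p_{\eta,\xi_0})\xi_0$. Since $\eta\mapsto p_{\eta,\xi_0}$ is linear, $a$ is linear, and using Proposition \ref{sim qn}(c) together with $\|p_{\eta,\xi_0}\|_{\mathcal{I}}=\|\eta\|_H$ I obtain $\|a\eta\|_H\leq\|\delta(p_{\eta,\xi_0})\|_{\mathcal{B}(H)}\leq\|\delta(p_{\eta,\xi_0})\|_{\mathcal{J}}\leq\|\delta\|\,\|\eta\|_H$, so $a\in\mathcal{B}(H)$ with $\|a\|_{\mathcal{B}(H)}\leq\|\delta\|$, which is exactly the desired norm bound.

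Next I would compute $\delta$ on all rank-one operators. Expanding $p_{\mu,\xi_0}=p_{\mu,\xi_0}p_0$ and $p_{\xi_0,\nu}=p_0 p_{\xi_0,\nu}$ and applying the Leibniz rule, a short calculation gives $\delta(p_{\mu,\xi_0})=p_{a\mu,\xi_0}+p_{\mu,b}$ and $\delta(p_{\xi_0,\nu})=p_{a\xi_0,\nu}+p_{\xi_0,c\nu}$, where $b=\delta(p_0)^*\xi_0$ and $c$ is the bounded operator $c\nu=\delta(p_{\xi_0,\nu})^*\xi_0$. Writing $p_{\mu,\nu}=p_{\mu,\xi_0}p_{\xi_0,\nu}$ and invoking the Leibniz rule once more yields $\delta(p_{\mu,\nu})=p_{a\mu,\nu}+p_{\mu,c\nu}+\kappa\,p_{\mu,\nu}$ for a fixed scalar $\kappa$. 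The decisive point is the identification of $c$: applying $\delta$ to the associativity relation $p_{\mu,\nu}p_{\nu,\rho}=p_{\mu,\rho}$ (for $\|\nu\|_H=1$) and comparing coefficients forces $((c+a^*+\overline{\kappa}\,\mathbbm{1})\nu,\nu)=0$ for every $\nu\in H$. Since $H$ is complex, the quadratic form of an operator determines it, hence $c=-a^*-\overline{\kappa}\,\mathbbm{1}$. Substituting this back and using the conjugate-linearity of $p_{\mu,\cdot}$ collapses the formula to $\delta(p_{\mu,\nu})=p_{a\mu,\nu}-p_{\mu,a^*\nu}=[a,p_{\mu,\nu}]$; by linearity $\delta=\delta_a$ on $\mathcal{F}(H)$. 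I expect this complex-polarization identification of $c$ to be the cleanest route through the otherwise awkward bookkeeping, and note that it makes any reduction to $*$-derivations unnecessary.

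To pass from $\mathcal{F}(H)$ to all of $\mathcal{I}$ I would deliberately avoid density of finite-rank operators (which may fail in a general symmetric quasi-Banach ideal) and argue at the level of matrix coefficients. Fix $x\in\mathcal{I}$ and unit vectors $\mu,\nu$. Since $p_{\nu,\nu}\,x\,p_{\mu,\mu}=(x\mu,\nu)p_{\nu,\mu}$ is finite-rank, applying the Leibniz rule to this triple product and substituting $\delta=\delta_a$ on each finite-rank factor and on the product gives, after the commutator terms cancel, $p_{\nu,\nu}\,\delta(x)\,p_{\mu,\mu}=p_{\nu,\nu}\,[a,x]\,p_{\mu,\mu}$. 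As $\mu,\nu$ range over all unit vectors these compressions separate operators, whence $\delta(x)=[a,x]$ for every $x\in\mathcal{I}$; in particular $[a,x]=\delta(x)\in\mathcal{J}$.

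The remaining assertion that $ax\in\mathcal{J}$ for all $x\in\mathcal{I}$ is where I expect the genuine difficulty to lie. It is strictly stronger than $[a,x]\in\mathcal{J}$: writing $ax=\delta(x)+xa$ only places $xa$ in $\mathcal{I}$, and the free consequences $[a,b]x\in\mathcal{J}$ and $x[a,b]\in\mathcal{J}$ for all $b\in\mathcal{B}(H)$ (obtained from $\delta(bx),\delta(xb)\in\mathcal{J}$ and $b\delta(x),\delta(x)b\in\mathcal{J}$) are not by themselves enough to recover $ax$, since $a$ cannot in general be reconstructed from its own commutators. A naive truncation argument also fails, because for a non-separable ideal $x p_n\to x$ need not hold in $\|\cdot\|_{\mathcal{I}}$. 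I would therefore establish $ax\in\mathcal{J}$ (equivalently $a\in\mathcal{J}\!:\!\mathcal{I}$) through the structure of the $\mathcal{J}$-dual space: reduce to $x\geq 0$, factor $x$ through the Köthe-square ideals, and use the estimate $\|ap\|_{\mathcal{J}}\leq\|\delta\|$ for one-dimensional $p$ coming from the construction to control the singular values of $ax$ via the Calkin correspondence. This is the step I anticipate as the main obstacle, and it is precisely the point at which Theorem \ref{th7} must draw on the multiplier-space analysis.
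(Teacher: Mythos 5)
Everything you actually carry out is correct, and your operator $a$ coincides with the paper's: the paper sets $a(z(\varphi_0)):=\delta(zp_0)(\varphi_0)$ for all $z\in\mathcal{I}$, which on $z=p_{\eta,\varphi_0}$ is exactly your formula, and the bound $\|a\|_{\mathcal{B}(H)}\leqslant\|\delta\|_{\mathcal{B(I,J)}}$ is obtained the same way. Where you diverge is the verification of $\delta=[a,\cdot]$: the paper gets it in one stroke for every $x\in\mathcal{I}$ by applying the Leibniz rule to $\delta(xzp_0)$, which yields $[a,x](z(\varphi_0))=\delta(x)z(\varphi_0)$ for all $z\in\mathcal{I}$ and hence $[a,x]=\delta(x)$ because $\{z(\varphi_0):z\in\mathcal{I}\}=H$. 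Your detour through the explicit form of $\delta$ on rank-one operators, the complex-polarization identification of $c$, and the compression argument is sound (and your refusal to rely on density of $\mathcal{F}(H)$ is the right instinct), but it replaces a two-line computation by a substantially longer one; your final compression identity is essentially the paper's identity read against all rank-one $z$.

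The genuine gap is the one you flagged: $ax\in\mathcal{J}$ is not proved, and the route you sketch for it cannot succeed, because the assertion is in general false for the operator $a$ that you (and the paper) construct. Whenever $\delta=[a_0,\cdot]$ with $a_0\in\mathcal{J:I}$, the construction returns $a=a_0-(a_0\varphi_0,\varphi_0)\mathbbm{1}$, so $ax$ differs from $a_0x\in\mathcal{J}$ by the term $(a_0\varphi_0,\varphi_0)x$, which need not lie in $\mathcal{J}$. Concretely, take $\mathcal{I}=C_2$, $\mathcal{J}=C_1$ and $\delta=[p_0,\cdot]$, where $p_0$ is the projection onto $\mathbb{C}\varphi_0$; then $\delta$ maps $C_2$ into $C_1$, the construction gives $a=p_0-\mathbbm{1}$, and $ax=p_0x-x\notin C_1$ for every $x\in C_2\setminus C_1$. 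Hence no singular-value or K\"othe-duality estimate for this particular $a$ can establish $a\in\mathcal{J:I}$; what is missing is a correction by a scalar multiple of the identity. The efficient repair is Hoffman's theorem (Theorem \ref{th8}): since $[a,x]=\delta(x)\in\mathcal{J}$ for all $x\in\mathcal{I}$, one has $a\in D(\mathcal{I,J})=\mathcal{J:I}+\mathbb{C}\mathbbm{1}$, so $a=a_0+\lambda\mathbbm{1}$ with $a_0\in\mathcal{J:I}$, $\delta=[a_0,\cdot]$ and $a_0x\in\mathcal{J}$ by definition of $\mathcal{J:I}$; one must then still argue the norm bound for the corrected operator $a_0$, since it no longer follows verbatim from the construction. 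Be aware that the paper itself disposes of this step with the single sentence that the definition of $a$ ``immediately implies'' $ax\in\mathcal{J}$, to which the same example applies; your diagnosis that this is the real difficulty is therefore accurate, but your proposed plan aims at proving a statement that is not true as posed.
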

\begin{proof} Fix an arbitrary vector $\varphi_0\in H$ with $\|\varphi_0\|_H=1$ and consider projection
$p_0(\cdot):=(\cdot,\varphi_0)\varphi_0$ onto one-dimensional subspace spanned by $\varphi_0$. Obviously,
$p_0\in\mathcal{I\cap J}$.

Let $x\in\mathcal{I},x(\varphi_0)=0$ and $\varphi\in H$. Since
$$xp_0(\varphi)=x(p_0(\varphi))=
x\bigl((\varphi,\varphi_0)\varphi_0\bigl)=(\varphi,\varphi_0)x(\varphi_0)=0,$$
it follows that $xp_0=0$, and so $\delta(xp_0)(\varphi_0)=0.$
Consequently, the linear operator
$a(z(\varphi_0))=\delta(zp_0)(\varphi_0)$ is correctly defined on
the linear subspace $L:=\{z(\varphi_0):z\in\mathcal{I}\}\subset
H$. If $\varphi\in H, z(\cdot)=(\cdot,\varphi_0)\varphi$, then
$z\in\mathcal{I}$ and $z(\varphi_0)=\varphi$, which implies $L=H$.
The definition of the operator $a$ immediately implies that
$ax\in\mathcal{J}$ for all $x\in\mathcal{I}$.

For arbitrary $z\in\mathcal{B}(H), \varphi\in H$, we have
\begin{gather*}
\begin{split}
|zp_0|^2(\varphi)&=(p_0z^*zp_0)(\varphi)=(p_0z^*z)((\varphi,\varphi_0)\varphi_0)=(\varphi,\varphi_0)p_0(z^*z(\varphi_0))
=
\\
&=(z\varphi_0,z\varphi_0)(\varphi,\varphi_0)\varphi_0=(z\varphi_0,z\varphi_0)p_0(\varphi)=\|z(\varphi_0)\|_H^2p_0(\varphi),
\end{split}
\end{gather*}
in particular, $\|zp_0\|_{\mathcal{B}(H)}=\bigl\| |zp_0|
\bigl\|_{\mathcal{B}(H)}=\bigl\|\|z(\varphi_0)\|_Hp_0
\bigl\|_{\mathcal{B}(H)}=\|z(\varphi_0)\|_H$. Applying this
observation together with Theorem \ref{th6} guaranteing
$\|\delta(x)\|_{\mathcal{J}}\leqslant
\|\delta\|_\mathcal{B(I,J)}\|x\|_{\mathcal{I}}$ for all
$x\in\mathcal{I}$, we have
\begin{gather*}
\begin{split}
\|a(x(\varphi_0))\|_H&=\|\delta(xp_0)(\varphi_0)\|_H=\|\delta(xp_0)p_0\|_{\mathcal{B}(H)} \leqslant\|\delta(xp_0)\|_{\mathcal{B}(H)}\|p_0\|_{\mathcal{B}(H)}\leqslant
\\
&\leqslant \|\delta(xp_0)\|_\mathcal{J}\leqslant\|\delta\|_\mathcal{B(I,J)}\|xp_0\|_\mathcal{I} \leqslant
\\
&\leqslant\|\delta\|_\mathcal{B(I,J)}\|p_0\|_\mathcal{I}\|xp_0\|_{\mathcal{B}(H)}=
\|\delta\|_\mathcal{B(I,J)}\|x(\varphi_0)\|_H.
\end{split}
\end{gather*}
This shows that $a$ is a bounded operator on $H$  and
$\|a\|_{\mathcal{B}(H)}\leqslant\|\delta\|_\mathcal{B(I,J)}$.

Finally, for all  $x,z\in\mathcal{I}$ we have
\begin{gather*}
\begin{split}
[a,x](z(\varphi_0))&=ax(z(\varphi_0))-xa(z(\varphi_0))=a(xz(\varphi_0))-xa(z(\varphi_0))=
\\
&=\delta(xzp_0)(\varphi_0)-x\delta(zp_0)(\varphi_0)=\delta(x)zp_0(\varphi_0)=\delta(x)z(\varphi_0)
\end{split}
\end{gather*}
and since $L=H$, it follows
$\delta(\cdot)=[a,\cdot]=\delta_a(\cdot)$.
\end{proof}

Let $\mathcal{I,J}$ be arbitrary two-sided ideals in $\mathcal{B}(H)$. The set
$$D(\mathcal{I, J})=\{a\in\mathcal{B}(H): \quad ax-xa\in \mathcal{J},\quad \forall x\in\mathcal{I}\}$$
 is called the $\mathcal{J}$-essential commutant of $\mathcal{I}$, and the set
$$\mathcal{J:I}=\{a\in\mathcal{B}(H):  \quad ax\in\mathcal{J},\quad \forall x\in\mathcal{I}\}$$ is called the
$\mathcal{J}$-dual space of $\mathcal{I}$. It is clear that
$\mathcal{J:I}$ is a two-sided ideal in $\mathcal{B}(H)$, in
particular, $xa\in\mathcal{J}$ for all
$x\in\mathcal{I},a\in\mathcal{J:I}$ . If
$\mathcal{I}\nsubseteq\mathcal{J}$, then
$\mathbbm{1}\notin\mathcal{J:I}$, i.e.
$\mathcal{J:I}\neq\mathcal{B}(H)$, and so $\mathcal{J:I}$ is a
proper ideal in $\mathcal{B}(H)$. However, in the case when
$\mathcal{I}\subseteq\mathcal{J}$ we have
$\mathcal{J:I}=\mathcal{B}(H)$, in particular,
$C_r:C_p=\mathcal{B}(H)$ for all $0<p\leqslant r$, where
$C_p=\{x\in\mathcal{K}(H):
\|x\|_p=\bigl(tr(|x|^p)\bigl)^{\frac{1}{p}}<\infty\}$ is the
Schatten ideal of compact operators from $\mathcal{B}(H),0<p<
\infty, tr $ is the standart trace on $\mathcal{B_+}(H)$.

\begin{claim}\label{subset_K(H)} If $\mathcal{I,J}$ are proper two-sided ideals of compact operators in
$\mathcal{B}(H)$ and $\mathcal{I}\nsubseteq\mathcal{J}$, then $\mathcal{J:I}\subset\mathcal{K}(H)$.
\end{claim}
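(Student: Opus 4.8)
The plan is to apply the structural dichotomy for two-sided ideals recorded in Proposition \ref{non_separable}(ii) to the ideal $\mathcal{J:I}$. As noted just above the statement, $\mathcal{J:I}$ is a two-sided ideal in $\mathcal{B}(H)$, so that proposition tells us that either $\mathcal{J:I}\subseteq\mathcal{K}(H)$ --- which is precisely the desired conclusion --- or else $\mathcal{D}\subseteq\mathcal{J:I}$, where $\mathcal{D}=\{x\in\mathcal{B}(H):x(H)\text{ is separable}\}$. The whole argument then reduces to excluding the second alternative by deriving a contradiction with the hypothesis $\mathcal{I}\nsubseteq\mathcal{J}$.

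To rule out $\mathcal{D}\subseteq\mathcal{J:I}$, I would show that this inclusion would force $\mathcal{I}\subseteq\mathcal{J}$. Fix an arbitrary $x\in\mathcal{I}$. Since $x$ is compact, its range is separable, hence the closure $\overline{x(H)}$ is a separable closed subspace of $H$; let $p$ denote the orthogonal projection onto it. Then $p\in\mathcal{D}$ by the definition of $\mathcal{D}$, while $px=x$ because $p$ fixes every vector of the form $x(\varphi)$. Assuming $\mathcal{D}\subseteq\mathcal{J:I}$, we would have $p\in\mathcal{J:I}$, and then the very definition of the $\mathcal{J}$-dual space together with $x\in\mathcal{I}$ gives $x=px\in\mathcal{J}$. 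As $x\in\mathcal{I}$ was arbitrary, this yields $\mathcal{I}\subseteq\mathcal{J}$, contradicting the hypothesis.

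Consequently the second alternative in Proposition \ref{non_separable}(ii) cannot occur, and we conclude $\mathcal{J:I}\subseteq\mathcal{K}(H)$. The key step --- and the only genuinely non-formal point --- is the factorization $x=px$ through the range projection $p$: here the essential observations are that a compact operator automatically has separable range (so that $p\in\mathcal{D}$) and that this projection acts as a left identity on $x$. I do not expect any serious obstacle beyond securing this factorization, since everything else is a direct unwinding of the definitions of $\mathcal{D}$ and of $\mathcal{J:I}$ combined with the dichotomy already proved.
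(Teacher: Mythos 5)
Your proof is correct, and it takes a genuinely different --- and noticeably simpler --- route than the paper's. Both arguments start from the dichotomy of Proposition \ref{non_separable}(ii) applied to the two-sided ideal $\mathcal{J:I}$, and both exploit the fact that a compact operator has separable range, so that the projection onto the closure of its range lies in $\mathcal{D}$. The difference is in how the alternative $\mathcal{D}\subseteq\mathcal{J:I}$ is excluded. You show directly that it would force $\mathcal{I}\subseteq\mathcal{J}$: for each $x\in\mathcal{I}$ the projection $p$ onto $\overline{x(H)}$ satisfies $p\in\mathcal{D}\subseteq\mathcal{J:I}$ and $px=x$, whence $x=px\in\mathcal{J}$. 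The paper instead fixes a single positive $a\in\mathcal{I}\setminus\mathcal{J}$, compresses by the projection $p$ onto $L=\overline{a(H)}$, applies the separable case of the same statement to the ideals $p\mathcal{I}p$ and $p\mathcal{J}p$ in $\mathcal{B}(L)$, and reaches the contradiction that the identity $p$ of $\mathcal{B}(L)$ would be a compact operator on the infinite-dimensional space $L$. The essential identity $py'\in\mathcal{J}$ for $y'\in\mathcal{I}$ appears in the paper's proof too, but only inside the compressed algebra; your factorization $x=px$ short-circuits the whole compression step. One small bookkeeping point: Proposition \ref{non_separable} is stated in the paper under the standing hypothesis that $H$ is not separable (for separable $H$ the ideal $\mathcal{D}$ equals $\mathcal{B}(H)$ and part (i) fails), so for separable $H$ you should instead invoke Calkin's Theorem 1.4 after noting that $\mathcal{I}\nsubseteq\mathcal{J}$ makes $\mathcal{J:I}$ proper --- which is exactly how the paper opens its proof, and costs one sentence; alternatively, your own argument already shows $\mathbbm{1}\notin\mathcal{J:I}$ in that case.
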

\begin{proof} Since $\mathcal{I}\nsubseteq\mathcal{J}$, $\mathcal{J:I}$ is a proper two-sided ideal in $\mathcal{B}(H)$.
If $H$ is a separable Hilbert space, then
$\mathcal{J:I}\subset\mathcal{K}(H)$ (\cite{Calkin}, Theorem 1.4).
Suppose that $H$ is not separable and
$\mathcal{J:I}\nsubseteq\mathcal{K}(H)$. By Proposition
\ref{non_separable}, the proper two-sided ideal
$\mathcal{D}=\{x\in\mathcal{B}(H):x(H) \mbox{ is separable
}\}\subset\mathcal{J:I}$. Since $\mathcal{I}\nsubseteq\mathcal{J}$
there exists a positive compact operator $a\in\mathcal{I\setminus
J}$. Since $a\in \mathcal{D}$, we have that  $L:=\overline{a(H)}$
is separable. Let $p\in \mathcal{P}(H)$ be the orthogonal
projection onto $L$. Since $a\notin\mathcal{J}$, it follows that
$L$ is infinite-dimensional subspace. Indeed, if it were not the
case, then $a$ would be a finite rank operator and automatically
belonging to $a\in\mathcal{J}$. Therefore
$p\in\mathcal{D}\setminus\mathcal{K}(H)\subset\mathcal{J:I}$, in
addition, $0\neq a=pap\in
(p\mathcal{I}p)\setminus(p\mathcal{J}p)$, i.e.
$p\mathcal{I}p\nsubseteq p\mathcal{J}p$. Since $L$ is a separable
Hilbert space, we have
$(p\mathcal{J}p):(p\mathcal{I}p)\subset\mathcal{K}(L)$.

Let $y\in p\mathcal{I}p$, i.e. $y=py'p$ for some $y'\in\mathcal{I}$. Since $p\in\mathcal{D}\subset\mathcal{J:I}$ we have $py'\in\mathcal{J}$, hence, $p(py')p\in p\mathcal{J}p.$ Consequently, $p\in(p\mathcal{J}p):(p\mathcal{I}p)$, i.e. $p$ is a compact operator in $L$, which is a contradiction. Thus, $\mathcal{J:I}\subset\mathcal{K}(H)$.
\end{proof}

For arbitrary two-sided ideals $\mathcal{I,J}$ in $\mathcal{B}(H)$
we denote by $d(\mathcal{I,J})$ the set of all derivations
$\delta$ from $\mathcal{B}(H)$ to $\mathcal{B}(H)$ such that
$\delta(\mathcal{I})\subset\mathcal{J}.$  Obviously,
$d(\mathcal{I,J})\subset Der(\mathcal{I,J})$. To characterize the set $d(\mathcal{I,J})$ we need the following
theorem.

\begin{theorem}(\cite{Hoffman}, Theorem 1.1)\label{th8}
$D(\mathcal{I, J})=\mathcal{J:I}+\mathbb{C}\mathbbm{1}$.
\end{theorem}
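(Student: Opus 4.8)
The plan is to prove the two inclusions separately, the inclusion $\mathcal{J:I}+\mathbb{C}\mathbbm{1}\subseteq D(\mathcal{I,J})$ being elementary and the reverse inclusion carrying all the content. For the easy inclusion, take $a=b+\lambda\mathbbm{1}$ with $b\in\mathcal{J:I}$, $\lambda\in\mathbb{C}$. For every $x\in\mathcal{I}$ the scalar drops out of the commutator, so $[a,x]=[b,x]=bx-xb$. By definition of the $\mathcal{J}$-dual space $bx\in\mathcal{J}$, and since $\mathcal{J:I}$ is a two-sided ideal (as already observed in the text) also $xb\in\mathcal{J}$; hence $[a,x]\in\mathcal{J}$ and $a\in D(\mathcal{I,J})$.

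For the reverse inclusion fix $a\in D(\mathcal{I,J})$. If $\mathcal{I}\subseteq\mathcal{J}$ then $[a,x]\in\mathcal{I}\subseteq\mathcal{J}$ for every $a\in\mathcal{B}(H)$, so both sides equal $\mathcal{B}(H)$ and there is nothing to prove; thus assume $\mathcal{I}\nsubseteq\mathcal{J}$, in which case Proposition \ref{subset_K(H)} gives that $\mathcal{J:I}$ is a proper ideal contained in $\mathcal{K}(H)$. Choose a positive operator $c\in\mathcal{I}\setminus\mathcal{J}$, let $\{e_n\}$ be an orthonormal eigenbasis for $c$ completed to an orthonormal basis of $H$, and write $a_{mn}=(ae_n,e_m)$. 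The first step is to extract the scalar $\lambda$. For $m\neq n$ the rank-one ``transition'' operators $v_{mn}(\cdot)=(\cdot,e_n)e_m$ lie in $\mathcal{F}(H)\subseteq\mathcal{I}$, and a direct computation shows that the $(m,n)$-entry of $[a,v_{mn}]$ equals $a_{mm}-a_{nn}$. Since every commutator $[a,x]$ lies in $\mathcal{J}\subseteq\mathcal{K}(H)$, I would argue that the diagonal $\{a_{nn}\}$ must converge to a limit $\lambda$ which is the same along every orthonormal sequence: were two families of diagonal entries to stay $\varepsilon$-apart, one could assemble countably many $v_{m_k n_k}$ with pairwise disjoint indices into a single operator $w=\sum_k\gamma_k v_{m_kn_k}$ whose singular value sequence $\{|\gamma_k|\}$ is chosen in $E_{\mathcal{I}}\setminus E_{\mathcal{J}}$ (possible precisely because $\mathcal{I}\nsubseteq\mathcal{J}$), so that $w\in\mathcal{I}$ while the essentially block-diagonal commutator $[a,w]$ has singular values bounded below, up to a fixed constant, by $\varepsilon|\gamma_k|$, forcing $[a,w]\notin\mathcal{J}$ --- a contradiction.

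It then remains to show that $b:=a-\lambda\mathbbm{1}$ lies in $\mathcal{J:I}$, that is, that $bx\in\mathcal{J}$ for all $x\in\mathcal{I}$; note $b$ again belongs to $D(\mathcal{I,J})$ and, by the basis-independence of $\lambda$, all diagonal compressions of $b$ tend to $0$. Reducing to positive $x=\sum_n\nu_n q_n\in\mathcal{I}$ and combining the relation $bx-xb\in\mathcal{J}$ with the vanishing of the diagonal of $b$, I would estimate the singular values of $bx$ by those of $x$ multiplied by a null sequence and identify the resulting sequence space with the multiplier space $E_{\mathcal{J}}:E_{\mathcal{I}}$ under the Calkin correspondence, concluding $bx\in\mathcal{J}$. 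I expect this last step to be the main obstacle: one must pass from purely entrywise and asymptotic information to genuine membership in the ideal $\mathcal{J}$, and the tempting shortcut of splitting $a$ into its diagonal and off-diagonal parts is not available, since off-diagonal truncation is unbounded on general symmetric ideals (already on $C_1$, by the classical failure of triangular truncation), so $a-\lambda\mathbbm{1}$ must be handled as a single multiplier. Hence $b=a-\lambda\mathbbm{1}\in\mathcal{J:I}$ and $a\in\mathcal{J:I}+\mathbb{C}\mathbbm{1}$, completing the proof.
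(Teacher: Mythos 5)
First, a point of reference: the paper does not prove this statement at all --- Theorem \ref{th8} is imported verbatim from Hoffman (\cite{Hoffman}, Theorem 1.1) and used as a black box, so there is no internal proof to measure your attempt against; it must stand on its own. Your easy inclusion $\mathcal{J:I}+\mathbb{C}\mathbbm{1}\subseteq D(\mathcal{I,J})$ is correct and complete, and the reduction to the case $\mathcal{I}\nsubseteq\mathcal{J}$ together with the computation $([a,v_{mn}]e_n,e_m)=a_{mm}-a_{nn}$ is fine. But the hard inclusion as written has two genuine gaps. The first is in the extraction of $\lambda$: the operator $[a,w]=\sum_k\gamma_k(av_{m_kn_k}-v_{m_kn_k}a)$ is \emph{not} block-diagonal (the vectors $ae_{m_k}$, resp. $a^*e_{n_k}$, are not pairwise orthogonal), so the claim that its singular values dominate $c\,\varepsilon|\gamma_k|$ is not a computation but an assertion. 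To make it one, you would have to pass to the generalized diagonal $\sum_j p_{m_j}[a,w]p_{n_j}$ and show that this compression stays in $\mathcal{J}$ whenever $[a,w]$ does; the usual device (averaging $v_\theta^*[a,w]u_\theta$ over diagonal unitaries) is problematic here because vector-valued integration can fail in quasi-Banach spaces, and for arbitrary two-sided ideals the invariance under generalized diagonal maps is itself a nontrivial lemma that you would need to state and prove.

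The second and more serious gap is the final step, which you yourself flag as ``the main obstacle'': passing from ``$[b,x]\in\mathcal{J}$ for all $x\in\mathcal{I}$ and all diagonal compressions of $b$ vanish asymptotically'' to ``$bx\in\mathcal{J}$ for all $x\in\mathcal{I}$.'' This is where the entire content of Hoffman's theorem sits, and no mechanism is offered for it. Note that vanishing of $(be_n,e_n)$ along every orthonormal sequence only yields compactness of $b$, which is very far from $b\in\mathcal{J:I}$; and the obvious algebraic manipulations (e.g.\ writing $2bx=(bx-xb)+(bx+xb)$, or inserting $x^{1/2}$ to exploit $bx-x^{1/2}bx^{1/2}=[b,x^{1/2}]x^{1/2}$) stall because $x\in\mathcal{I}$ does not imply $x^{1/2}\in\mathcal{I}$, so the commutator hypothesis cannot be applied to $x^{1/2}$. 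As it stands the argument is an accurate road map of where the difficulties lie rather than a proof; the two steps singled out above are precisely the ones that would have to be supplied.
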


It should be noted that  Theorem \ref{th8} holds for arbitrary von
Neumann algebras, i.e. for any two-sided ideals $\mathcal{I,J}$ in
von Neumann algebra $\mathcal{M}$ we have $D(\mathcal{I,
J})=\mathcal{J:I}+Z(\mathcal{M})$, where $Z(\mathcal{M})$ is the
center of  $\mathcal{M}$(\cite{B-S}, Corollary 5).

\begin{claim}\label{pr9}
$d(\mathcal{I,J})=\{\delta_a: a\in D(\mathcal{I,J})\}$.
\end{claim}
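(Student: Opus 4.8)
The plan is to establish the set equality $d(\mathcal{I,J})=\{\delta_a: a\in D(\mathcal{I,J})\}$ by proving the two inclusions separately, the first being essentially formal and the second resting on the innerness of derivations of $\mathcal{B}(H)$.

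First I would dispose of the inclusion $\{\delta_a: a\in D(\mathcal{I,J})\}\subseteq d(\mathcal{I,J})$. If $a\in D(\mathcal{I,J})\subseteq\mathcal{B}(H)$, then $\delta_a(\cdot)=[a,\cdot]$ is an inner derivation of $\mathcal{B}(H)$, and the defining property of $D(\mathcal{I,J})$ says exactly that $\delta_a(x)=[a,x]\in\mathcal{J}$ for every $x\in\mathcal{I}$; that is, $\delta_a(\mathcal{I})\subseteq\mathcal{J}$, so $\delta_a\in d(\mathcal{I,J})$. Nothing more is needed here.

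For the reverse inclusion, let $\delta\in d(\mathcal{I,J})$, so that $\delta$ is a derivation of the whole algebra $\mathcal{B}(H)$ with $\delta(\mathcal{I})\subseteq\mathcal{J}$. The key step is that every derivation of $\mathcal{B}(H)$ is inner: since $\mathcal{B}(H)$ is a von Neumann algebra, the classical theorem (see \cite{Sak}) yields an operator $a\in\mathcal{B}(H)$ with $\delta=\delta_a$. Once this is granted, the hypothesis $\delta(\mathcal{I})\subseteq\mathcal{J}$ becomes $[a,x]\in\mathcal{J}$ for all $x\in\mathcal{I}$, which is precisely the statement $a\in D(\mathcal{I,J})$. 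Thus $\delta=\delta_a$ with $a\in D(\mathcal{I,J})$, completing the inclusion and hence the equality.

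The only non-formal ingredient is the innerness of derivations of $\mathcal{B}(H)$, and I expect this to be the one point deserving care. If one prefers a self-contained argument avoiding external citation, I would instead restrict $\delta$ to the ideal $\mathcal{K}(H)$, which is a symmetric Banach ideal under the uniform norm, and apply Theorem \ref{th7} (with $\mathcal{I}=\mathcal{J}=\mathcal{K}(H)$) to produce $a\in\mathcal{B}(H)$ implementing $\delta$ on $\mathcal{K}(H)$. One then upgrades $\delta=\delta_a$ from $\mathcal{K}(H)$ to all of $\mathcal{B}(H)$: the difference $D:=\delta-\delta_a$ is a derivation of $\mathcal{B}(H)$ vanishing on $\mathcal{K}(H)$, and for $b\in\mathcal{B}(H)$ and $k\in\mathcal{K}(H)$ the Leibniz rule gives $D(bk)=D(b)k$, while $bk\in\mathcal{K}(H)$ forces $D(bk)=0$; letting $k$ run over rank-one projections yields $D(b)=0$, so $\delta=\delta_a$ on $\mathcal{B}(H)$. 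The delicate point in this route is justifying that Theorem \ref{th7} applies at all, i.e. that $\delta$ carries $\mathcal{K}(H)$ into $\mathcal{K}(H)$; I would verify this first on finite-rank operators (a rank-one projection $p$ satisfies $p\delta(p)p=0$ and $(\mathbbm{1}-p)\delta(p)(\mathbbm{1}-p)=0$, so $\delta(p)$ has rank at most two) and then extend by continuity. With innerness in hand, by either route, the proposition follows immediately.
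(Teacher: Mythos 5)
Your proof is correct and follows essentially the same route as the paper: the forward inclusion is immediate from the definition of $D(\mathcal{I,J})$, and the reverse inclusion invokes the classical fact that every derivation of the von Neumann algebra $\mathcal{B}(H)$ is inner, exactly as the paper does. The alternative self-contained argument you sketch is a reasonable supplement, but it is not needed and is not what the paper uses.
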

\begin{proof}
Let $\delta_a(\cdot)=[a,\cdot]$ be inner derivation on $\mathcal{B}(H)$ generated by the operator $a\in D(\mathcal{I,J})$. For all $x\in\mathcal{I}$ we have $\delta_a(x)=[a,x]=ax-xa\in \mathcal{J}$, i.e. $\delta_a\in d(\mathcal{I,J})$.

Conversely, let $\delta\in d(\mathcal{I,J})$. Since $\delta$ is a derivation from $\mathcal{B}(H)$ to $\mathcal{B}(H)$ there exists an operator  $a\in\mathcal{B}(H)$ such that $\delta=\delta_a$. If $x\in\mathcal{I}$, then  $[a,x]=\delta(x)\in\mathcal{J}$, i.e. $a\in D(\mathcal{I,J})$.
\end{proof}

Now, let $\mathcal{I,J}$ be arbitrary symmetric quasi-Banach ideals of compact operators from $\mathcal{B}(H)$. According to Theorem \ref{th7}, for each derivation $\delta\in Der(\mathcal{I,J})$ there exists an operator $a\in\mathcal{B}(H)$ such that $\delta(x)=\delta_a(x)=[a,x]$ and $ax\in\mathcal{J}$ for all $x\in\mathcal{I}$, i.e. $a\in\mathcal{J:I}$. Since $a\in\mathcal{J:I}$ we have $\delta\in d(\mathcal{I,J})$ (see Theorem \ref{th8}). Thus $Der(\mathcal{I,J})=d(\mathcal{I,J})$ for any symmetric quasi-Banach ideals $\mathcal{I,J}$ of compact operators.
Conversely, if $a\in \mathcal{J:I}$, then $a\in D(\mathcal{I,J})$ (see Theorem \ref{th8}), consequently $\delta_a\in Der(\mathcal{I,J})$ (see Proposition \ref{pr9}).

Hence, the following theorem holds.

\begin{theorem}\label{pr10}
For arbitrary symmetric quasi-Banach ideals $\mathcal{I,J}$ of
compact operators in  $\mathcal{B}(H)$ each derivation
$\delta:\mathcal{I}\to\mathcal{J}$ has a form $\delta=\delta_a$
for some $a\in \mathcal{J:I}$, in addition
$\|a\|_{\mathcal{B}(H)}\leqslant\|\delta_a\|_\mathcal{B(I,J)}$.
Conversely, $\delta_a\in Der(\mathcal{I,J})$ for all
$a\in\mathcal{J:I}.$
\end{theorem}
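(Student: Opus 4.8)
The plan is to assemble this description directly from the structural results already established, since Theorem \ref{pr10} is essentially a synthesis of Theorem \ref{th7} with Hoffman's Theorem \ref{th8}. The statement splits into a forward inclusion (every derivation is inner and implemented by an operator lying in $\mathcal{J:I}$) and a converse (every operator in $\mathcal{J:I}$ implements a derivation into $\mathcal{J}$). I would treat these two directions separately, and the norm estimate $\|a\|_{\mathcal{B}(H)}\leqslant\|\delta_a\|_\mathcal{B(I,J)}$ will come for free from Theorem \ref{th7}.

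For the forward direction I would start with an arbitrary $\delta\in Der(\mathcal{I,J})$ and invoke Theorem \ref{th7} to produce an operator $a\in\mathcal{B}(H)$ with $\delta=\delta_a=[a,\cdot]$ satisfying both $\|a\|_{\mathcal{B}(H)}\leqslant\|\delta_a\|_\mathcal{B(I,J)}$ and $ax\in\mathcal{J}$ for every $x\in\mathcal{I}$. The crucial observation is that this latter containment is precisely the defining property of the $\mathcal{J}$-dual space, so $a\in\mathcal{J:I}$ by definition, and the norm bound is recorded verbatim. This settles the first assertion without further work.

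For the converse I would fix an arbitrary $a\in\mathcal{J:I}$ and check that $\delta_a$ maps $\mathcal{I}$ into $\mathcal{J}$. Since $\mathcal{J:I}$ is a two-sided ideal, both $ax\in\mathcal{J}$ and $xa\in\mathcal{J}$ hold for every $x\in\mathcal{I}$, whence $[a,x]=ax-xa\in\mathcal{J}$; as $\delta_a$ is always an inner derivation on $\mathcal{B}(H)$, it follows that $\delta_a\in Der(\mathcal{I,J})$. Equivalently, one notes $\mathcal{J:I}\subseteq\mathcal{J:I}+\mathbb{C}\mathbbm{1}=D(\mathcal{I,J})$ by Theorem \ref{th8} and then applies Proposition \ref{pr9} to conclude $\delta_a\in d(\mathcal{I,J})\subseteq Der(\mathcal{I,J})$.

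I do not anticipate any genuine obstacle at this stage: all the analytic difficulty has already been absorbed into Theorem \ref{th6} (automatic continuity) and Theorem \ref{th7} (boundedness of the implementing operator together with the containment $ax\in\mathcal{J}$). The only point demanding care is to recognize that Theorem \ref{th7} delivers exactly the membership $a\in\mathcal{J:I}$, and not merely $a\in\mathcal{B}(H)$, so that the identity $Der(\mathcal{I,J})=d(\mathcal{I,J})$ emerges as an equality of sets. Once that identification is made explicit, the theorem follows as an immediate corollary of the preceding results.
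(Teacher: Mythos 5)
Your proposal is correct and follows essentially the same route as the paper: the forward direction reads off $a\in\mathcal{J:I}$ from the containment $ax\in\mathcal{J}$ supplied by Theorem \ref{th7}, and the converse is the paper's own argument via Theorem \ref{th8} and Proposition \ref{pr9} (your direct variant, using that $\mathcal{J:I}$ is a two-sided ideal so that $xa\in\mathcal{J}$ as well, is equally valid and is noted explicitly in the paper). Nothing is missing.
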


If $0<r<p<\infty$, then we have $C_r:C_p=C_q$, where
$\frac{1}{q}=\frac{1}{r}-\frac{1}{p}$ (\cite{Hoffman}, Proposition
5.6). Therefore, the following corollary follows immediately from
Theorem \ref{pr10}.
\begin{corollary}
If $0<p\leqslant r<\infty$, then the mapping $\delta:C_p\to C_r$
is a derivation if and only if $\delta=\delta_a$ for some $a\in
\mathcal{B}(H)$. If $0<r<p< \infty$, then the mapping
$\delta:C_p\to C_r$ is a derivation if and only if
$\delta=\delta_a$ for some $a\in C_q$, where
$\frac{1}{q}=\frac{1}{r}-\frac{1}{p}$.

\end{corollary}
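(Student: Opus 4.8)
The plan is to read this Corollary off from Theorem \ref{pr10} by specialising the ideals to $\mathcal{I}=C_p$ and $\mathcal{J}=C_r$, so that the only genuine work is to verify that the hypotheses of that theorem are satisfied and then to identify the $C_r$-dual space $C_r:C_p$ in each of the two regimes $p\leqslant r$ and $r<p$.

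First I would record that for every $0<p<\infty$ the Schatten class $C_p$ is a symmetric quasi-Banach ideal of compact operators. For $p\geqslant 1$ this is the classical fact that $(C_p,\|\cdot\|_p)$ is a symmetric Banach ideal, whereas for $0<p<1$ the sequence space $\ell_p$ is a $p$-convex symmetric quasi-Banach sequence space in $c_0$ whose image under the Calkin correspondence is precisely $C_p$; hence Theorem \ref{K_S} guarantees that $(C_p,\|\cdot\|_p)$ is a $p$-convex symmetric quasi-Banach ideal. In particular both $C_p$ and $C_r$ fall within the scope of Theorem \ref{pr10}, which then applies directly: a linear mapping $\delta:C_p\to C_r$ is a derivation if and only if $\delta=\delta_a$ for some $a\in C_r:C_p$.

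It remains to evaluate $C_r:C_p$. In the case $0<p\leqslant r<\infty$ the Schatten ideals are nested, $C_p\subseteq C_r$, so $C_r:C_p=\mathcal{B}(H)$, exactly as already observed just before Proposition \ref{subset_K(H)}; substituting this into Theorem \ref{pr10} yields the first assertion, with $a\in\mathcal{B}(H)$. In the case $0<r<p<\infty$ I would invoke Hoffman's identity $C_r:C_p=C_q$ with $\frac{1}{q}=\frac{1}{r}-\frac{1}{p}$ (\cite{Hoffman}, Proposition 5.6); substituting into Theorem \ref{pr10} gives the second assertion, with $a\in C_q$.

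There is essentially no serious obstacle, since every substantive ingredient—the description of $\mathrm{Der}(\mathcal{I,J})$ in Theorem \ref{pr10}, the completeness and symmetry of the Schatten quasi-norms via Theorem \ref{K_S}, and the two computations of $C_r:C_p$—is already available. The only point requiring a moment's care is confirming that $C_p$ with $0<p<1$ is covered by Theorem \ref{pr10}, which is why I would explicitly single out the $p$-convexity of $\ell_p$ together with Theorem \ref{K_S}; once this is in place, the Corollary is a direct specialisation of Theorem \ref{pr10}.
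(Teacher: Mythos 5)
Your proposal is correct and follows the paper's own route exactly: specialise Theorem \ref{pr10} to $\mathcal{I}=C_p$, $\mathcal{J}=C_r$, use $C_r:C_p=\mathcal{B}(H)$ for $p\leqslant r$ and Hoffman's identity $C_r:C_p=C_q$ for $r<p$. Your extra remark that $C_p$ for $0<p<1$ is covered via the $p$-convexity of $\ell_p$ and Theorem \ref{K_S} is a point the paper leaves implicit, but it does not change the argument.
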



\section{The $\mathcal{J}$-dual space of $\mathcal{I}$ for symmetric quasi-Banach ideals $\mathcal{I}$ and $\mathcal{J}$}
In this section we show that any symmetric quasi-Banach ideal
$(\mathcal{I},\|\cdot\|_\mathcal{I})$ of compact operators from
$\mathcal{B}(H)$ has a form of $\mathcal{I}=C_{E_\mathcal{I}}$
with the quasi-norm
$\|\cdot\|_\mathcal{I}=\|\cdot\|_{C_{E_\mathcal{I}}}$ for a
special symmetric quasi-Banach sequence space
$(E_\mathcal{I},\|\cdot\|_{E_\mathcal{I}})$ in $c_0$ constructed
by $\mathcal{I}$  with the help of Calkin
correspondence. The equality
$\mathcal{J:I}=C_{E_\mathcal{J}:E_\mathcal{I}}$ established in
this section provides a full description of all derivations $\delta\in Der(\mathcal{I,J})$ in terms of $E_\mathcal{J}$-dual space
$E_\mathcal{J}:E_\mathcal{I}$ of $E_\mathcal{I}$ of symmetric
quasi-Banach sequence spaces $E_\mathcal{I}$ and $E_\mathcal{J}$
in $c_0$.

A quasi-Banach lattice $E$ is a vector lattice with a complete
quasi-norm $\|\cdot\|_E$, such that  $\|a\|_E\leqslant\|b\|_E$
whenever  $a,b\in E$ and $|a|\leqslant|b|$. In this case, $\bigl\|
|a| \bigl\|_E=\|a\|_E$ for all $a\in E$ and the
 lattice operations $a\vee b$ and $a\wedge b$ are continuous in the topology $\tau_d$,
 generated by the metric $d(a,b)=\interleave a-b \interleave_E^p$, where
 $\interleave\cdot\interleave_E$ is a $p$-additive quasi-norm equivalent to the quasi-norm $\|\cdot\|_E$.
 Consequently, the set $E_+=\{a\in E:a\geqslant 0\}$ is closed in $(E,\tau_d)$.
 Thus, for any increasing sequence $\{a_k\}_{k=1}^\infty\subset E$ converging in the topology $\tau_d$
 to some $a\in E$, we have $a=\sup\limits_{k\geq 1}a_k$ (\cite{Schaefer}, Ch.\setcounter{gl}{5}\Roman{gl},\S 4).

A sequence $\{a_n\}_{n=1}^\infty$ from a vector lattice $ E$ is
said to be $(r)$-convergent to  $a\in E$ (notation:
$a_n\xrightarrow{(r)} a$) with the regulator $b\in E_+$, if and
only if there exists a sequence of positive numbers
$\varepsilon_n\downarrow 0$ such that
$|a_n-a|\leqslant\varepsilon_n b$ for all $n\in\mathbb{N}$ (see
e.g. \cite{V}, Ch.\setcounter{gl}{3}\Roman{gl}, \S 11).

Observe, that in any quasi-Banach lattice $(E,\|\cdot\|_E)$ it
follows from $a_n\xrightarrow{(r)} a, a_n,a\in E$ that
$\|a_n-a\|_E\rightarrow 0$.

The following proposition is a quasi-Banach version of the
well-known criterion of sequential convergence in Banach lattices.

\begin{claim}\label{Vulih} (compare \cite{V}, Ch. \setcounter{gl}{7}\Roman{gl}, Theorem \setcounter{gl}{7}\Roman{gl}.2.1)
Let $(E, \|\cdot\|_E)$ be a quasi-Banach lattice, $a,a_n\in E$. The following conditions are equivalent:

(i) $\|a_n-a\|_E\rightarrow 0$ for $n\rightarrow\infty$;

(ii) for any subsequence $a_{n_k}$ there exists a subsequence
$a_{n_{k_s}}$ such that $a_{n_{k_s}}\xrightarrow{(r)}a$.
\end{claim}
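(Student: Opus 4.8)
The plan is to establish the two implications separately. The implication (ii)$\Rightarrow$(i) is routine and rests only on the observation (recorded just before the statement) that $(r)$-convergence forces norm convergence; the implication (i)$\Rightarrow$(ii) is where the analytic content lies, and I would build the required regulator by a summable-series construction.

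For (ii)$\Rightarrow$(i) I would invoke the standard subsequence principle for convergence in the invariant metric $d(x,y)=\interleave x-y\interleave_E^p$. Suppose (i) fails; then some subsequence $a_{n_k}$ satisfies $\|a_{n_k}-a\|_E\geqslant\varepsilon>0$ for all $k$. By (ii) this subsequence admits a further subsequence $a_{n_{k_s}}$ with $a_{n_{k_s}}\xrightarrow{(r)}a$, and since $(r)$-convergence implies $\|a_{n_{k_s}}-a\|_E\rightarrow 0$, we reach a contradiction with $\|a_{n_{k_s}}-a\|_E\geqslant\varepsilon$. Hence (i) holds.

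For (i)$\Rightarrow$(ii) fix a subsequence $a_{n_k}$; since $\|a_{n_k}-a\|_E\rightarrow 0$, I would first pass to a further subsequence $a_{n_{k_s}}$ decaying fast, say $\|a_{n_{k_s}}-a\|_E\leqslant 4^{-s}$. Writing $\interleave\cdot\interleave_E$ for the equivalent $p$-additive quasi-norm with $\alpha\interleave x\interleave_E\leqslant\|x\|_E$, and using $\||x|\|_E=\|x\|_E$, I obtain $\interleave 2^s|a_{n_{k_s}}-a|\interleave_E^p\leqslant\alpha^{-p}2^{sp}4^{-sp}=\alpha^{-p}2^{-sp}$, which is summable in $s$ because $0<p\leqslant 1$. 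By Proposition \ref{qb} the series $\sum_s 2^s|a_{n_{k_s}}-a|$ converges in $E$ to some $b$; since its partial sums form an increasing sequence $\tau_d$-converging to $b$, the supremum property quoted above gives $b=\sup_m\sum_{s=1}^m 2^s|a_{n_{k_s}}-a|\in E_+$ and, in particular, $2^s|a_{n_{k_s}}-a|\leqslant b$ for every $s$. Thus $|a_{n_{k_s}}-a|\leqslant 2^{-s}b$ with $2^{-s}\downarrow 0$, i.e. $a_{n_{k_s}}\xrightarrow{(r)}a$ with regulator $b$, as required.

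I expect the main obstacle to be precisely this regulator construction in (i)$\Rightarrow$(ii): one must convert the scalar decay of $\|a_{n_{k_s}}-a\|_E$ into a single dominating element $b$ of the lattice. The two quasi-Banach ingredients that make this possible are the $p$-additivity of an equivalent quasi-norm, which via Proposition \ref{qb} guarantees convergence of $\sum_s 2^s|a_{n_{k_s}}-a|$ despite the failure of the ordinary triangle inequality, and the order structure reflected in the fact that an increasing $\tau_d$-convergent sequence has its limit as supremum, which supplies the termwise domination $2^s|a_{n_{k_s}}-a|\leqslant b$.
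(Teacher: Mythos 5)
Your proposal is correct and follows essentially the same route as the paper: the regulator in (i)$\Rightarrow$(ii) is built by selecting a rapidly norm-convergent subsequence, summing the scaled moduli via the $p$-additive quasi-norm and Proposition \ref{qb}, and extracting the termwise bound from the supremum of the increasing partial sums (the paper uses weights $k^{1/p}$ against decay $k^{-3/p}$ where you use $2^s$ against $4^{-s}$, an immaterial difference). The (ii)$\Rightarrow$(i) direction you spell out is exactly the standard subsequence argument the paper cites as a verbatim repetition of the Banach lattice case.
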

\begin{proof}
Without loss of generality we may assume that $a=0$.

$(i)\Rightarrow(ii)$ For an equivalent $p$-additive quasi-norm
$\interleave\cdot\interleave_E$ we have
$\interleave~|a_n|~\interleave_E\rightarrow 0$ for
$n\rightarrow\infty$. Hence, we may choose an increasing sequence
of positive integers $n_1<n_2<\dots<n_k<\dots$ such that
$\interleave~|a_{n_k}|~\interleave^p\leqslant \frac{1}{k^3}$. The
estimate
$$\sum\limits_{k=1}^\infty \interleave k^{\frac{1}{p}}|a_{n_k}| \interleave^p=
\\\sum\limits_{k=1}^\infty k\interleave~|a_{n_k}|~\interleave^p\leqslant\sum\limits_{k=1}^\infty \frac{1}{k^2}<\infty,$$
shows that  the series $\sum\limits_{k=1}^\infty
k^{\frac{1}{p}}|a_{n_k}|$ converges in  $(E,\|\cdot\|_E)$ to some
$b\in E_+$  (see Proposition \ref{qb}) and therefore there exists
$b=\sup\limits_{n\geq
1}\sum\limits_{k=1}^nk^{\frac{1}{p}}|a_{n_k}|$ such that we also
have
 $k^{\frac{1}{p}}|a_{n_k}|\leqslant b$
for all $k\in\mathbb{N}$. In particular, $ |a_{n_k}|\leqslant
k^{-\frac{1}{p}} b$, which immediately implies
$a_{n_k}\xrightarrow{(r)} 0$. The same reasoning may be repeated
for any subsequence $\{a_{n_k}\}_{k=1}^\infty$.

The proof of the implication $(ii)\Rightarrow(i)$ is the verbatim
repetition of the analogous result for Banach lattices (\cite{V},
Ch. \setcounter{gl}{7}\Roman{gl}, Theorem
\setcounter{gl}{7}\Roman{gl}.2.1).
\end{proof}

Let $L_1(0,\infty)$ be the Banach space of all integrable
functions on $(0,\infty)$ with the norm
$\|f\|_1:=\int\limits_0^\infty|f|dm$ and $L_\infty(0,\infty)$  be
the Banach space of all essentially bounded measurable functions
on $(0,\infty)$ with the norm
$\|f\|_\infty:=\mathrm{esssup}\{|f(t)|: 0<t<\infty\}$). For each
$f\in L_1(0,\infty)+L_\infty(0,\infty)$ we define the decreasing
rearrangement $f^*$ of $f$ by setting
$$f^*(t):=\inf\bigl\{s>0:m(\{|f|>s\})\leqslant t\bigl\},t>0.$$
The function $f^*(t)$ is equimeasurable with $|f|$, in particular,
$f^*\in L_1(0,\infty)+L_\infty(0,\infty)$ and $f^*(t)$ is
non-increasing and right-continuous.

We need the following properties of decreasing rearrangements (see
e.g. \cite{K-P-S}, Ch.\setcounter{gl}{2}\Roman{gl}, \S 2).

\begin{claim}\label{rearrag}
Let $f,g\in L_1(0,\infty)+L_\infty(0,\infty)$. We have

(i) if $|f|\leqslant|g|$, then $f^*\leqslant g^*$;

(ii) $(\alpha f)^*=|\alpha|f^*$ for all $\alpha\in\mathbb{R}$;

(iii) if $f\in L_\infty(0,\infty)$, then $(fg)^*\leqslant \|f\|_\infty g^*$;

(iv) $(f+g)^*(t+s)\leqslant f^*(t)+g^*(s)$;

(v) if $fg\in L_1(0,\infty)+L_\infty(0,\infty)$, then $(fg)^*(t+s)\leqslant f^*(t)g^*(s)$.
\end{claim}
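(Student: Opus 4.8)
The plan is to reduce every assertion to elementary manipulations of the distribution function $d_f(s):=m(\{|f|>s\})$, $s>0$, of which $f^*$ is the right-continuous generalised inverse, namely $f^*(t)=\inf\{s>0:d_f(s)\le t\}$. First I record that $d_f$ is non-increasing and right-continuous: right-continuity follows from continuity from below of $m$, since $\{|f|>s'\}\uparrow\{|f|>s\}$ as $s'\downarrow s$. Part (i) is then immediate, because $|f|\le|g|$ gives $\{|f|>s\}\subseteq\{|g|>s\}$, hence $d_f\le d_g$, so the defining set for $f^*(t)$ contains that for $g^*(t)$ and the infimum can only decrease. For (ii) with $\alpha\ne0$ I compute $d_{\alpha f}(s)=d_f(s/|\alpha|)$ and substitute $u=s/|\alpha|$ in the infimum to obtain $(\alpha f)^*(t)=|\alpha|f^*(t)$ (the case $\alpha=0$ being trivial). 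Part (iii) is then a corollary: $|fg|\le\|f\|_\infty|g|$ together with (i) and (ii) yields $(fg)^*\le(\|f\|_\infty g)^*=\|f\|_\infty g^*$.

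The crux of (iv) and (v) is a single lemma: $d_f(f^*(t))\le t$ for every $t>0$. To see this, let $\sigma=f^*(t)$; for each $s>\sigma$ the definition of the infimum yields some $s'\in[\sigma,s)$ with $d_f(s')\le t$, whence $d_f(s)\le d_f(s')\le t$ by monotonicity, and letting $s\downarrow\sigma$ and invoking right-continuity of $d_f$ gives $d_f(\sigma)\le t$. Granting this, for (iv) I use the pointwise inclusion $\{|f+g|>\lambda+\mu\}\subseteq\{|f|>\lambda\}\cup\{|g|>\mu\}$ (if neither $|f|>\lambda$ nor $|g|>\mu$ then $|f+g|\le\lambda+\mu$), which gives $d_{f+g}(\lambda+\mu)\le d_f(\lambda)+d_g(\mu)$. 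Taking $\lambda=f^*(t)$ and $\mu=g^*(s)$ and applying the lemma yields $d_{f+g}(f^*(t)+g^*(s))\le t+s$, so $f^*(t)+g^*(s)$ lies in the defining set of $(f+g)^*(t+s)$, proving $(f+g)^*(t+s)\le f^*(t)+g^*(s)$. For (v) the identical argument works with the inclusion $\{|fg|>\lambda\mu\}\subseteq\{|f|>\lambda\}\cup\{|g|>\mu\}$, valid for all $\lambda,\mu\ge0$ since $|f|\le\lambda$ and $|g|\le\mu$ force $|fg|\le\lambda\mu$; here the hypothesis $fg\in L_1+L_\infty$ guarantees that $(fg)^*$ is well defined.

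The main obstacle is the lemma $d_f(f^*(t))\le t$, as it is the only place where right-continuity of the distribution function is genuinely used; once it is in hand the two subadditivity statements (iv) and (v) follow from the same template, differing only in whether one bounds $|f+g|$ or $|fg|$ on the exceptional set. A minor point to verify is finiteness: for $f\in L_1+L_\infty$ one has $d_f(s)<\infty$ for all sufficiently large $s$, so that $f^*(t)<\infty$ for every $t>0$ and all the infima above are realised by finite values.
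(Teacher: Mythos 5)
The paper does not actually prove this proposition; it is simply quoted from \cite{K-P-S}, Ch.~II, \S 2, so there is no in-text argument to compare yours against. Your proof is correct and is the standard self-contained one: everything is routed through the distribution function $d_f(s)=m(\{|f|>s\})$, whose right-continuity gives the key lemma $d_f(f^*(t))\leqslant t$, after which (i)--(iii) are immediate and (iv), (v) follow from the elementary inclusions $\{|f+g|>\lambda+\mu\}\subseteq\{|f|>\lambda\}\cup\{|g|>\mu\}$ and $\{|fg|>\lambda\mu\}\subseteq\{|f|>\lambda\}\cup\{|g|>\mu\}$ with $\lambda=f^*(t)$, $\mu=g^*(s)$. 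The one place to tighten is your finiteness remark: to get $f^*(t)<\infty$ for \emph{every} $t>0$ it is not enough that $d_f(s)<\infty$ for large $s$; you need $d_f(s)\to 0$ as $s\to\infty$, since otherwise the set $\{s>0: d_f(s)\leqslant t\}$ could be empty for small $t$. For $f=f_1+f_\infty$ with $f_1\in L_1$ and $f_\infty\in L_\infty$ this follows from Chebyshev's inequality, because $d_f(s)\leqslant m(\{|f_1|>s-\|f_\infty\|_\infty\})\leqslant \|f_1\|_1/(s-\|f_\infty\|_\infty)$ for $s>\|f_\infty\|_\infty$. With that small repair the argument is complete.
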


Let $l_\infty$ be the Banach lattice of all bounded real-valued
sequences $\xi:=\{\xi_n\}_{n=1}^\infty$ equipped with the norm
$\|\xi\|_\infty=\sup\limits_{n\geqslant 1} |\xi_n|$. For each
$\xi=\{\xi_n\}_{n=1}^\infty\in l_\infty$ the function
$f_\xi(t):=\sum\limits_{n=1}^\infty \xi_n\chi_{[n-1,n)}(t),t>0$ is
contained in $L_\infty(0,\infty)$. For the decreasing
rearrangement $f_\xi^*$, we obviously have
$f_\xi^*(t)=\sum\limits_{n=1}^\infty \xi_n^*\chi_{[n-1,n)}(t),
t>0$, where $\xi^*:=\{\xi_n^*\}_{n=1}^\infty$ is  the decreasing
rearrangement of the sequence $\{|\xi_n|\}_{n=1}^\infty$.
%
%
By Proposition \ref{rearrag} (i),(ii) we have
$\xi^*\leqslant\eta^*$ for $\xi,\eta\in l_\infty$ with
$|\xi|\leqslant|\eta|$, and $(\alpha\xi)^*=|\alpha|\xi^*$,
$\alpha\in \mathbb{R}$.

A linear subspace $\{0\}\neq E\subset l_\infty$ is said to be
solid rearrangement-invariant, if for every $\eta\in E$ and every
$\xi\in l_\infty$ the assumption $\xi^*\leqslant\eta^*$ implies
that $\xi\in E$. Every solid rearrangement-invariant space $E$
contains the space $c_{00}$ of all finitely supported sequences
from $c_0$. If  $E$ contains an element
$\{\xi_n\}_{n=1}^\infty\notin c_0$, then $E=l_\infty$. Thus, for
any solid rearrangement-invariant space $E\neq l_\infty$ the
embeddings  $c_{00}\subset E\subset c_0$ hold.

A solid rearrangement-invariant space $E$ equipped with a complete
quasi-norm (norm) $\|\cdot\|_E$ is called symmetric quasi-Banach
(Banach) sequence space, if

1) $\|\xi\|_E\leqslant\|\eta\|_E$, provided $\xi^*\leqslant\eta^*,
\xi,\eta\in E$;

2) $\|\{1,0,0,\dots\}\|_E=1$.

The inequality $\|a\xi\|_E\leqslant\|a\|_\infty\|\xi\|_E$ for all
$a\in l_\infty, \xi\in E$  immediately follows from Proposition
\ref{rearrag} (iii). In particular, if $E=l_\infty$, then the norm
$\|\cdot\|_E$ is equivalent to $\|\cdot\|_\infty$; for example,
this is the case for any Lorentz space $(l_\psi,\|\cdot\|_\psi)$,
where $\psi\colon[0,\infty)\to \mathbb{R}$ is an arbitrary
nonnegative increasing concave function with the properties
$\psi(0)=0,\psi(+0)\neq
0,\lim\limits_{t\rightarrow\infty}\psi(t)<\infty$ (see details in
\cite{K-P-S}, Ch.\setcounter{gl}{2}\Roman{gl}, \S 5).

The spaces $(c_0,\|\cdot\|_\infty), (l_p,\|\cdot\|_p), 1\leqslant
p<\infty$ (respectively, $(l_p,\|\cdot\|_p)$ for $0<p<1$), where
$$l_p=\bigl\{\{\xi_n\}_{n=1}^\infty\in c_0: \|\{x_n\}\|_p
=\bigl(\sum\limits_{n=1}^\infty|\xi_n|^p\bigl)^{\frac{1}{p}}<\infty\bigl\}$$
are examples of the classical symmetric Banach (respectively,
quasi-Banach) sequence spaces in $c_0$.

Let $(E,\|\cdot\|_E)$ be a symmetric quasi-Banach sequence space.
For every $\xi=\{\xi_n\}_{n=1}^\infty\in E, m\in\mathbb{N},$ we
set
\begin{gather*}
\sigma_m(\xi)=(\underbrace{\xi_1,\dots,\xi_1}_{\mbox{$m$ times}},\underbrace{\xi_2,\dots,\xi_2}_{\mbox{$m$ times}},\dots),
\\
\eta^{(1)}=(\xi_1,\underbrace{0,\dots,0}_{\mbox{$m-1$ times}},\xi_2,\underbrace{0,\dots,0}_{\mbox{$m-1$ times}},\dots),
\\
\eta^{(2)}=(0,\xi_1,\underbrace{0,\dots,0}_{\mbox{$m-2$ times}},0,\xi_2,\underbrace{0,\dots,0}_{\mbox{$m-2$ times}},\dots),
\\
\dots,
\\
\eta^{(m)}=(\underbrace{0,\dots,0}_{\mbox{$m-1$ times}},\xi_1,\underbrace{0,\dots,0}_{\mbox{$m-1$ times}},\xi_2,\dots).
\end{gather*}

Since $(\eta^{(1)})^*=(\eta^{(2)})^*=\dots=(\eta^{(m)})^*=\xi^*\in
E$, it follows $\eta^{(1)},\dots,\eta^{(m)}\in E$. Consequently,
$\sigma_m(\xi)=\eta^{(1)}+\eta^{(2)}+\dots+\eta^{(m)}\in E,$ i.e.
$\sigma_m$ is a linear operator from $E$ to $E$. In addition, we
have
$\|\sigma_m(\xi)\|_E=\|\eta^{(1)}+\eta^{(2)}+\dots+\eta^{(m)}\|_E\leqslant
C(\|\eta^{(1)}\|_E+\|\eta^{(2)}+\eta^{(3)}+\dots+\eta^{(m)}\|_E)\leqslant
C(\|\eta^{(1)}\|_E+C(\|\eta^{(2)}\|_E+\|\eta^{(3)}+\dots+\eta^{(m)}\|_E))\leqslant
(C+C^2+\dots+C^{m-1})\|\xi\|_E$, where  $C$ is the modulus of
concavity of the quasi-norm $\|\cdot\|_E$, in particular
$\|\sigma_m\|_{\mathcal{B}(E,E)}\leqslant C+C^2+\dots+C^{m-1}$ for
all $m\in\mathbb{N}$.

\begin{claim}\label{sigma}
The inequalities $$(\xi+\eta)^*\leqslant \sigma_2(\xi^*+\eta^*),
(\xi\eta)^*\leqslant \sigma_2(\xi^*\eta^*)$$ hold for all
$\xi=\{\xi_n\}_{n=1}^\infty,\eta=\{\eta_n\}_{n=1}^\infty\in
l_\infty$.
\end{claim}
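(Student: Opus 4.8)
The plan is to reduce each of the two inequalities to a translation of the corresponding rearrangement inequalities for functions from Proposition \ref{rearrag}, using the standard dictionary between a bounded sequence $\xi=\{\xi_n\}_{n=1}^\infty$ and its associated step function $f_\xi(t)=\sum_{n=1}^\infty \xi_n\chi_{[n-1,n)}(t)$ on $(0,\infty)$. Recall from the discussion preceding the statement that $f_\xi^*(t)=\sum_{n=1}^\infty \xi_n^*\chi_{[n-1,n)}(t)$, so that rearrangement of sequences is faithfully mirrored by rearrangement of these step functions. The first observation I would record is that for any sequence $\zeta\in l_\infty$ one has the pointwise relation $f_{\sigma_2(\zeta)}(t)=\zeta^*_{\lceil t/2\rceil}$-type behaviour; more precisely, $f_{\sigma_2(\zeta^*)}$ is the step function that repeats each entry of $\zeta^*$ twice, so that $\bigl(f_{\sigma_2(\zeta^*)}\bigr)(t)=\zeta^*_{\lfloor t/2\rfloor+1}$ on the appropriate intervals. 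The point of the doubling operator $\sigma_2$ is exactly to absorb the shift-by-one ($t+s$ with $t=s=1$) that appears on the right-hand sides of Proposition \ref{rearrag}(iv),(v).

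For the additive inequality, I would fix $n\in\mathbb{N}$ and apply Proposition \ref{rearrag}(iv) to $f=f_\xi$, $g=f_\eta$ with the choice $t=s=n-1$ (or with arguments chosen in $[n-1,n)$), obtaining
\begin{equation*}
(\xi+\eta)^*_{2n-1}\leqslant (f_\xi+f_\eta)^*(2n-2)\leqslant f_\xi^*(n-1)+f_\eta^*(n-1)=\xi_n^*+\eta_n^*.
\end{equation*}
Translating back to sequences, the value $\xi_n^*+\eta_n^*$ is precisely the $(2n-1)$-st and $(2n)$-th entry of $\sigma_2(\xi^*+\eta^*)$, and comparing the decreasing rearrangement of $\xi+\eta$ index by index against the doubled sequence $\sigma_2(\xi^*+\eta^*)$ yields $(\xi+\eta)^*\leqslant\sigma_2(\xi^*+\eta^*)$ coordinatewise. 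The multiplicative inequality is handled identically, replacing Proposition \ref{rearrag}(iv) by (v): for $fg\in L_1+L_\infty$ one has $(f_\xi f_\eta)^*(2n-2)\leqslant f_\xi^*(n-1)f_\eta^*(n-1)$, and since $\xi\eta\in l_\infty$ with $f_{\xi\eta}=f_\xi f_\eta$, the same index-shifting argument gives $(\xi\eta)^*\leqslant\sigma_2(\xi^*\eta^*)$.

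The main technical obstacle I anticipate is purely bookkeeping: one must verify that the continuous-parameter estimate at argument $2n-2$ correctly controls the discrete entry indexed $2n-1$ (equivalently, that the step function $f_{(\xi+\eta)^*}$ on the interval $[2n-2,2n-1)$ takes the value $(\xi+\eta)^*_{2n-1}$), and that this single bound governs both of the two repeated entries $2n-1$ and $2n$ produced by $\sigma_2$. Because the decreasing rearrangement $f^*$ is non-increasing, the bound obtained at the left endpoint $2n-2$ dominates the value on all of $[2n-2,2n)$, which covers exactly the two slots occupied by the $n$-th repeated entry of $\sigma_2(\xi^*+\eta^*)$; this monotonicity is what makes the doubling by $\sigma_2$ (rather than a larger $\sigma_m$) sufficient. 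Once this correspondence between indices $n$ and $2n-1,2n$ is set up carefully, both inequalities follow at once, so I would state the index translation as a short lemma-free computation and then invoke Proposition \ref{rearrag}(iv),(v) directly.
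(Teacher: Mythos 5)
Your proposal is correct and follows essentially the same route as the paper: both pass to the step functions $f_\xi$, apply Proposition \ref{rearrag}(iv),(v) with equal arguments $t=s$, and observe that halving the argument of $f^*_{\xi+\eta}$ (respectively $f^*_{\xi\eta}$) is exactly the effect of the doubling operator $\sigma_2$ on the corresponding sequence. The paper phrases this as the single identity $f^*_{\xi+\eta}(2t)\leqslant f^*_\xi(t)+f^*_\eta(t)=\sum_n(\sigma_2(\xi^*+\eta^*))_n\chi_{[n-1,n)}(2t)$ for all $t>0$, whereas you evaluate at the discrete points $2n-2$ and $n-1$ and use monotonicity of $f^*$ to cover both repeated slots; these are the same argument.
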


\begin{proof} Since $f_{\xi+\eta}(t)=\sum\limits_{n=1}^\infty(\xi_n+\eta_n)\chi_{[n-1,n)}(t)=f_\xi(t)+f_\eta(t), t>0$,
 we have by Proposition \ref{rearrag} (iv) that
\begin{gather*}
\sum\limits_{n=1}^\infty(\xi_n+\eta_n)^*\chi_{[n-1,n)}(2t)=f_{\xi+\eta}^*(2t)=(f_\xi+f_\eta)^*(2t)\leqslant\\
\leqslant f_\xi^*(t)+f_\eta^*(t)=\sum\limits_{n=1}^\infty(\xi_n^*+\eta_n^*)\chi_{[n-1,n)}(t)= \sum\limits_{n=1}^\infty(\sigma_2(\xi^*+\eta^*))_n\chi_{[n-1,n)}(2t)
\end{gather*}
for all $t>0$, where
$\{(\sigma_2(\xi^*+\eta^*))_n\}_{n=1}^\infty=\sigma_2(\xi^*+\eta^*)$.
In other words, $(\xi+\eta)^*\leqslant\sigma_2(\xi^*+\eta^*)$. The
proof of the inequality
$(\xi\eta)^*\leqslant\sigma_2(\xi^*\eta^*)$ is very similar (one
needs to use Proposition \ref{rearrag} (v)) and is therefore
omitted.
\end{proof}

For a symmetric quasi-Banach sequence space $(E,\|\cdot\|_E)$, we
set
$$C_E:=\{x\in\mathcal{K}(H): \{s_n(x)\}_{n=1}^\infty\in E\},\quad \|x\|_{C_E}:=\|s_n(x)\|_E, x\in C_E.$$

If $E=l_p$ (respectively, $E=c_0$) then
$C_{l_p}=C_p,\|\cdot\|_{C_{l_p}}=\|\cdot\|_{C_p}, 0<p<\infty$
(respectively,
$C_{c_0}=\mathcal{K}(H),\|\cdot\|_{C_{c_0}}=\|\cdot\|_{\mathcal{B}(H)}$).

A quasi-Banach vector sublattice $(E,\|\cdot\|_E)$ in $l_\infty$
is said to be $p$-convex, $0<p<\infty$, if there is a constant
$M$, so that
\begin{equation}\label{p-convex}
\Bigl\|\Bigl(\sum\limits_{i=1}^n|x_i|^p\Bigl)^{\frac{1}{p}}\Bigl\|_E\leqslant
M\Bigl(\sum\limits_{i=1}^n\|x_i\|_E^p\Bigl)^{\frac{1}{p}}
\end{equation}
for every finite collection $\{x_i\}_{i=1}^n\subset E, n\in\mathbb{N}$.

If the estimate (\ref{p-convex}) holds for elements from a
symmetric quasi-Banach ideal $(\mathcal{I},\|\cdot\|_\mathcal{I})$
of compact operators from $\mathcal{B}(H)$, then the ideal
$(\mathcal{I},\|\cdot\|_\mathcal{I})$ is said to be $p$-convex. As
already stated in Theorem \ref{K_S}, for every symmetric Banach
(respectively, symmetric $p$-convex quasi-Banach, $0<p<\infty$)
sequence space $E$ in $c_0$ the couple $(C_E,\|\cdot\|_{C_E})$ is
a symmetric Banach  (respectively, $p$-convex symmetric
quasi-Banach) ideal of compact operators in $\mathcal{B}(H)$.

Thus, for every symmetric Banach ($p$-convex quasi-Banach)
sequence space $(E,\|\cdot\|_E)$ the corresponding symmetric
Banach ($p$-convex quasi-Banach) ideal $(C_E,\|\cdot\|_{C_E})$ of
compact operators from $\mathcal{B}(H)$ is naturally constructed.
This extends the classical Calkin correspondence \cite{Calkin}.

Conversely, if $(\mathcal{I},\|\cdot\|_\mathcal{I})$ is a
symmetric quasi-Banach ideal $(\mathcal{I},\|\cdot\|_\mathcal{I})$
of compact operators from $\mathcal{B}(H)$, then it is of the form
$C_E$ with $\|\cdot\|_\mathcal{I}=\|\cdot\|_{C_E}$ for the
corresponding symmetric quasi-Banach sequence space
$(E_\mathcal{I},\|\xi\|_{E_\mathcal{I}})$. The definition of the
latter space is given below.

Denote by $E_\mathcal{I}$ the set of all $\xi\in c_0$, for which there exists some
$x\in\mathcal{I}$,  such that $\xi^*=\{s_n(x)\}_{n=1}^\infty$. For $\xi\in E_{\mathcal{I}}$ with
$\xi^*=\{s_n(x)\}_{n=1}^\infty,x\in\mathcal{I}$ set $\|\xi\|_{E_\mathcal{I}}=\|x\|_\mathcal{I}$.

Fix an orthonormal set $\{e_n\}_{n=1}^\infty$ in $H$ and for every
 $\xi=\{\xi_n\}_{n=1}^\infty\in c_0$ consider the diagonal operator $x_\xi\in\mathcal{K}(H)$ defined as follows
$$x_\xi(\varphi)=\sum\limits_{n=1}^\infty\xi_nc_n(\varphi)e_n,$$ where $c_n(\varphi)=(\varphi,e_n), \varphi\in H$.
If $\xi\in E_\mathcal{I}$, then $\xi^*=\{s_n(x)\}_{n=1}^\infty$
for some $x\in\mathcal{I}$, and due to equalities
$\{s_n(x_{\xi^*})\}_{n=1}^\infty=\{\xi_n^*\}_{n=1}^\infty=\{s_n(x)\}_{n=1}^\infty$
we have $x_{\xi^*}\in\mathcal{I}$ and
$\|x_{\xi^*}\|_\mathcal{I}=\|x\|_\mathcal{I}=\|\xi\|_{E_\mathcal{I}}$
( see Proposition \ref{sim qn} b)). Moreover, since
$\{s_n(x_\xi)\}_{n=1}^\infty=\{s_n(x_{\xi^*})\}_{n=1}^\infty$ and
$x_{\xi^*}\in\mathcal{I}$, it follows that $x_\xi\in\mathcal{I}$
and $\|\xi\|_{E_\mathcal{I}}=\|x_\xi\|_\mathcal{I}$. Thus, a
sequence $\xi\in c_0$ is contained in $E_\mathcal{I}$, if and only
if operators $x_\xi$ and $x_{\xi^*}$ are in $\mathcal{I}$, in
addition,
$\|\xi\|_{E_\mathcal{I}}=\|x_{\xi^*}\|_\mathcal{I}=\|x_\xi\|_\mathcal{I}$.
In particular, if $\eta\in c_0,\xi\in E_\mathcal{I},
\eta^*\leqslant\xi^*$, then $\eta\in E_\mathcal{I}$ and
$\|\eta\|_{E_\mathcal{I}}\leqslant\|\xi\|_{E_\mathcal{I}}$.

\begin{theorem}\label{inverse K-S} For any symmetric quasi-Banach ideal $\mathcal{I}$ of compact operators from $\mathcal{B}(H)$ the couple $(E_\mathcal{I},\|\cdot\|_{E_\mathcal{I}})$ is a symmetric quasi-Banach sequence space in $c_0$ with the modulus of concavity which does not exceed the modulus of concavity of the quasi-norm $\|\cdot\|_\mathcal{I}$,
in addition, $C_{E_\mathcal{I}}=\mathcal{I}$ and
$\|\cdot\|_{C_{E_\mathcal{I}}}=\|\cdot\|_\mathcal{I}$.
\end{theorem}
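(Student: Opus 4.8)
The plan is to transport the quasi-Banach lattice structure of $\mathcal{I}$ to $E_\mathcal{I}$ through the correspondence $\xi\mapsto x_\xi$. The discussion preceding the theorem already supplies the three facts I will lean on: the map $\xi\mapsto x_\xi$ is linear (since $x_{\alpha\xi+\beta\eta}=\alpha x_\xi+\beta x_\eta$), one has $\xi\in E_\mathcal{I}\iff x_\xi\in\mathcal{I}$ together with the norm identity $\|\xi\|_{E_\mathcal{I}}=\|x_\xi\|_\mathcal{I}$, and $E_\mathcal{I}$ is solid and rearrangement-invariant. With these in hand, the algebraic and quasi-norm axioms transfer directly. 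Linearity of $\mathcal{I}$ makes $E_\mathcal{I}$ a linear subspace of $c_0$; homogeneity of $\|\cdot\|_{E_\mathcal{I}}$ follows from that of $\|\cdot\|_\mathcal{I}$, and positive-definiteness from $x_\xi=0\iff\xi=0$. The quasi-triangle inequality is inherited: $\|\xi+\eta\|_{E_\mathcal{I}}=\|x_\xi+x_\eta\|_\mathcal{I}\leqslant C(\|x_\xi\|_\mathcal{I}+\|x_\eta\|_\mathcal{I})=C(\|\xi\|_{E_\mathcal{I}}+\|\eta\|_{E_\mathcal{I}})$, where $C$ is the modulus of concavity of $\|\cdot\|_\mathcal{I}$, which simultaneously yields the asserted bound on the modulus of concavity of $\|\cdot\|_{E_\mathcal{I}}$. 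The normalization $\|\{1,0,0,\dots\}\|_{E_\mathcal{I}}=1$ holds because $x_{\{1,0,\dots\}}$ is a one-dimensional projection, so Property 2) of a symmetric quasi-norm on $\mathcal{I}$ assigns it norm $1$.

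The main obstacle is completeness, and specifically the verification that the relevant limit is again a diagonal operator. Given a Cauchy sequence $\{\xi^{(k)}\}\subset E_\mathcal{I}$, the norm identity shows that $\{x_{\xi^{(k)}}\}$ is Cauchy in the complete space $(\mathcal{I},\|\cdot\|_\mathcal{I})$, hence $x_{\xi^{(k)}}\to x$ for some $x\in\mathcal{I}$. I must show $x$ is diagonal with respect to the fixed orthonormal set $\{e_n\}$. Since $\|\cdot\|_{\mathcal{B}(H)}\leqslant\|\cdot\|_\mathcal{I}$ on $\mathcal{I}$ by Proposition \ref{sim qn} c), the convergence also holds in operator norm. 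Then each scalar sequence $\{\xi_m^{(k)}\}_k$ is Cauchy in $\mathbb{C}$, so $\eta_m:=\lim_k\xi_m^{(k)}$ exists and $(x\varphi,e_m)=\lim_k(x_{\xi^{(k)}}\varphi,e_m)=\eta_m(\varphi,e_m)$ for all $\varphi,m$. Because the range of each $x_{\xi^{(k)}}$ lies in the closed span of $\{e_n\}$, the same holds for the operator-norm limit $x$; consequently $x=x_\eta$, and compactness of $x$ forces $\eta=\{\eta_m\}\in c_0$. Thus $\eta\in E_\mathcal{I}$ and $\|\xi^{(k)}-\eta\|_{E_\mathcal{I}}=\|x_{\xi^{(k)}}-x\|_\mathcal{I}\to 0$, establishing completeness.

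It remains to identify $C_{E_\mathcal{I}}=\mathcal{I}$ with equal quasi-norms, which I would settle using Proposition \ref{sim qn} b). If $x\in\mathcal{I}$, then $\{s_n(x)\}$ is its own decreasing rearrangement and is witnessed by $x$ itself, so $\{s_n(x)\}\in E_\mathcal{I}$ and $x\in C_{E_\mathcal{I}}$. Conversely, if $x\in C_{E_\mathcal{I}}$, set $\xi=\{s_n(x)\}=\xi^*\in E_\mathcal{I}$; then $x_\xi\in\mathcal{I}$ with $s_n(x_\xi)=s_n(x)$, and applying Proposition \ref{sim qn} b) in both directions gives $x\in\mathcal{I}$ and $\|x\|_\mathcal{I}=\|x_\xi\|_\mathcal{I}$. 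The same proposition then yields $\|x\|_{C_{E_\mathcal{I}}}=\|\{s_n(x)\}\|_{E_\mathcal{I}}=\|x_\xi\|_\mathcal{I}=\|x\|_\mathcal{I}$, completing the identification. I expect the diagonality of the limit in the completeness step to be the only genuinely delicate point; everything else is a transparent transfer of structure along $\xi\mapsto x_\xi$.
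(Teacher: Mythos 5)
Your proposal is correct and follows essentially the same route as the paper: transfer the linear and quasi-norm structure along $\xi\mapsto x_\xi$, deduce completeness of $E_\mathcal{I}$ from completeness of $\mathcal{I}$ by identifying the limit as a diagonal operator, and obtain $C_{E_\mathcal{I}}=\mathcal{I}$ with equal quasi-norms from Proposition \ref{sim qn} b). The only (cosmetic) difference is in the diagonality check, where the paper computes the compressions $p_nxp_m$ of the limit while you pass to operator-norm convergence and read off matrix entries together with the range condition; your version is, if anything, slightly more explicit about the normalization $\|\{1,0,0,\dots\}\|_{E_\mathcal{I}}=1$ and about why the limit vanishes off the closed span of $\{e_n\}$.
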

\begin{proof} If $\xi,\eta\in E_\mathcal{I}$, then $x_\xi,x_\eta\in\mathcal{I}$, hence $x_\xi+x_\eta\in\mathcal{I}$. Since
$$(x_\xi+x_\eta)(\varphi)=\sum\limits_{n=1}^\infty \xi_nc_n(\varphi)e_n+\sum\limits_{n=1}^\infty \eta_nc_n(\varphi)e_n=\sum\limits_{n=1}^\infty (\xi_n+\eta_n)c_n(\varphi)e_n=x_{\xi+\eta}(\varphi), \varphi\in H,$$
we have $x_{\xi+\eta}\in\mathcal{I}$. Consequently, $\xi+\eta\in E_\mathcal{I}$, moreover, $$\|\xi+\eta\|_{E_\mathcal{I}}=\|x_{\xi+\eta}\|_\mathcal{I}=\|x_{\xi}+x_{\eta}\|_\mathcal{I}\leqslant C(\|x_\xi\|_\mathcal{I}+\|x_\eta\|_\mathcal{I})=C(\|\xi\|_{E_\mathcal{I}}+\|\eta\|_{E_\mathcal{I}}),$$
where $C$ is the modulus of concavity of the quasi-norm $\|\cdot\|_\mathcal{I}$.

Now, let $\xi\in E_\mathcal{I}, \alpha\in\mathbb{R}$. Since $$x_{\alpha\xi}(\varphi)=\sum\limits_{n=1}^\infty \alpha\xi_nc_n(\varphi)e_n=\alpha x_\xi(\varphi), \varphi\in H,$$
we have $\alpha\xi\in E_\mathcal{I}$ and $\|\alpha\xi\|_{E_\mathcal{I}}=\|x_{\alpha\xi}\|_\mathcal{I}=\|\alpha x_\xi\|_\mathcal{I} =|\alpha|\|x_\xi\|_\mathcal{I}=|\alpha|\|\xi\|_{E_\mathcal{I}}$.

It is easy to see that $\|\xi\|_{E_\mathcal{I}}\geqslant 0$ and
$\|\xi\|_{E_\mathcal{I}}= 0\Leftrightarrow\xi=0$.

Hence, $E_\mathcal{I}$ is a solid rearrangement-invariant subspace
in $c_0$ and $\|\cdot\|_{E_\mathcal{I}}$ is a quasi-norm on
$E_\mathcal{I}$.

Let us show that $(E_\mathcal{I},\|\cdot\|_{E_\mathcal{I}})$ is a
quasi-Banach space. Let $\interleave\cdot\interleave_\mathcal{I}$
(respectively, $\interleave\cdot\interleave_{E_\mathcal{I}}$) be a
$p$-additive (respectively, $q$-additive) quasi-norm equivalent to
the quasi-norm $\|\cdot\|_\mathcal{I}$ (respectively,
$\|\cdot\|_{E_\mathcal{I}}), 0<p,q\leqslant 1$.

Let $\xi^{(k)}=\{\xi_n^{(k)}\}_{n=1}^\infty\in E_\mathcal{I}$ and $\interleave\xi^{(k)}-\xi^{(m)}\interleave^p_{E_\mathcal{I}}\rightarrow 0$ for $k,m\rightarrow\infty$. Then $\interleave x_{\xi^{(k)}}-x_{\xi^{(m)}}\interleave^q_\mathcal{I}\rightarrow 0$ for $k,m\rightarrow\infty$, i.e. $x_{\xi^{(k)}}$ is a Cauchy sequence in $(\mathcal{I},d_\mathcal{I})$, where $d_\mathcal{I}(x,y)=\interleave x-y\interleave_\mathcal{I}^q$. Since $(\mathcal{I},d_\mathcal{I})$ is a complete metric space, there exists an operator $x\in\mathcal{I}$ such that $\interleave x_{\xi^{(k)}}-x\interleave^q_\mathcal{I}\rightarrow 0$ for $k\rightarrow\infty$. If $p_n$ is the one-dimensional projection onto subspace spanned by $e_n$, then
\begin{gather*}
\xi^{(k)}p_n=p_n x_{\xi_n^{(k)}}p_n\xrightarrow{\|\cdot\|_\mathcal{I}} p_nxp_n:=\lambda_np_n,\\
0=p_nx_{\xi_n^{(k)}}p_m\rightarrow p_nxp_m, n\neq m.
\end{gather*}
Hence, $x$ is also a diagonal operator, i.e. $x=x_\xi$, where $\xi=\{\lambda_n\}_{n=1}^\infty$. Since  $x\in\mathcal{I}$ we have $\xi\in E_\mathcal{I}$, moreover, $\|\xi^{(k)}-\xi\|_{E_\mathcal{I}}^p=\|x_{\xi^{(k)}}-x_\xi\|_\mathcal{I}\rightarrow 0$  for $k\rightarrow\infty$.

Consequently, $(E_\mathcal{I},\|\cdot\|_{E_\mathcal{I}})$ is a symmetric quasi-Banach sequence space in $c_0$.

Now, let us show that $C_{E_\mathcal{I}}=\mathcal{I}$ and
$\|x\|_{C_{E_\mathcal{I}}}=\|x\|_\mathcal{I}$ for all
$x\in\mathcal{I}$. Let $x\in C_{E_\mathcal{I}}$, i.e.
$\{s_n(x)\}_{n=1}^\infty\in E_\mathcal{I}$. Hence, there exists an
operator $y\in\mathcal{I}$, such that
$s_n(x)=s_n(y),n\in\mathbb{N}$. Consequently, $x\in\mathcal{I}$,
moreover,
$\|x\|_\mathcal{I}=\|\{s_n(x)\}_{n=1}^\infty\|_{E_\mathcal{I}}=\|x\|_{C_{E_\mathcal{I}}}$.
Conversely, if $x\in\mathcal{I}$, then $\{s_n(x)\}_{n=1}^\infty\in
E_\mathcal{I}$ and therefore $x\in C_{E_\mathcal{I}}$.
\end{proof}

The definition of symmetric Banach ($p$-convex quasi-Banach) ideal
$(C_E,\|\cdot\|_{C_E})$ of compact operators from $\mathcal{B}(H)$
jointly with Theorem \ref{inverse K-S} implies the following
corollary:

\begin{corollary}\label{cor_K-S} Let $(E,\|\cdot\|_E)$ be a symmetric Banach
($p$-convex quasi-Banach) sequence space from $c_0$. Then $E_{C_E}=E$ and $\|\cdot\|_{E_{C_E}}=\|\cdot\|_E$
\end{corollary}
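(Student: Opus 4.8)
The plan is to read off both equalities directly from the definitions, using Theorem \ref{K_S} to guarantee that the objects involved are of the correct type. The hypothesis that $(E,\|\cdot\|_E)$ is symmetric Banach or $p$-convex symmetric quasi-Banach is used precisely here: by Theorem \ref{K_S} it ensures that $(C_E,\|\cdot\|_{C_E})$ is a genuine symmetric quasi-Banach ideal of compact operators, which is what licenses forming the associated sequence space $E_{C_E}$ together with its quasi-norm in the sense of Theorem \ref{inverse K-S}.

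First I would establish the set equality $E_{C_E}=E$ by unwinding the definitions. A sequence $\xi\in c_0$ lies in $E_{C_E}$ precisely when there is an operator $x\in C_E$ with $\xi^*=\{s_n(x)\}_{n=1}^\infty$, and by the definition of $C_E$ the membership $x\in C_E$ means exactly $\{s_n(x)\}_{n=1}^\infty\in E$. Hence $\xi\in E_{C_E}$ if and only if $\xi^*\in E$. Since $E$ is solid and rearrangement invariant and $(\xi^*)^*=\xi^*$, the condition $\xi^*\in E$ is equivalent to $\xi\in E$, giving both inclusions simultaneously. For the direction $E\subseteq E_{C_E}$ one can be fully explicit: given $\xi\in E$, the diagonal operator $x_{\xi^*}$ satisfies $\{s_n(x_{\xi^*})\}_{n=1}^\infty=\xi^*\in E$, so $x_{\xi^*}\in C_E$ and therefore $\xi\in E_{C_E}$.

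Next I would verify that the quasi-norms coincide. For $\xi\in E=E_{C_E}$ the number $\|\xi\|_{E_{C_E}}$ is by definition $\|x\|_{C_E}$ for any $x\in C_E$ with $\xi^*=\{s_n(x)\}_{n=1}^\infty$, and this value is independent of the choice of $x$ by Proposition \ref{sim qn} b), since operators with equal singular numbers have equal quasi-norm. Choosing $x=x_{\xi^*}$ and using the definition of $\|\cdot\|_{C_E}$ gives $\|\xi\|_{E_{C_E}}=\|x_{\xi^*}\|_{C_E}=\|\{s_n(x_{\xi^*})\}_{n=1}^\infty\|_E=\|\xi^*\|_E=\|\xi\|_E$, the final equality holding because $E$ is symmetric.

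I do not anticipate a genuine obstacle: once Theorem \ref{K_S} provides that $C_E$ is a legitimate symmetric quasi-Banach ideal, the corollary is just the statement that the assignments $\mathcal{I}\mapsto E_\mathcal{I}$ and $E\mapsto C_E$ are mutually inverse, and the verification reduces to bookkeeping with singular numbers, rearrangements and the solidity of $E$. The only points needing care are the well-definedness of $\|\cdot\|_{E_{C_E}}$ and the passage from $\xi^*\in E$ to $\xi\in E$, both immediate from the structural properties already recorded. As an alternative to the explicit computation, the set equality $E_{C_E}=E$ can be obtained abstractly: Theorem \ref{inverse K-S} shows that $E\mapsto C_E$ is a left inverse of the Calkin bijection $\mathcal{I}\leftrightarrow E_\mathcal{I}$, and a left inverse of a bijection is automatically a two-sided inverse, whence $E_{C_E}=E$.
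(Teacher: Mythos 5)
Your proof is correct and follows essentially the same route as the paper: both establish $E\subseteq E_{C_E}$ and the norm equality by passing to the diagonal operator $x_{\xi^*}$ and computing $\|\xi\|_{E_{C_E}}=\|x_{\xi^*}\|_{C_E}=\|\xi^*\|_E=\|\xi\|_E$, with the reverse inclusion obtained by the same unwinding of definitions. Your added remarks on well-definedness of $\|\cdot\|_{E_{C_E}}$ and the abstract left-inverse argument for the set equality are sound but not needed beyond what the paper records.
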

\begin{proof} If $\xi\in E$, then $x_{\xi^*}\in C_E$, and due to the equality
$\{s_n(x_{\xi^*})\}_{n=1}^\infty=\xi^*$, we have $\xi\in E_{C_E}$
and $\|\xi\|_{E_{C_E}}=\|x_{\xi^*}\|_{C_E}=\|\xi^*\|_E=\|\xi\|_E$.
The converse inclusion $E_{C_E}\subset E$ may be proven similarly.
\end{proof}

Let $G,F$ be solid rearrangement-invariant  spaces in $c_0$. It is
easy to see that $G$ and $F$ are ideals in the algebra $l_\infty$,
in particular, it follows from the assumptions
$|\xi|\leqslant|\eta|,\xi\in l_\infty,\eta\in G$  that $\xi\in G$,
i.e. $G$ and $F$ are solid linear subspaces in $l_\infty$. We
define $F$-dual space $F:G$ of $G$ by setting
$$F:G=\{\xi\in l_\infty:(\forall \eta\in G) \quad \xi\eta\in F\ \}.$$
It is clear that $F:G$ is an ideal in $l_\infty$ containing
$c_{00}$. If $G\subset F$, then $F:G=l_\infty$, in particular,
$l_\infty:G=l_\infty$ for any solid rearrangement-invariant space
$G$. However, if $G\nsubseteq F$, then $F:G\neq l_\infty$.

\begin{claim}\label{pr_subsetc0} If $F:G\neq l_\infty$, then $F:G\subset c_0$.
\end{claim}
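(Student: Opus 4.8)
The plan is to argue by contradiction and to exploit the equivalence recorded just before the statement, namely that $F:G=l_\infty$ precisely when $G\subseteq F$. Since we are given $F:G\neq l_\infty$, this means $G\nsubseteq F$, so I may fix some $\eta\in G\setminus F$. Replacing $\eta$ by its decreasing rearrangement $\eta^*$ (which again lies in $G\setminus F$, because both $G$ and $F$ are solid and rearrangement-invariant), I may assume $\eta=\eta^*\geq 0$ is non-increasing and $\eta\notin F$. This $\eta$ will serve as the target of a contradiction.

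Now suppose, towards a contradiction, that $F:G\nsubseteq c_0$, i.e. there is $\xi\in F:G$ with $\xi\notin c_0$. Because $F:G$ is solid, I may replace $\xi$ by $|\xi|$ and assume $\xi\geq 0$. Since $\xi^*$ is non-increasing and does not tend to $0$, its limit $c:=\lim_n \xi_n^*=\inf_n\xi_n^*$ is strictly positive, whence $\xi_n^*\geq c$ for every $n$ and the index set $A=\{m:\xi_m\geq c\}$ is infinite; write $A=\{m_1<m_2<\cdots\}$.

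The decisive step is to manufacture from $\eta$ a single multiplier $\theta\in G$ for which the product $\xi\theta$ reproduces $\eta$ up to rearrangement. I would set $\theta_{m_k}:=\eta_k/\xi_{m_k}$ for $k\in\mathbb{N}$ and $\theta_j:=0$ for $j\notin A$. Since $\xi_{m_k}\geq c$, one has $0\leq\theta_{m_k}\leq\eta_k/c$, and because the values $\eta_1\geq\eta_2\geq\cdots$ are already non-increasing, the decreasing rearrangement of $\theta$ satisfies $\theta^*\leq c^{-1}\eta^*=c^{-1}\eta$; solidity and rearrangement-invariance of $G$ then give $\theta\in G$. On the other hand $\xi\theta$ equals $\eta_k$ at the position $m_k$ and $0$ elsewhere, so $(\xi\theta)^*=\eta^*=\eta$. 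Since $\xi\in F:G$ and $\theta\in G$ we obtain $\xi\theta\in F$, and the rearrangement-invariance of $F$ forces $\eta=(\xi\theta)^*\in F$, contradicting $\eta\notin F$. Hence no such $\xi$ exists and $F:G\subseteq c_0$.

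I expect the only genuinely delicate point to be the verification that the constructed $\theta$ lies in $G$ and that $(\xi\theta)^*=\eta^*$; both reduce to the elementary observation that spreading the already non-increasing sequence $\{\eta_k\}$ across the increasingly ordered positions $\{m_k\}$ and filling with zeros leaves its decreasing rearrangement unchanged. Everything else is a routine application of the solidity and rearrangement-invariance of $F$ and $G$, together with the bound $\theta_{m_k}\leq\eta_k/c$ that keeps $\theta$ in $l_\infty$.
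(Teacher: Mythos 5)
Your overall strategy is sound and genuinely different from the paper's. The paper fixes a putative $\xi\in(F:G)\setminus c_0$, passes to $|\xi|$, extracts a subsequence $|\xi_{n_k}|\to\alpha>0$, and shows (by spreading an arbitrary $\zeta\in G$ over the positions $n_k$) that the sequence $\{|\xi_{n_k}|\}_k$ again lies in $F:G$; since that sequence is bounded away from zero and $F:G$ is a solid ideal in $l_\infty$, this forces $F:G=l_\infty$, a contradiction. You instead run the contradiction through a fixed witness $\eta\in G\setminus F$ (available because $F:G\neq l_\infty$ implies $G\nsubseteq F$) and manufacture a single multiplier $\theta\in G$ with $(\xi\theta)^*=\eta^*$, concluding $\eta\in F$. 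Both arguments hinge on the same elementary observation that planting a sequence on a subsequence of indices and filling with zeros does not change its decreasing rearrangement; yours is arguably more direct, while the paper's has the side benefit of exhibiting explicitly why $F:G$ would have to be all of $l_\infty$.

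There is, however, one concrete step that fails as written: with $c=\lim_n\xi_n^*=\inf_n\xi_n^*$, the set $A=\{m:\xi_m\geq c\}$ need not be infinite --- it can even be empty. For example, $\xi_m=1-(m+1)^{-1}$ is a nonnegative sequence outside $c_0$ with $\xi_n^*\equiv 1$, so $c=1$, yet no term satisfies $\xi_m\geq 1$. The condition $\xi_n^*\geq c$ for all $n$ only guarantees that $\{m:\xi_m>c'\}$ is infinite for every $c'<c$. The repair is immediate: fix any $0<c'<c$ (say $c'=c/2$), let $A=\{m:\xi_m>c'\}$, and run your construction with the bound $\theta_{m_k}=\eta_k/\xi_{m_k}\leq\eta_k/c'$, which still gives $\theta^*\leq (c')^{-1}\eta^*$ and hence $\theta\in G$. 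With that one-line patch the rest of your argument --- the verification that $\theta\in G$, the identity $(\xi\theta)^*=\eta^*$, and the final appeal to the solidity and rearrangement-invariance of $F$ --- is correct.
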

\begin{proof} Suppose that there exists  $\xi=\{\xi_n\}_{n=1}^\infty\in(F:G),\xi\notin c_0$.
Let
$\alpha_n=\mathrm{sign}\xi_n,n\in\mathbb{N},\eta=\{\eta_n\}_{n=1}^\infty\in
G$. Obviously, $\{\alpha_n\eta_n\}_{n=1}^\infty\in G$ and hence,
$|\xi|\eta=\{\xi_n\alpha_n\eta_n\}_{n=1}^\infty\in F$ for all
$\eta\in G$, that is $|\xi|\in (F:G)$,and, in addition,
$|\xi|\notin c_0$. This implies that there exists a subsequence
$0\neq|\xi_{n_k}|\rightarrow\alpha>0$ for $k\rightarrow\infty$.
Consider a sequence $\eta=\{\eta_k\}_{k=1}^\infty$ from
$l_\infty\setminus c_0$ such that $\eta_k=|\xi_{n_k}|$ and show
that $\eta\in F:G$.

For every $\zeta=\{\zeta_n\}_{n=1}^\infty\in G$ define the sequence
$a_\zeta=\{a_n\}_{n=1}^\infty$ such that $a_{n_k}=\zeta_k$ and $a_n=0$,
if $n\neq n_k,k\in\mathbb{N}$. Since $a_\zeta^*=\zeta^*$,
we have $a_\zeta\in G$, and therefore
$\eta\zeta=\{|\xi_{n_k}|\zeta_k\}_{k=1}^\infty=\{|\xi_n|a_n\}_{n=1}^\infty=|\xi|a_\zeta\in F$
for all $\zeta\in G$. Consequently, $\eta=\{\eta_n\}_{n=1}^\infty\in F:G$,
moreover, $\eta_n\geqslant \alpha$ for some $\alpha>0$ and all $n\in\mathbb{N}$.
Since $F:G$ is an ideal in $l_\infty$, it follows that $F:G$ is a solid linear subspace in
$l_\infty$, containing the sequence $\{\eta_n\}_{n=1}^\infty$ with
$\eta_n\geqslant\alpha>0,n\in\mathbb{N}$, that implies $F:G=l_\infty$.
\end{proof}

\begin{claim}\label{lem}If $F:G\neq l_\infty$, then
$F:G=\{\xi\in c_0:  \quad \xi^*\eta^*\in F, \quad \forall \eta\in G\}.$
\end{claim}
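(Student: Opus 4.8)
The plan is to prove the two inclusions $\subseteq$ and $\supseteq$ separately. Set $S:=\{\xi\in c_0: \xi^*\eta^*\in F \ \forall \eta\in G\}$; we must show $F:G=S$. Note first that by Proposition \ref{pr_subsetc0}, since $F:G\neq l_\infty$, we already have $F:G\subseteq c_0$, so both sides live in $c_0$ and we may compare them freely there.

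For the inclusion $S\subseteq F:G$, take $\xi\in S$ and an arbitrary $\eta\in G$. I want to conclude $\xi\eta\in F$ from the hypothesis $\xi^*\eta^*\in F$. The natural tool is Proposition \ref{sigma}, which gives $(\xi\eta)^*\leqslant\sigma_2(\xi^*\eta^*)$. Since $\xi^*\eta^*\in F$ and $F$ is symmetric (hence $\sigma_2$ maps $F$ into $F$, as established in the discussion preceding Proposition \ref{sigma}), we get $\sigma_2(\xi^*\eta^*)\in F$; then solidity of $F$ together with $(\xi\eta)^*\leqslant\sigma_2(\xi^*\eta^*)$ forces $\xi\eta\in F$. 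As $\eta\in G$ was arbitrary, $\xi\in F:G$.

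For the reverse inclusion $F:G\subseteq S$, take $\xi\in F:G$ (so $\xi\in c_0$) and a fixed $\eta\in G$; the goal is $\xi^*\eta^*\in F$. The obstacle here is that $\xi\eta\in F$ controls the pointwise product in the given order of coordinates, whereas $\xi^*\eta^*$ rearranges both factors into decreasing order simultaneously, and in general $(\xi\eta)^*$ need not dominate $\xi^*\eta^*$. I expect this to be the main difficulty. The way I would handle it is to pass to a common rearranging permutation: choose a permutation (or, more robustly, the construction underlying the support-relocation argument in the proof of Proposition \ref{pr_subsetc0}) that carries $\eta$ to a sequence $\tilde\eta$ with $\tilde\eta=\eta^*$ while relocating the coordinates of $\xi$ to matching positions, so that the relevant product realizes $\xi^*\eta^*$ up to a rearrangement. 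Concretely, since $\eta^*\in G$ (as $G$ is rearrangement-invariant), it suffices to prove $\xi^*\eta^*\in F$ assuming both $\xi,\eta$ are already nonnegative and decreasing; the general case reduces to this by replacing $\eta$ with $\eta^*\in G$ and $\xi$ with $\xi^*$ and using that $\xi^*\in F:G$ whenever $\xi\in F:G$ (which follows because $F:G$ is solid and rearrangement-invariant as an ideal closed under the relevant rearrangements). Once $\xi,\eta$ are both decreasing and nonnegative, $\xi\eta=\xi^*\eta^*$ coordinatewise, and the hypothesis $\xi\eta\in F$ is exactly $\xi^*\eta^*\in F$, completing the inclusion.

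The cleanest organization is therefore: first reduce, in the $\supseteq$ direction, to decreasing nonnegative sequences by invoking rearrangement-invariance of $G$ and the stability of $F:G$ under decreasing rearrangement; then observe the two sides coincide on such sequences; and finally handle $\subseteq$ via the $\sigma_2$-domination of Proposition \ref{sigma} combined with solidity of $F$. The key external inputs are Proposition \ref{sigma} (the submajorization by $\sigma_2$), the boundedness of $\sigma_2$ on $F$, solidity and rearrangement-invariance of $F$ and $G$, and Proposition \ref{pr_subsetc0} to locate everything inside $c_0$.
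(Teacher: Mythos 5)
Your overall plan coincides with the paper's: the inclusion $S\subseteq F:G$ is handled exactly as in the paper (Proposition \ref{sigma} together with the fact that $\sigma_2$ preserves $F$ and that $F$ is solid and rearrangement-invariant), and that half is fine. The problem is in the reverse inclusion $F:G\subseteq S$. Your reduction to decreasing nonnegative sequences rests on the claim that $\xi\in F:G$ implies $\xi^*\in F:G$, which you justify by saying that $F:G$ is ``solid and rearrangement-invariant as an ideal closed under the relevant rearrangements.'' But rearrangement-invariance of $F:G$ is not available at this point: in the paper it is deduced as a corollary \emph{of} Proposition \ref{lem}, and it does not follow from the definition of $F:G$ alone without an argument. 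So, as written, the crucial step is circular --- and it really is the crucial step, because once $\xi^*\in F:G$ is known, the conclusion $\xi^*\eta^*\in F$ is immediate from $\eta^*\in G$ and the definition of $F:G$; the entire content of this inclusion is concentrated in precisely the claim you assert without proof.

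The repair is the permutation argument you gesture at, made explicit, and it is what the paper does. First pass from $\xi$ to $|\xi|\in F:G$ via the sign trick: with $\alpha_n=\mathrm{sign}\,\xi_n$ one has $\{\alpha_n\eta_n\}_{n=1}^\infty\in G$, so $|\xi|\eta=\{\xi_n\alpha_n\eta_n\}_{n=1}^\infty\in F$ for every $\eta\in G$. Then choose a bijection $\pi$ of $\mathbb{N}$ with $\xi^*=\{|\xi_{\pi(n)}|\}_{n=1}^\infty$, i.e. $\xi^*=U_\pi(|\xi|)$ where $U_\pi(\{\zeta_n\})=\{\zeta_{\pi(n)}\}$, and for $\eta\in G$ write $\xi^*\eta^*=U_\pi(|\xi|)\,U_\pi\bigl(U_\pi^{-1}(\eta^*)\bigr)=U_\pi\bigl(|\xi|\,U_\pi^{-1}(\eta^*)\bigr)$. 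Here $U_\pi^{-1}(\eta^*)\in G$ because $G$ (not $F:G$!) is rearrangement-invariant, hence $|\xi|\,U_\pi^{-1}(\eta^*)\in F$ because $|\xi|\in F:G$, and finally $U_\pi$ maps $F$ into $F$ because $F$ is rearrangement-invariant. Only the given rearrangement-invariance of $F$ and $G$ is used, so no circularity remains. Note also that a single permutation ``carrying $\eta$ to $\eta^*$'' will not do, since it need not simultaneously rearrange $\xi$ into $\xi^*$; the permutation must be chosen to sort $|\xi|$, and then transported to $\eta^*$ by $U_\pi^{-1}$.
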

\begin{proof} By Proposition \ref{pr_subsetc0}, we have that
$F:G\subset c_0$. Let $\xi=\{\xi_n\}_{n=1}^\infty\in c_0$ and
$\xi^*\eta^*\in F$ for all $\eta\in G$. Due to Proposition
\ref{sigma}, we have $(\xi\eta)^*\leqslant\sigma_2(\xi^*\eta^*)\in
F$, i.e. $(\xi\eta)^*\in F$. Since $F$ is a symmetric sequence
space, it follows that $\xi\eta\in F$ for all $\eta\in G$, i.e.
$\xi\in F:G$.

Conversely, suppose that $\xi=\{\xi_n\}_{n=1}^\infty\in F:G$. Let
$\alpha_n=\mathrm{sign}\xi_n,\eta=\{\eta_n\}_{n=1}^\infty\in G$.
Then $\{\alpha_n\eta_n\}_{n=1}^\infty\in G$, and therefore
$|\xi|\eta=\{\xi_n\alpha_n\eta_n\}_{n=1}^\infty\in F$ for all
$\eta\in G$, i.e. $|\xi|\in F:G\subset c_0$. Since
$|\xi|=\{|\xi_n|\}_{n=1}^\infty\in c_0$, there exists a bijection
of the set $\mathbb{N}$ of natural numbers, such that
$\xi^*=|\xi_{\pi(n)}|$. For linear bijective mapping $U_\pi\colon
l_\infty\to l_\infty$ defined by
$U_\pi(\{\eta_n\}_{n=1}^\infty)=\{\eta_{\pi(n)}\}_{n=1}^\infty$ we
have $U_\pi(\eta\zeta)=U_\pi(\eta)U_\pi(\zeta),
\bigl(U_\pi(\zeta)\bigl)^*=\zeta^*,
\bigl(U^{-1}_\pi(\zeta)\bigl)^*=\zeta^*$ for all $\zeta\in
l_\infty$, in particular, $U_\pi(E)=E$ for any solid
rearrangement-invariant space $E\subset l_\infty$. Consequently,
for all $\eta\in G$ we have
$\xi^*\eta^*=U_\pi(|\xi|)U_\pi(U_\pi^{-1}(\eta^*))=U_\pi(|\xi|U_\pi^{-1}(\eta^*))\in
F$.
\end{proof}

Propositions \ref{pr_subsetc0} and \ref{lem} imply the following
corollary.

\begin{corollary}
$F:G$ is a solid rearrangement-invariant space, moreover, if
$F:G\neq l_\infty$, then $c_{00}\subset F:G\subset c_0$.
\end{corollary}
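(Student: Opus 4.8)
The plan is to deduce both assertions directly from Propositions \ref{pr_subsetc0} and \ref{lem}, treating separately the two cases $F:G=l_\infty$ and $F:G\neq l_\infty$. Since $F:G$ contains $c_{00}$, it is in particular nonzero, so the only substantive point for the first assertion is the implication defining solid rearrangement-invariance.

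First I would establish that $F:G$ is solid rearrangement-invariant, i.e. that $\eta\in F:G$, $\xi\in l_\infty$ and $\xi^*\leqslant\eta^*$ force $\xi\in F:G$. If $F:G=l_\infty$ this is immediate, since $l_\infty$ itself is solid rearrangement-invariant. If $F:G\neq l_\infty$, then Proposition \ref{lem} supplies the characterization $F:G=\{\xi\in c_0:\ \xi^*\zeta^*\in F\ \forall\zeta\in G\}$, whose defining condition depends on $\xi$ only through $\xi^*$. Since $\eta\in c_0$ and $\xi^*\leqslant\eta^*$, one has $\xi\in c_0$; and for every $\zeta\in G$ the pointwise inequality $\xi^*\zeta^*\leqslant\eta^*\zeta^*$ together with $\eta^*\zeta^*\in F$ and the solidity of $F$ yields $\xi^*\zeta^*\in F$. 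Hence $\xi\in F:G$, as required.

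For the ``moreover'' part, I would simply combine two facts already at hand: the inclusion $c_{00}\subset F:G$ (noted before the statement, since $F:G$ is an ideal in $l_\infty$ containing $c_{00}$) and the inclusion $F:G\subset c_0$ supplied by Proposition \ref{pr_subsetc0} under the hypothesis $F:G\neq l_\infty$. Together these give $c_{00}\subset F:G\subset c_0$.

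I do not anticipate a genuine obstacle here; the Propositions do the real work. The only point requiring a little care is to check that the rearrangement-invariance condition can be verified purely in terms of decreasing rearrangements---this is exactly what the characterization in Proposition \ref{lem} provides---and to invoke the solidity of $F$ (equivalently, that $|a|\leqslant|b|$, $b\in F$ imply $a\in F$, which follows from $a^*\leqslant b^*$) at the single step where it is needed.
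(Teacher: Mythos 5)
Your proposal is correct and follows essentially the same route as the paper's own proof: the inclusion $c_{00}\subset F:G$ comes from the definition, the inclusion $F:G\subset c_0$ from Proposition \ref{pr_subsetc0}, and the solid rearrangement-invariance from the characterization in Proposition \ref{lem} together with the solidity of $F$ applied to $\xi^*\zeta^*\leqslant\eta^*\zeta^*$. The only cosmetic difference is that you treat the trivial case $F:G=l_\infty$ explicitly, which the paper leaves implicit.
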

\begin{proof} The definition of $F:G$ immediately implies that $F:G$ is an ideal in
$l_\infty$ and $c_{00}\subset F:G$. If $F:G\neq l_\infty$, then, due to Proposition \ref{pr_subsetc0},
we have $F:G\subset c_0$.

In the case when $F:G\neq l_\infty$, we have for any $\xi\in
c_0,\eta\in F:G,\xi^*\leqslant\eta^*,\zeta\in G$ that
$\xi^*\zeta^*\leqslant\eta^*\zeta^*\in F$ (see Proposition
\ref{lem}). Consequently, $\xi^*\zeta^*\in F$ for any $\zeta\in
G$, which implies the inclusion $\xi\in F:G$.
\end{proof}

We need some complementary properties of singular values of compact operators.
For every operator $x\in\mathcal{B}(H)$ define the decreasing rearrangement $\mu(x,t)$ of $x$ by setting
$$\mu(x,t)=\inf\{s>0:\mathrm{tr}(|x|>s)\leqslant t\}, t>0$$ (see e.g. \cite{F-K}). If $x\in\mathcal{K}(H)$, then
$$\mu(x,t)=\sum\limits_{n=1}^\infty s_n(x)\chi_{[n-1,n)}(t)=f^*_{\{s_n(x)\}_{n=1}^\infty}(t).$$
In (\cite{F-K}, Lemma 2.5 (v),(vii)) it is established that for every $x,y\in\mathcal{B}(H)$ the inequalities
\begin{gather*}
\mu(x+y,t+s)\leqslant\mu(x,t)+\mu(y,s),\\
\mu(ax,t+s)\leqslant\mu(a,t)\mu(x,s)
\end{gather*}
hold, in particular, if $x,y\in\mathcal{K}(H)$, then
\begin{gather}\label{sv}
\{s_n(x+y)\}_{n=1}^\infty\leqslant\sigma_2\bigl(\{s_n(x)+s_n(y)\}_{n=1}^\infty\bigl),
\\\label{sv2}
\{s_n(xy)\}_{n=1}^\infty\leqslant\sigma_2\bigl(\{s_n(x)s_n(y)\}_{n=1}^\infty\bigl)
\end{gather}

Let $\mathcal{I,J}$ be symmetric quasi-Banach ideals of compact
operators from $\mathcal{B}(H)$ and
$\mathcal{I}\nsubseteq\mathcal{J}$. In this case,
$\mathcal{J:I}\subset\mathcal{K}(H)$ (see Proposition
\ref{subset_K(H)}) and $E_\mathcal{I}\nsubseteq E_\mathcal{J}$
(see Theorem \ref{inverse K-S}), therefore
$E_\mathcal{J}:E_\mathcal{I}\subset c_0$ (see Proposition
\ref{pr_subsetc0}). The following proposition establishes that the
set of operators belonging to the $\mathcal{J}$-dual space
$\mathcal{J:I}$ of $\mathcal{I}$ coincides with the set
$$C_{E_\mathcal{J}:E_\mathcal{I}}=\{x\in\mathcal{K}(H):\{s_n(x)\}_{n=1}^\infty\in
E_\mathcal{J}:E_\mathcal{I}\}.$$

\begin{claim} \label{prmultip}$\mathcal{J:I}=C_{E_\mathcal{J}:E_\mathcal{I}}$.
\end{claim}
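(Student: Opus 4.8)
The plan is to prove the two inclusions $\mathcal{J:I}\subseteq C_{E_\mathcal{J}:E_\mathcal{I}}$ and $C_{E_\mathcal{J}:E_\mathcal{I}}\subseteq\mathcal{J:I}$ separately, throughout exploiting the multiplier characterization of Proposition \ref{lem}, namely $E_\mathcal{J}:E_\mathcal{I}=\{\xi\in c_0:\xi^*\eta^*\in E_\mathcal{J},\ \forall\eta\in E_\mathcal{I}\}$, together with the identifications $C_{E_\mathcal{I}}=\mathcal{I}$ and $C_{E_\mathcal{J}}=\mathcal{J}$ from Theorem \ref{inverse K-S}. Since $\mathcal{I}\nsubseteq\mathcal{J}$, Proposition \ref{subset_K(H)} guarantees $\mathcal{J:I}\subseteq\mathcal{K}(H)$, so every element of $\mathcal{J:I}$ is compact and carries a well-defined singular value sequence; this is what makes the comparison with the sequence space $E_\mathcal{J}:E_\mathcal{I}$ meaningful.

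For the inclusion $C_{E_\mathcal{J}:E_\mathcal{I}}\subseteq\mathcal{J:I}$ I would take $a\in C_{E_\mathcal{J}:E_\mathcal{I}}$, so $\{s_n(a)\}\in E_\mathcal{J}:E_\mathcal{I}$, and an arbitrary $x\in\mathcal{I}$, so $\{s_n(x)\}\in E_\mathcal{I}$. By the very definition of the multiplier space, the term-wise product $\{s_n(a)s_n(x)\}=\{s_n(a)\}\{s_n(x)\}$ lies in $E_\mathcal{J}$. Applying the bounded operator $\sigma_2$, which preserves $E_\mathcal{J}$, together with the product estimate \eqref{sv2}, gives $\{s_n(ax)\}\leqslant\sigma_2(\{s_n(a)s_n(x)\})\in E_\mathcal{J}$, and solidity of $E_\mathcal{J}$ then yields $\{s_n(ax)\}\in E_\mathcal{J}$, that is $ax\in C_{E_\mathcal{J}}=\mathcal{J}$. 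As $x\in\mathcal{I}$ was arbitrary, $a\in\mathcal{J:I}$.

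The opposite inclusion is the harder half. Given $a\in\mathcal{J:I}$, I must verify $\{s_n(a)\}\eta^*\in E_\mathcal{J}$ for every $\eta\in E_\mathcal{I}$, and the obstruction is that a term-wise product of singular value sequences is \emph{not} in general the singular value sequence of an operator product. I would therefore engineer a product realizing it exactly: writing the polar decomposition $a=u|a|$ and fixing an orthonormal sequence $\{f_n\}$ of eigenvectors of $|a|$ with $|a|f_n=s_n(a)f_n$, define the diagonal operator $x$ by $x f_n=\eta_n^* f_n$. Then $s_n(x)=\eta_n^*$, so $x\in C_{E_\mathcal{I}}=\mathcal{I}$, whence $ax\in\mathcal{J}$; and since $u^*u|a|=|a|$ we have $u^*a=|a|$, so $|a|x=u^*(ax)\in\mathcal{J}$ by the ideal property. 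Because $|a|$ and $x$ are simultaneously diagonalized on $\{f_n\}$, the operator $|a|x$ is positive with eigenvalues $s_n(a)\eta_n^*$, a decreasing sequence as a product of two nonnegative decreasing sequences, so $\{s_n(|a|x)\}=\{s_n(a)\eta_n^*\}\in E_\mathcal{J}$. By Proposition \ref{lem} this means $\{s_n(a)\}\in E_\mathcal{J}:E_\mathcal{I}$, i.e. $a\in C_{E_\mathcal{J}:E_\mathcal{I}}$.

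The main obstacle is exactly this forward direction: unlike the reverse inclusion, one cannot settle for a product inequality (which only yields domination), but must produce an operator whose singular values are precisely $\{s_n(a)\eta_n^*\}$. The device that makes this possible is the identity $|a|x=u^*(ax)$ combined with the simultaneous diagonalization of $|a|$ and $x$ on the eigenbasis of $|a|$; the small but essential point requiring care is the verification that $u^*u|a|=|a|$, so that $u^*a=|a|$ and hence $|a|x$ inherits membership in $\mathcal{J}$ from $ax$.
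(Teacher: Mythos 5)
Your proof is correct and follows essentially the same route as the paper: both inclusions rest on Proposition \ref{lem}, with the reverse inclusion obtained from the estimate \eqref{sv2} together with the $\sigma_2$-invariance and solidity of $E_\mathcal{J}$, and the forward inclusion obtained by realizing the termwise product $\{s_n(a)\eta_n^*\}$ exactly as the singular value sequence of a product of simultaneously diagonal operators. The only difference is in the bookkeeping of that forward step: the paper first replaces $a$ by the diagonal model $x_{\{s_n(a)\}}$ in a fixed orthonormal basis (using that $\mathcal{J:I}$ is a two-sided ideal of compact operators, so membership depends only on singular values), whereas you keep $a$ itself, build the test operator in the eigenbasis of $|a|$, and pass from $ax\in\mathcal{J}$ to $|a|x=u^*(ax)\in\mathcal{J}$ via the polar decomposition --- both devices accomplish the same reduction, and your verification that $u^*u|a|=|a|$ is the correct justification for that last transfer.
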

\begin{proof}
Let $a\in\mathcal{J:I}$. We claim that $a\in
C_{E_\mathcal{J}:E_\mathcal{I}}$, i.e.
$\xi=\{s_n(a)\}_{n=1}^\infty\in E_\mathcal{J}:E_\mathcal{I}$. For
any sequence $\eta\in E_\mathcal{I}$ consider operators $x_\xi$
and $x_{\eta^*}$. Since $x_\xi\in\mathcal{J:I},
x_{\eta^*}\in\mathcal{I}$, we have $x_\xi
x_{\eta^*}\in\mathcal{J}$. On the other hand, $x_\xi
x_{\eta^*}(\varphi)=\|\cdot\|_H-\lim\limits_{n\rightarrow\infty}\bigl(\sum\limits_{k=1}^n
s_k(a)c_k(x_{\eta^*}(\varphi))e_k\bigl)=\sum\limits_{n=1}^\infty
s_n(a)\eta_n^*c_n(\varphi)e_n=x_{\xi\eta^*}(\varphi)$ for all
$\varphi\in H$. Thus $x_{\xi\eta^*}\in\mathcal{J}$, i.e.
$\xi\eta^*\in E_\mathcal{J}$. Consequently,
$\{s_n(a)\}_{n=1}^\infty\in E_\mathcal{J}:E_\mathcal{I}$ (see
Proposition \ref{lem}) yielding our claim.

Conversely, let $a\in C_{E_\mathcal{J}:E_\mathcal{I}}$, i.e.
$\{s_n(a)\}_{n=1}^\infty\in E_\mathcal{J}:E_\mathcal{I}$.
Due to (\ref{sv2}), for all $x\in\mathcal{I}$ we have
$\{s_n(ax)\}_{n=1}^\infty\leqslant \sigma_2(\{s_n(a)s_n(x)\}_{n=1}^\infty)$.
Since $\{s_n(a)s_n(x)\}_{n=1}^\infty\in E_\mathcal{J}$, it follows that
$\sigma_2(\{s_n(a)s_n(x)\}_{n=1}^\infty)\in E_\mathcal{J}$, and therefore
$\{s_n(ax)\}_{n=1}^\infty\in E_\mathcal{J}$, i.e. $ax\in\mathcal{J}$. Consequently, $a\in\mathcal{J:I}$.
\end{proof}

Let $\mathcal{I,J}$ be symmetric quasi-Banach ideals of compact operators from
 $\mathcal{B}(H),\mathcal{I}\nsubseteq\mathcal{J}$ and $\mathcal{J:I}$ be the
 $\mathcal{J}$-space of $\mathcal{I}$. For any $a\in\mathcal{J:I}$ define a linear mapping
 $T_a\colon\mathcal{I}\to\mathcal{J}$ by setting  $T_a(x)=ax,x\in\mathcal{I}$.

\begin{claim}\label{Ta}
$T_a$ is a continuous linear mapping from $\mathcal{I}$ to
$\mathcal{J}$ for every $a\in\mathcal{J:I}$.
\end{claim}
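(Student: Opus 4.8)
The plan is to deduce continuity from the closed graph theorem, exactly as was done in the proof of Theorem~\ref{th6}. Since $a\in\mathcal{J:I}$, the assignment $T_a(x)=ax$ indeed takes values in $\mathcal{J}$ and is plainly linear, so $T_a$ is a well-defined linear operator between the $F$-spaces $(\mathcal{I},\|\cdot\|_\mathcal{I})$ and $(\mathcal{J},\|\cdot\|_\mathcal{J})$. By the closed graph theorem for $F$-spaces it therefore suffices to verify that the graph of $T_a$ is closed.

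To this end I would take an arbitrary sequence $x_n\in\mathcal{I}$ with $\|x_n-x\|_\mathcal{I}\to 0$ and $\|ax_n-y\|_\mathcal{J}\to 0$ for some $x\in\mathcal{I}$ and $y\in\mathcal{J}$, and show that necessarily $y=ax$. The key observation is that both ideal quasi-norms dominate the uniform norm: by Proposition~\ref{sim qn}~c) one has $\|z\|_{\mathcal{B}(H)}\leqslant\|z\|_\mathcal{I}$ for $z\in\mathcal{I}$ and $\|z\|_{\mathcal{B}(H)}\leqslant\|z\|_\mathcal{J}$ for $z\in\mathcal{J}$. Consequently $x_n\to x$ and $ax_n\to y$ also in $\|\cdot\|_{\mathcal{B}(H)}$.

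Multiplication by the fixed operator $a$ is continuous for the uniform norm, since $\|ax_n-ax\|_{\mathcal{B}(H)}\leqslant\|a\|_{\mathcal{B}(H)}\|x_n-x\|_{\mathcal{B}(H)}\to 0$. Hence $ax_n\to ax$ uniformly; comparing this with $ax_n\to y$ uniformly and invoking the uniqueness of limits in $\mathcal{B}(H)$ yields $y=ax$, so the pair $(x,y)$ lies on the graph of $T_a$. The graph is thus closed, and $T_a$ is continuous.

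I expect the only point requiring attention to be the reconciliation of convergence taken in the two, in general incomparable, ideal quasi-norms; this is precisely what Proposition~\ref{sim qn}~c) resolves, by routing both convergences through the common uniform topology of $\mathcal{B}(H)$, after which the identification of the limit is immediate. Note that no quantitative estimate on $\|T_a\|_\mathcal{B(I,J)}$ is asserted at this stage, so the argument stops once continuity is established.
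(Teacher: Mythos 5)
Your proof is correct, but it takes a genuinely different route from the paper's. You invoke the closed graph theorem for the $F$-spaces $(\mathcal{I},\|\cdot\|_{\mathcal{I}})$ and $(\mathcal{J},\|\cdot\|_{\mathcal{J}})$ (exactly as the paper itself does for Theorem~\ref{th6}) and identify the limit by passing both convergences to the uniform norm via Proposition~\ref{sim qn}~c), where left multiplication by the fixed operator $a$ is obviously continuous; every step checks out, and the argument is shorter and more elementary. The paper instead gives a direct proof of continuity at $0$: it transfers $\|x_k\|_{\mathcal{I}}\to 0$ to $\|\{s_n(x_k)\}\|_{E_{\mathcal{I}}}\to 0$ in the Calkin sequence space, applies the relative-uniform-convergence criterion of Proposition~\ref{Vulih} to extract subsequences dominated by $\varepsilon_s\eta$ with $\eta\in E_{\mathcal{I}}$, uses $\{s_n(a)\}\eta\in E_{\mathcal{J}}$ to get $(r)$-convergence of the products, and concludes with the estimate $\{s_n(ax_k)\}\leqslant\sigma_2(\{s_n(a)s_n(x_k)\})$. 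What the paper's heavier route buys is a rehearsal of the lattice and dilation machinery ($\sigma_2$, $(r)$-convergence, $E_{\mathcal{J}}$-duality) that is reused immediately afterwards in Theorems~\ref{th_multip} and~\ref{th4.5}; what your route buys is brevity and independence from that machinery (you need only Proposition~\ref{sim qn}~c) and completeness). Since the proposition asserts only continuity and no norm estimate, your argument is fully adequate as stated.
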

\begin{proof}
Let $a\in\mathcal{J:I}, \xi=\{s_n(a)\}_{n=1}^\infty,
x_k\in\mathcal{I}$ and $\|x_k\|_\mathcal{I}\rightarrow 0$ for
$k\rightarrow\infty$. Then
$\xi^{(k)}=\{s_n(x_k)\}_{n=1}^\infty\in\ E_\mathcal{I}$ and
$\|\xi^{(k)}\|_{E_\mathcal{I}}\rightarrow 0$. By Proposition
\ref{Vulih}, for every subsequence $\{\xi^{(k_l)}\}_{l=1}^\infty$
there exists a subsequence $\{\xi^{(k_{l_s})}\}_{s=1}^\infty$ such
that $\xi^{(k_{l_s})}\xrightarrow{(r)} 0$ for
$s\rightarrow\infty$, i.e. there exist $0\leqslant\eta\in
E_\mathcal{I}$ and a sequence $\{\varepsilon_s\}_{s=1}^\infty$ of
positive numbers decreasing to zero such that
$|\xi^{(k_{l_s})}|\leqslant\varepsilon_s\eta$. Since
$a\in\mathcal{J:I}$, we have $\zeta=\xi\eta\in E_\mathcal{J}$, in
addition, $\zeta\geqslant 0$. Since
$|\xi\xi^{(k_{l_s})}|\leqslant\varepsilon_s\zeta$, it follows that
$\xi \xi^{(k_{l_s})}\xrightarrow{(r)}0$. By Proposition
\ref{Vulih}, we have $\|\xi\xi^{(k)}\|_{E_\mathcal{J}}\rightarrow
0$. Consequently,
$$\|ax_k\|_\mathcal{J}=\|\{s_n(ax_k)\}\|_{E_\mathcal{J}}\leqslant\|\sigma_2(\xi\xi^{(k)})\|_{E_\mathcal{J}}\leqslant
2C\|\xi\xi^{(k)}\|_{E_\mathcal{J}}\rightarrow 0 \mbox{ for }
k\rightarrow\infty.$$
\end{proof}

By Proposition \ref{Ta}, $T_a$ is a bounded linear operator from
$\mathcal{I}$ to $\mathcal{J}$, therefore
$\|T_a\|_\mathcal{B(I,J)}=\sup\{\|T_a(x)\|_\mathcal{J}:\|x\|_\mathcal{I}\leqslant
1\} =\sup\{\|ax\|_\mathcal{J}:\|x\|_\mathcal{I}\leqslant
1\}<\infty,$ i.e. for all $a\in\mathcal{J:I}$ the quantity
$$\|a\|_\mathcal{J:I}:= \sup\{\|ax\|_\mathcal{J}:
x\in\mathcal{I},\|x\|_\mathcal{I}\leqslant 1\}$$ is well-defined.

\begin{theorem}\label{th_multip}Let $\mathcal{I,J}$ be symmetric quasi-Banach ideals of compact operators
in $\mathcal{B}(H)$ such that $\mathcal{I}\nsubseteq\mathcal{J}$.
Then $(\mathcal{J:I},\|\cdot\|_\mathcal{J:I})$ is a symmetric
quasi-Banach ideal of compact operators whose modulus of concavity
 does not exceed the modulus of concavity of the quasi-norm
$\|\cdot\|_\mathcal{J}$, in addition,
$\|ax\|_\mathcal{J}\leqslant\|a\|_\mathcal{J:I}\|x\|_\mathcal{I}$
for all $a\in\mathcal{J:I},x\in\mathcal{I}$.
\end{theorem}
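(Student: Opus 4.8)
The plan is to identify $\mathcal{J:I}$ isometrically with a subspace of $\mathcal{B}(\mathcal{I},\mathcal{J})$ via $a\mapsto T_a$ and to transport the quasi-Banach structure from there. First I would note that the claimed inequality is immediate: by the definition $\|a\|_\mathcal{J:I}=\|T_a\|_\mathcal{B(I,J)}$ (Proposition \ref{Ta}) together with the elementary bound $\|T_a x\|_\mathcal{J}\leqslant\|T_a\|_\mathcal{B(I,J)}\|x\|_\mathcal{I}$ valid for every bounded linear map between quasi-normed spaces, one gets $\|ax\|_\mathcal{J}\leqslant\|a\|_\mathcal{J:I}\|x\|_\mathcal{I}$ for all $a\in\mathcal{J:I}$, $x\in\mathcal{I}$. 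Since $\mathcal{I}\nsubseteq\mathcal{J}$, Proposition \ref{subset_K(H)} already guarantees $\mathcal{J:I}\subset\mathcal{K}(H)$, so it remains only to verify that $\|\cdot\|_\mathcal{J:I}$ is a complete symmetric quasi-norm with the stated modulus of concavity.

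Because $a\mapsto T_a$ is linear and isometric onto its range, $\|\cdot\|_\mathcal{J:I}$ inherits the quasi-norm axioms and the bound on the modulus of concavity from $\|\cdot\|_\mathcal{B(I,J)}$, whose modulus of concavity does not exceed that of $\|\cdot\|_\mathcal{J}$ (as recorded in the Preliminaries). Strict positivity needs only one line: $\|a\|_\mathcal{J:I}=0$ forces $ap=0$ for every rank-one projection $p\in\mathcal{I}$, hence $a=0$. For the two defining properties of a symmetric quasi-norm I would use that $\mathcal{J:I}$ is a two-sided ideal of $\mathcal{B}(H)$, so $axb\in\mathcal{J:I}$ whenever $x\in\mathcal{J:I}$, and then for $\|y\|_\mathcal{I}\leqslant1$ estimate $\|(axb)y\|_\mathcal{J}\leqslant\|a\|_{\mathcal{B}(H)}\|x(by)\|_\mathcal{J}\leqslant\|a\|_{\mathcal{B}(H)}\|x\|_\mathcal{J:I}\|b\|_{\mathcal{B}(H)}$, taking the supremum over $y$; for a one-dimensional projection $p$, each $px$ has rank at most one, so $\|px\|_\mathcal{J}=s_1(px)=\|px\|_{\mathcal{B}(H)}\leqslant\|x\|_\mathcal{I}$ by Proposition \ref{sim qn}, giving $\|p\|_\mathcal{J:I}\leqslant1$, while testing against $x=p$ yields $\|p\|_\mathcal{J:I}\geqslant\|p\|_\mathcal{J}=1$, hence equality.

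The substantive step is completeness, and its engine is the auxiliary estimate $\|a\|_{\mathcal{B}(H)}\leqslant\|a\|_\mathcal{J:I}$, which I would prove by the same rank-one device: for a unit vector $\psi$ with projection $p$ onto it, $\|a\psi\|_H\leqslant\|ap\|_{\mathcal{B}(H)}=s_1(ap)=\|ap\|_\mathcal{J}\leqslant\|a\|_\mathcal{J:I}$. Given a Cauchy sequence $\{a_k\}\subset\mathcal{J:I}$, this makes $\{a_k\}$ uniformly Cauchy, so $a_k\to a$ in $\mathcal{B}(H)$ for some $a$, while $\{T_{a_k}\}$, being Cauchy in the quasi-Banach space $\mathcal{B}(\mathcal{I},\mathcal{J})$, converges to some $T$. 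Fixing $x\in\mathcal{I}$, one has $a_kx\to ax$ uniformly and $a_kx\to Tx$ in $\|\cdot\|_\mathcal{J}$; since $\|\cdot\|_{\mathcal{B}(H)}\leqslant\|\cdot\|_\mathcal{J}$ (Proposition \ref{sim qn}) the two limits coincide, forcing $ax=Tx\in\mathcal{J}$. Hence $a\in\mathcal{J:I}$, $T=T_a$, and $\|a_k-a\|_\mathcal{J:I}=\|T_{a_k}-T_a\|_\mathcal{B(I,J)}\to0$. I expect this last step to be the main obstacle: manufacturing the limit operator $a$ inside $\mathcal{B}(H)$ and certifying that it genuinely belongs to $\mathcal{J:I}$ (rather than merely inducing an abstract limit multiplier $T$), which the reconciliation of the uniform and $\|\cdot\|_\mathcal{J}$ convergences is exactly what accomplishes.
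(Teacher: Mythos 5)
Your proposal is correct and follows essentially the same route as the paper: verify the quasi-norm axioms and the two defining properties of a symmetric quasi-norm directly from the supremum definition, and prove completeness by combining the uniform estimate $\|a\|_{\mathcal{B}(H)}\leqslant\|a\|_{\mathcal{J:I}}$ (to manufacture the limit $a$ in $\mathcal{B}(H)$) with the $\|\cdot\|_{\mathcal{J}}$-convergence of $a_k x$, reconciling the two limits via $\|\cdot\|_{\mathcal{B}(H)}\leqslant\|\cdot\|_{\mathcal{J}}$. The only cosmetic differences are that you obtain the limit multiplier $T$ by citing the completeness of $\mathcal{B}(\mathcal{I},\mathcal{J})$ rather than re-running the pointwise Cauchy argument, and you prove the key estimate $\|a\|_{\mathcal{B}(H)}\leqslant\|a\|_{\mathcal{J:I}}$ directly with rank-one projections instead of invoking Proposition \ref{sim qn} c); both choices are sound.
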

\begin{proof}
Since $\|\cdot\|_{\mathcal{B(I,J)}}$ is a quasi-norm with the
modulus of concavity which does not exceed the modulus of
concavity of the quasi-norm $\|\cdot\|_\mathcal{J}$, we see that
$\|\cdot\|_\mathcal{J:I}$ is a quasi-norm on  $\mathcal{J:I}$ with
the modulus of concavity which does not exceed the modulus of
concavity of the quasi-norm $\|\cdot\|_\mathcal{J}$.

If $y\in\mathcal{B}(H), a\in\mathcal{J:I}$, then
\begin{gather*}
\begin{split}
\|ya\|_\mathcal{J:I}&=\sup\{\|(ya)x\|_\mathcal{J}: x\in\mathcal{I},\|x\|_\mathcal{I}\leqslant 1\}\leqslant
\\
&\leqslant\sup\{\|y\|_{\mathcal{B}(H)}\|ax\|_\mathcal{J}:x\in\mathcal{I},\|x\|_\mathcal{I}\leqslant 1\}=\|y\|_{\mathcal{B}(H)}\|a\|_\mathcal{J:I}.
\end{split}
\end{gather*}
Since $yx\in\mathcal{I}$ for all $x\in\mathcal{I}$ and $\|yx\|_\mathcal{I}\leqslant\|y\|_{\mathcal{B}(H)}\|x\|_\mathcal{I}$ the for $y\neq 0$ and $\|x\|_\mathcal{I}\leqslant 1$ we have $\|\frac{yx}{\|y\|_{\mathcal{B}(H)}}\|_\mathcal{I}\leqslant 1$. Hence,
\begin{gather*}
\begin{split}
\|ay\|_{\mathcal{J:I}}&=\sup\{\|a(yx)\|_\mathcal{J}:x\in\mathcal{I},\|x\|_\mathcal{I}\leqslant 1\}\leqslant
\\
&\leqslant \|y\|_{\mathcal{B}(H)}\sup\{\|ax\|_\mathcal{J}:x\in\mathcal{I},\|x\|_\mathcal{I}\leqslant 1\}=\|y\|_{\mathcal{B}(H)}\|a\|_\mathcal{I:J}.
\end{split}
\end{gather*}

If $p$ is a one-dimensional projection from $\mathcal{B}(H)$, then $p\in\mathcal{I},\|p\|_\mathcal{I}=1$, and so $$\|p\|_\mathcal{J:I}=\sup\{\|px\|_\mathcal{J}:x\in\mathcal{I},\|x\|_\mathcal{I}\leqslant 1\}\geqslant\|p\|_\mathcal{J}=1.$$
On the other hand, for $x\in\mathcal{I}$ with $\|x\|_\mathcal{I}\leqslant 1$ we have $\|x\|_{\mathcal{B}(H)}\leqslant 1$ (see Proposition \ref{sim qn} c)), and therefore
$$\|px\|_\mathcal{J}=\|p(px)\|_\mathcal{J}\leqslant \|px\|_{\mathcal{B}(H)}\|p\|_\mathcal{J}\leqslant 1.$$
Consequently, $\|p\|_\mathcal{J:I}=1$.

Thus, $\|\cdot\|_\mathcal{J:I}$ is a symmetric quasi-norm on the two-sided ideal $\mathcal{J:I}$. The inequality $\|ax\|_\mathcal{J}\leqslant\|a\|_\mathcal{J:I}\|x\|_\mathcal{I}$ immediately follows from the definition of $\|\cdot\|_\mathcal{J:I}$.

Let us show that $(\mathcal{J:I},\|\cdot\|_\mathcal{J:I})$ is a
quasi-Banach space.

Denote by $\interleave \cdot \interleave_\mathcal{J}$
(respectively $\interleave \cdot \interleave_\mathcal{J:I}$) a
$p$-additive (respectively, $q$-additive) quasi-norm on
$\mathcal{J}$ (respectively, on $\mathcal{J:I}$) which is
equivalent to the quasi-norm $\|\cdot\|_\mathcal{J}$
(respectively, $\|\cdot\|_\mathcal{J:I}$), where $0<p,q\leqslant
1$. In particular, we have  $\alpha_1\interleave x
\interleave_\mathcal{J}\leqslant\|x\|_\mathcal{J}\leqslant\beta_1\interleave
x \interleave_\mathcal{J}$ and $\alpha_2\interleave a
\interleave_\mathcal{J:I}\leqslant\|a\|_\mathcal{J:I}\leqslant\beta_2\interleave
a \interleave_\mathcal{J:I}$ for all
$x\in\mathcal{J},a\in\mathcal{J:I}$ and some constants
$\alpha_1,\alpha_2,\beta_1,\beta_2>0$. Let
$d_\mathcal{J}(x,y)=\interleave x-y \interleave^p_\mathcal{J},
d_\mathcal{J:I}(a,b)=\interleave a-b \interleave^q_\mathcal{J:I}$
be metrics on $\mathcal{J}$ and $\mathcal{J:I}$ respectively.

Let $\{a_n\}_{n=1}^\infty$ be a Cauchy sequence in $(\mathcal{J:I},d_\mathcal{J:I})$, i.e. $\interleave a_n-a_m \interleave_\mathcal{J:I}^q\leqslant\varepsilon^q$ for all $n,m\geqslant n(\varepsilon)$, thus
\begin{gather}\label{a_nx_fund}
\interleave a_nx-a_mx\interleave_\mathcal{J}\leqslant\frac{1}{\alpha_1}
\|a_nx-a_mx\|_\mathcal{J}\leqslant\frac{1}{\alpha_1}\|a_n-a_m\|_\mathcal{J:I}\|x\|_\mathcal{I} \leqslant
\\\notag
\leqslant\frac{\beta_2}{\alpha_1}\interleave a_n-a_n\interleave_\mathcal{J:I}\|x\|_\mathcal{I}\leqslant\frac{\beta_2}{\alpha_1}
\varepsilon\|x\|_\mathcal{I}
\end{gather}
for all $x\in\mathcal{I}$ for $n,m\geqslant n(\varepsilon)$.
Consequently, the sequence $\{a_nx\}_{n=1}^\infty$ is a Cauchy sequence in $(\mathcal{J},d_\mathcal{J}),x\in\mathcal{I}$.  Since the metric space $(\mathcal{J},d_\mathcal{J})$ is complete, there exists an operator $z(x)\in\mathcal{J}$ such that $\interleave a_nx-z(x) \interleave^p_\mathcal{J}\rightarrow 0$ for $n\rightarrow\infty$. Since $$\|a_nx-z(x)\|_{\mathcal{B}(H)}\leqslant\|a_nx-z(x)\|_\mathcal{J}\leqslant\beta_1\interleave a_nx-z(x)\interleave_\mathcal{J},$$ it follows that $\|a_nx-z(x)\|_{\mathcal{B}(H)}\rightarrow 0$.

Since
$$\|a_n-a_m\|_{\mathcal{B}(H)}\leqslant\|a_n-a_m\|_\mathcal{J:I}\leqslant\beta_2\interleave
a_n-a_m\interleave_\mathcal{J:I}\rightarrow 0$$ for
$n,m\rightarrow\infty$, there exists  $a\in\mathcal{B}(H)$ such
that $\|a_n-a\|_{\mathcal{B}(H)}\rightarrow 0$ for
$n\rightarrow\infty$. For an arbitrary $x\in\mathcal{I}$, we have
$\|a_nx-ax\|_{\mathcal{B}(H)}\leqslant\|a_n-a\|_{\mathcal{B}(H)}\|x\|_\mathcal{I}\rightarrow
0$ for $n\rightarrow\infty$.

Thus, $ax=z(x)$ for all $x\in\mathcal{I}$. Since $z(x)\in\mathcal{J}$ for all $x\in\mathcal{I}$,
it follows that $a\in\mathcal{J:I}$, moreover, due to (\ref{a_nx_fund}),
$\|a_nx-ax\|_\mathcal{J}\leqslant\beta_2\varepsilon\|x\|_\mathcal{I}$ for
$n\geqslant n(\varepsilon)$ and for all $x\in\mathcal{I}$. Consequently,
\begin{gather*}
\interleave a_n-a \interleave_\mathcal{J:I}\leqslant\frac{1}{\alpha_2}\|a_n-a\|_\mathcal{J:I}=\frac{1}{\alpha_2} \sup\bigl\{\|a_nx-ax\|_\mathcal{J}: x\in\mathcal{I}, \|x\|_\mathcal{I}\leqslant 1\bigl\}\leqslant\frac{\beta_2}{\alpha_2}\varepsilon
\end{gather*}
for $n\geqslant n(\varepsilon),$ i.e. $\interleave a_n-a
\interleave_\mathcal{J:I}\rightarrow 0$. Thus, the metric space
$(\mathcal{J:I},d_\mathcal{I:J})$ is complete, i.e.
$(\mathcal{J:I},\|\cdot\|_\mathcal{J:I})$ is a quasi-Banach space.
\end{proof}

\begin{remark}\label{ax=xa} Since the quasi-norms $\|\cdot\|_\mathcal{J}$ and
$\|\cdot\|_\mathcal{J:I}$ are symmetric, for all $a\in\mathcal{J:I}$ the relations
\begin{gather*}
\begin{split}
\|a\|_\mathcal{J:I}&=\|a^*\|_\mathcal{J:I}=
\sup\{\|a^*x\|_\mathcal{J}:x\in\mathcal{I},\|x\|_\mathcal{I}\leqslant 1\}=
\\
&=\sup\{\|x^*a\|_\mathcal{J}:x\in\mathcal{I},\|x\|_\mathcal{I}\leqslant 1\}=
\sup\{\|xa\|_\mathcal{J}:x\in\mathcal{I},\|x\|_\mathcal{I}\leqslant 1\}
\end{split}
\end{gather*}
hold, i.e. for any $a\in\mathcal{J:I}$ we have
\begin{gather}\label{norm a}
\|a\|_\mathcal{J:I}=\sup\{\|xa\|_\mathcal{J}:x\in\mathcal{I},\|x\|_\mathcal{I}\leqslant 1\}.
\end{gather}
\end{remark}

When $\mathcal{I\subseteq J}$ we have $\mathcal{J:I}=\mathcal{B}(H)$ and for any $a\in\mathcal{J:I}$ the mapping $T_a(x)=ax$ is a bounded linear operator from $\mathcal{I}$ to $\mathcal{J}$. As in the proof of Theorem \ref{th_multip} we may establish that $\|a\|_\mathcal{J:I}=\sup\{\|ax\|_\mathcal{J}:x\in\mathcal{I},\|x\|_\mathcal{I}\leqslant 1\}$ is a complete symmetric quasi-norm on $\mathcal{J:I}$. In addition, in case $\mathcal{I=J}$ we have
\begin{gather*}
\begin{split}\|a\|_\mathcal{I:I}&=\sup\{\|ax\|_\mathcal{I}:x\in\mathcal{I},\|x\|_\mathcal{I}\leqslant 1\}\leqslant
\\
&\leqslant\sup\{\|a\|_{\mathcal{B}(H)}\|x\|_\mathcal{I}:x\in\mathcal{I},\|x\|_\mathcal{I}\leqslant 1\}\leqslant\|a\|_{\mathcal{B}(H)},
\end{split}
\end{gather*}
i.e. \begin{gather}\label{I:I}
\|a\|_\mathcal{I:I}\leqslant\|a\|_{\mathcal{B}(H)} \mbox{ for all } a\in\mathcal{I:I}.
\end{gather}
Thus, the norm $\|\cdot\|_{\mathcal{B}(H)}$ and the quasi-norm $\|\cdot\|_\mathcal{I:I}$ are equivalent.

Now, let $G$ and $F$ be arbitrary symmetric quasi-Banach sequence spaces in $l_\infty$. For every $\xi\in F:G$ set
$$\|\xi\|_{F:G}=\sup\{\|\xi\eta\|_F:\eta\in G,\|\eta\|_G\leqslant 1\}.$$

The following theorem may be established similarly to Theorem
\ref{th_multip}.

\begin{theorem}\label{multip_seq}If $G\nsubseteq F$, then $(F:G,\|\cdot\|_{F:G})$ is a symmetric quasi-Banach sequence space in $c_0$ with the modulus of concavity, which does not exceed the modulus of concavity of the quasi-norm $\|\cdot\|_F$, in addition, $\|\xi\eta\|_F\leqslant\|\xi\|_{F:G}\|\eta\|_G$ for all $\xi\in F:G,\eta\in G$.
\end{theorem}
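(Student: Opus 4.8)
Throughout, since $G\nsubseteq F$ forces $F:G\neq l_\infty$, Propositions \ref{pr_subsetc0} and \ref{lem} already supply $c_{00}\subset F:G\subset c_0$ together with the fact that $F:G$ is a solid rearrangement-invariant subspace of $c_0$. The plan is to run the strategy of Theorem \ref{th_multip} in the sequence setting — with operators, singular numbers and the submultiplicative bound replaced by sequences and pointwise products — but to streamline it by identifying $(F:G,\|\cdot\|_{F:G})$ with a subspace of the space $\mathcal{B}(G,F)$ of bounded operators. So the task is to show that $\|\cdot\|_{F:G}$ is a well-defined complete symmetric quasi-norm with modulus of concavity not exceeding that of $\|\cdot\|_F$, and that $\|\xi\eta\|_F\leqslant\|\xi\|_{F:G}\|\eta\|_G$.

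First I would check that the supremum defining $\|\xi\|_{F:G}$ is finite, i.e. that the multiplication map $M_\xi\colon\eta\mapsto\xi\eta$ is a bounded operator from $G$ to $F$ — the sequence analogue of Proposition \ref{Ta}. The quickest route is the closed graph theorem for $F$-spaces: if $\eta_k\to\eta$ in $G$ and $\xi\eta_k\to\zeta$ in $F$, then, because $\|\cdot\|_\infty\leqslant\|\cdot\|_G$ and $\|\cdot\|_\infty\leqslant\|\cdot\|_F$ on the symmetric spaces $G$ and $F$, both limits are coordinatewise, so $\zeta=\xi\eta=M_\xi\eta$ and the graph is closed; alternatively one repeats verbatim the Vulikh argument of Proposition \ref{Ta}, passing along any subsequence to one with $\eta_{k_s}\xrightarrow{(r)}0$, $|\eta_{k_s}|\leqslant\varepsilon_s\zeta$, and using that $|\xi|\zeta=|\xi\zeta|\in F$. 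Once $M_\xi\in\mathcal{B}(G,F)$, the map $\xi\mapsto M_\xi$ is a linear isometry of $(F:G,\|\cdot\|_{F:G})$ onto a subspace of $(\mathcal{B}(G,F),\|\cdot\|_{\mathcal{B}(G,F)})$, since $\|M_\xi\|_{\mathcal{B}(G,F)}=\|\xi\|_{F:G}$. The properties of $\mathcal{B}(X,Y)$ recorded in the Preliminaries then give at once that $\|\cdot\|_{F:G}$ is a quasi-norm whose modulus of concavity does not exceed that of $\|\cdot\|_F$; moreover, using $\|\cdot\|_\infty\leqslant\|\cdot\|_{F:G}$ (which holds because $\|\xi\|_{F:G}\geqslant\|\xi e_n\|_F=|\xi_n|$ for every $n$) one checks, exactly as in the finiteness step, that this subspace is closed in the complete space $\mathcal{B}(G,F)$, so $(F:G,\|\cdot\|_{F:G})$ is complete. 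The inequality $\|\xi\eta\|_F\leqslant\|\xi\|_{F:G}\|\eta\|_G$ is then immediate by homogeneity of the supremum.

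It remains to verify that $\|\cdot\|_{F:G}$ is \emph{symmetric} in the sense demanded for sequence spaces. Normalization $\|\{1,0,0,\dots\}\|_{F:G}=1$ is read off from $\{1,0,\dots\}\eta=\eta_1\{1,0,\dots\}$, and solidity is direct: $|\xi|\leqslant|\xi'|$ gives $|\xi\eta|\leqslant|\xi'\eta|$, hence $(\xi\eta)^*\leqslant(\xi'\eta)^*$ and $\|\xi\eta\|_F\leqslant\|\xi'\eta\|_F$ for all $\eta$, so $\|\xi\|_{F:G}\leqslant\|\xi'\|_{F:G}$. The genuinely new point — and the one I expect to require the most care — is rearrangement invariance $\|\xi\|_{F:G}=\|\xi^*\|_{F:G}$: in Theorem \ref{th_multip} the symmetry of $\|\cdot\|_{\mathcal{J:I}}$ came for free from the bimodule estimate, whereas a permutation of coordinates is not a multiplication by an $l_\infty$-element and must be treated on its own. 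I would obtain it by transporting multipliers along the coordinate maps that carry $\xi$ to $\xi^*$: the permutation isometries $U_\pi$ from the proof of Proposition \ref{lem} satisfy $U_\pi(\xi\eta)=U_\pi(\xi)U_\pi(\eta)$ and $(U_\pi\eta)^*=\eta^*$, so the substitution $\eta=U_\pi\zeta$ leaves the defining supremum unchanged; the only extra bookkeeping is that two $c_0$-sequences with a common decreasing rearrangement may differ in the number of zero coordinates, a discrepancy absorbed by the norm-non-increasing compressions and the zero-insertion maps, both of which preserve $\eta^*$ (hence $\|\eta\|_G$) and the rearrangement of the products (hence $\|\xi\eta\|_F$). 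Combined with solidity this gives $\xi^*\leqslant(\xi')^*\Rightarrow\|\xi\|_{F:G}=\|\xi^*\|_{F:G}\leqslant\|(\xi')^*\|_{F:G}=\|\xi'\|_{F:G}$, so $(F:G,\|\cdot\|_{F:G})$ is a symmetric quasi-Banach sequence space in $c_0$, as claimed.
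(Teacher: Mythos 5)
Your proposal is correct and follows essentially the route the paper intends: the paper gives no separate proof of Theorem \ref{multip_seq}, stating only that it ``may be established similarly to Theorem \ref{th_multip}'', and your argument is exactly that translation, with boundedness of $M_\xi$ playing the role of Proposition \ref{Ta}, the embedding into $\mathcal{B}(G,F)$ giving the quasi-norm and its modulus of concavity, and the coordinatewise identification (via $\|\cdot\|_\infty\leqslant\|\cdot\|_G$, $\|\cdot\|_\infty\leqslant\|\cdot\|_F$) giving completeness just as $\|\cdot\|_{\mathcal{B}(H)}\leqslant\|\cdot\|_{\mathcal{J:I}}$ does in Theorem \ref{th_multip}. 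You also correctly isolate the one step that is not a verbatim transcription --- rearrangement invariance of $\|\cdot\|_{F:G}$, which in the operator setting falls out of the bimodule estimate but for sequences needs the permutation/zero-insertion argument via $U_\pi$ --- and your treatment of it is sound.
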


Let $\mathcal{I,J}$ be symmetric quasi-Banach ideals of compact
operators from $\mathcal{B}(H),\mathcal{I}\nsubseteq\mathcal{J}$.
By Proposition \ref{prmultip},
$\mathcal{J:I}=C_{E_\mathcal{J}:E_\mathcal{I}}$, i.e.
$C_{E_\mathcal{J}:E_\mathcal{I}}$ is a two-sided ideal of compact
operators from $\mathcal{B}(H)$. For every $a\in
C_{E_\mathcal{J}:E_\mathcal{I}}$ we set
$$\|a\|_{C_{E_\mathcal{J}:E_\mathcal{I}}}:=\|\{s_n(a)\}\|_{E_\mathcal{I}:E_\mathcal{J}}.$$

\begin{claim}\label{S norm}
$\|\cdot\|_{C_{E_\mathcal{J}:E_\mathcal{I}}}$ is a symmetric quasi-norm on $C_{E_\mathcal{J}:E_\mathcal{I}}$.
\end{claim}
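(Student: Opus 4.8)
The plan is to transfer each axiom of a symmetric quasi-norm from the sequence space $E:=E_\mathcal{J}:E_\mathcal{I}$ to $C_E:=C_{E_\mathcal{J}:E_\mathcal{I}}$ through the singular value map $a\mapsto\{s_n(a)\}_{n=1}^\infty$. The first thing I would record is that $E$ is genuinely a symmetric quasi-Banach sequence space: since $\mathcal{I}\nsubseteq\mathcal{J}$, Theorem \ref{inverse K-S} yields $E_\mathcal{I}\nsubseteq E_\mathcal{J}$, so Theorem \ref{multip_seq} applies with $G=E_\mathcal{I}$, $F=E_\mathcal{J}$ and provides a complete symmetric quasi-norm $\|\cdot\|_E$ on $E$ enjoying solidity, homogeneity and the normalization $\|\{1,0,0,\dots\}\|_E=1$. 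These properties of $\|\cdot\|_E$, together with the elementary properties of singular values, are the only external facts required.

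Positivity and homogeneity are immediate. By Proposition \ref{s-chisla1}(a) we have $s_n(\alpha a)=|\alpha|s_n(a)$, so homogeneity of $\|\cdot\|_E$ gives $\|\alpha a\|_{C_E}=|\alpha|\,\|a\|_{C_E}$; moreover $\|a\|_{C_E}=0$ forces $s_n(a)=0$ for every $n$, hence $a=0$. For the normalization axiom, a one-dimensional projection $p$ has singular value sequence $\{1,0,0,\dots\}$, so $\|p\|_{C_E}=\|\{1,0,0,\dots\}\|_E=1$. The ideal inequality $\|uav\|_{C_E}\leqslant\|u\|_{\mathcal{B}(H)}\,\|a\|_{C_E}\,\|v\|_{\mathcal{B}(H)}$ for $u,v\in\mathcal{B}(H)$ follows by combining the two estimates of Proposition \ref{s-chisla1}(b) into $s_n(uav)\leqslant\|u\|_{\mathcal{B}(H)}\|v\|_{\mathcal{B}(H)}s_n(a)$ and then applying solidity and homogeneity of $\|\cdot\|_E$ to the non-increasing sequences involved.

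The one step that is not purely formal is the quasi-triangle inequality, and it is here that the dilation operator $\sigma_2$ enters. Inequality (\ref{sv}) gives $\{s_n(a+b)\}\leqslant\sigma_2(\{s_n(a)+s_n(b)\})$, where both sides are non-increasing, so solidity of $\|\cdot\|_E$ yields $\|a+b\|_{C_E}\leqslant\|\sigma_2(\{s_n(a)+s_n(b)\})\|_E$. Using the boundedness of $\sigma_2$ on $E$ established before Proposition \ref{sigma} (namely $\|\sigma_2(\zeta)\|_E\leqslant 2C\|\zeta\|_E$, where $C$ is the modulus of concavity of $\|\cdot\|_E$) and then the quasi-triangle inequality of $\|\cdot\|_E$, the right-hand side is bounded by $2C^2(\|a\|_{C_E}+\|b\|_{C_E})$. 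In particular $\{s_n(a+b)\}\in E$, so $C_E$ is closed under addition and is a linear subspace, and the modulus of concavity of $\|\cdot\|_{C_E}$ is finite.

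The main (indeed, essentially the only) point requiring care is the constant bookkeeping in this last step: one must ensure that solidity is applied to genuinely non-increasing sequences and that the $\sigma_2$-estimate is invoked with the correct constant, which is exactly what controls the modulus of concavity. Everything else is a direct translation of the axioms of the symmetric sequence quasi-norm $\|\cdot\|_E$ across the singular value map, so I expect no essential difficulty beyond keeping track of these constants.
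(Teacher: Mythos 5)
Your argument is correct and follows essentially the same route as the paper: positivity, homogeneity and the normalization are read off directly from the singular value map, the ideal property comes from Proposition \ref{s-chisla1}(b) together with solidity of $\|\cdot\|_{E_\mathcal{J}:E_\mathcal{I}}$, and the quasi-triangle inequality is obtained from inequality (\ref{sv}) combined with the bound $\|\sigma_2(\zeta)\|\leqslant 2C\|\zeta\|$ and the quasi-triangle inequality on the sequence space, yielding the modulus of concavity $2C^2$. Your explicit appeal to Theorem \ref{multip_seq} (via $E_\mathcal{I}\nsubseteq E_\mathcal{J}$ from Theorem \ref{inverse K-S}) to justify that $\|\cdot\|_{E_\mathcal{J}:E_\mathcal{I}}$ is itself a symmetric quasi-norm is exactly the ingredient the paper uses implicitly, so there is nothing to add.
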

\begin{proof}Obviously, $\|a\|_{C_{E_\mathcal{J}:E_\mathcal{I}}}\geqslant 0$ for all $a\in C_{E_\mathcal{J}:E_\mathcal{I}}$ and $\|a\|_{C_{E_\mathcal{J}:E_\mathcal{I}}}=0 \Leftrightarrow a=0$. If $a,b\in C_{E_\mathcal{J}:E_\mathcal{I}},\lambda\in\mathbb{C}$, then
\begin{gather*}
\|\lambda a\|_{C_{E_\mathcal{J}:E_\mathcal{I}}}=\|\{s_n(\lambda a)\}_{n=1}^\infty\|_{E_\mathcal{J}:E_\mathcal{I}}=
|\lambda|\|a\|_{C_{E_\mathcal{J}:E_\mathcal{I}}}
\end{gather*}
and
\begin{gather*}
\begin{split}
\|a+b\|_{C_{E_\mathcal{J}:E_\mathcal{I}}}&=\|\{s_n(a+b)\}\|_{E_\mathcal{J}:E_\mathcal{I}}
\begin{smallmatrix}
(\ref{sv})\\\leqslant
\\~
\end{smallmatrix}
\|\sigma_2(\{s_n(a)+s_n(b)\})\|_{E_\mathcal{J}:E_\mathcal{I}}\leqslant
\\
&\leqslant 2C\|\{s_n(a)\}+\{s_n(b)\}\|_{E_\mathcal{J}:E_\mathcal{I}}\leqslant
\\
&\leqslant 2C^2(\|\{s_n(a)\}\|_{E_\mathcal{J}:E_\mathcal{I}}+\|\{s_n(b)\}\|_{E_\mathcal{J}:E_\mathcal{I}})=
\\
&=2C^2(\|a\|_{C_{E_\mathcal{J}:E_\mathcal{I}}}+\|b\|_{C_{E_\mathcal{J}:E_\mathcal{I}}}).
\end{split}
\end{gather*}

Hence, $\|\cdot\|_{C_{E_\mathcal{J}:E_\mathcal{I}}}$ is a quasi-norm on $C_{E_\mathcal{J}:E_\mathcal{I}}$ and the modulus of concavity of $\|\cdot\|_{C_{E_\mathcal{J}:E_\mathcal{I}}}$ does not exceed $2C^2$, where $C$ is the modulus of concavity of the quasi-norm $\|\cdot\|_{E_\mathcal{J}}$.

Since
$s_n(xay)\leqslant\|x\|_{\mathcal{B}(H)}\|y\|_{\mathcal{B}(H)}s_n(a)$
for all $a\in\mathcal{K}(H),x,y\in\mathcal{B}(H),n\in\mathbb{N}$
(see Proposition \ref{s-chisla1}), it follows
$$
\|xay\|_{C_{E_\mathcal{J}:E_\mathcal{I}}}=\|\{s_n(xay)\}\|_{E_\mathcal{J}:E_\mathcal{I}}\leqslant
\|x\|_{\mathcal{B}(H)}\|y\|_{\mathcal{B}(H)}\|a\|_{C_{E_\mathcal{J}:E_\mathcal{I}}}.
$$

It is clear that $\|p\|_{C_{E_\mathcal{J}:E_\mathcal{I}}}=1$ for every one-dimensional projection $p$.

Thus, $\|\cdot\|_{C_{E_\mathcal{J}:E_\mathcal{I}}}$ is a symmetric quasi-norm on $C_{E_\mathcal{J}:E_\mathcal{I}}$.
\end{proof}

\begin{remark}
(i) If $\mathcal{I,J}$ are symmetric Banach ideals of compact
operators in $\mathcal{B}(H)$ and
$\mathcal{I}\nsubseteq\mathcal{J}$, then
$(\mathcal{J:I},\|\cdot\|_\mathcal{J:I})$ is a symmetric Banach
ideal of compact operators (Theorem \ref{th_multip}), and
therefore $(E_\mathcal{J:I},\|\cdot\|_{E_\mathcal{J:I}})$ is a
symmetric Banach sequence space in $c_0$ (Theorem \ref{inverse
K-S}).

(ii) If $G,F$ are symmetric Banach sequence spaces in $c_0$ and $G\nsubseteq F$, then $(F:G,\|\cdot\|_{F:G})$ is a symmetric Banach sequence space in $c_0$ (Theorem \ref{multip_seq}), and therefore $(C_{F:G},\|\cdot\|_{C_{F:G}})$ is a symmetric Banach ideal of compact operators from $\mathcal{B}(H)$ (Theorem \ref{K_S}).
\end{remark}

\begin{theorem}\label{th4.5}Let $\mathcal{I,J}$ be symmetric quasi-Banach ideals of compact operators from $\mathcal{B}(H)$ and $\mathcal{I}\nsubseteq\mathcal{J}$. Then

(i) $E_\mathcal{J:I}=E_\mathcal{J}:E_\mathcal{I}$ and $\|\cdot\|_{E_\mathcal{J}:E_\mathcal{I}}\leqslant\|\cdot\|_{E_{\mathcal{J:I}}}\leqslant 2C\|\cdot\|_{E_\mathcal{J}:E_\mathcal{I}}$, where $C$ is the modulus of concavity of the quasi-norm $\|\cdot\|_\mathcal{J}$;

(ii) $\mathcal{J:I}=C_{E_\mathcal{J}:E_\mathcal{I}}$ and $\|\cdot\|_{C_{E_\mathcal{J}:E_\mathcal{I}}}\leqslant\|\cdot\|_\mathcal{J:I}\leqslant 2C\|\cdot\|_{C_{E_\mathcal{J}:E_\mathcal{I}}}$, where $C$ is the modulus of concavity of the quasi-norm $\|\cdot\|_{E_\mathcal{J}}$
\end{theorem}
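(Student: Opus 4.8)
The plan is to establish the operator-level statement (ii) first and then transfer it to the sequence-level statement (i) through the norm-preserving Calkin correspondence of Theorem \ref{inverse K-S}. The set equality $\mathcal{J:I}=C_{E_\mathcal{J}:E_\mathcal{I}}$ in (ii) is already Proposition \ref{prmultip}, so only the two-sided norm estimate remains there. For the set equality $E_{\mathcal{J:I}}=E_\mathcal{J}:E_\mathcal{I}$ in (i) I would argue directly: a sequence $\xi$ lies in $E_{\mathcal{J:I}}$ precisely when $\xi^*=\{s_n(a)\}$ for some $a\in\mathcal{J:I}$, which by Propositions \ref{prmultip} and \ref{lem} happens exactly when $\xi^*\in E_\mathcal{J}:E_\mathcal{I}$, i.e. when $\xi\in E_\mathcal{J}:E_\mathcal{I}$ by rearrangement invariance.

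For the left-hand inequality of (ii), fix $a\in\mathcal{J:I}$ and set $\xi=\{s_n(a)\}$ (so $\xi=\xi^*$), recalling that $\|a\|_{C_{E_\mathcal{J}:E_\mathcal{I}}}=\|\xi\|_{E_\mathcal{J}:E_\mathcal{I}}=\sup\{\|\xi\eta\|_{E_\mathcal{J}}:\|\eta\|_{E_\mathcal{I}}\leqslant 1\}$. For each such $\eta$ I would form the diagonal operators $x_\xi,x_\eta$ in one common orthonormal basis $\{e_n\}$; then $x_\xi x_\eta=x_{\xi\eta}$ is again diagonal, so $\{s_n(x_\xi x_\eta)\}=(\xi\eta)^*$ and, since $\|\cdot\|_{E_\mathcal{J}}$ depends only on the decreasing rearrangement, $\|x_\xi x_\eta\|_\mathcal{J}=\|(\xi\eta)^*\|_{E_\mathcal{J}}=\|\xi\eta\|_{E_\mathcal{J}}$. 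Because $\mathcal{J:I}$ is a symmetric ideal (Theorem \ref{th_multip}) we have $\|x_\xi\|_\mathcal{J:I}=\|a\|_\mathcal{J:I}$, while $\|x_\eta\|_\mathcal{I}=\|\eta\|_{E_\mathcal{I}}\leqslant 1$; hence Theorem \ref{th_multip} gives $\|\xi\eta\|_{E_\mathcal{J}}=\|x_\xi x_\eta\|_\mathcal{J}\leqslant\|x_\xi\|_\mathcal{J:I}\|x_\eta\|_\mathcal{I}\leqslant\|a\|_\mathcal{J:I}$, and taking the supremum over $\eta$ yields $\|a\|_{C_{E_\mathcal{J}:E_\mathcal{I}}}\leqslant\|a\|_\mathcal{J:I}$ with constant $1$.

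For the right-hand inequality I would work at the operator level using the singular-value product estimate (\ref{sv2}). Given $x\in\mathcal{I}$ with $\|x\|_\mathcal{I}\leqslant 1$, put $\eta=\{s_n(x)\}$, so $\|\eta\|_{E_\mathcal{I}}\leqslant 1$ and $\{s_n(ax)\}\leqslant\sigma_2(\xi\eta)$, where both sides are nonincreasing. Monotonicity of $\|\cdot\|_{E_\mathcal{J}}$, the bound $\|\sigma_2\|_{\mathcal{B}(E_\mathcal{J},E_\mathcal{J})}\leqslant 2C$, and the defining inequality of Theorem \ref{multip_seq} then give
\[
\|ax\|_\mathcal{J}=\|\{s_n(ax)\}\|_{E_\mathcal{J}}\leqslant\|\sigma_2(\xi\eta)\|_{E_\mathcal{J}}\leqslant 2C\|\xi\eta\|_{E_\mathcal{J}}\leqslant 2C\|\xi\|_{E_\mathcal{J}:E_\mathcal{I}}\|\eta\|_{E_\mathcal{I}}\leqslant 2C\|a\|_{C_{E_\mathcal{J}:E_\mathcal{I}}}.
\]
Taking the supremum over $x$ proves $\|a\|_\mathcal{J:I}\leqslant 2C\|a\|_{C_{E_\mathcal{J}:E_\mathcal{I}}}$, with $C$ the modulus of concavity of $\|\cdot\|_{E_\mathcal{J}}$, exactly as in (ii). To deduce (i) I would invoke Theorem \ref{inverse K-S} in the form $\|\xi\|_{E_{\mathcal{J:I}}}=\|x_{\xi^*}\|_\mathcal{J:I}$, together with $\|\xi\|_{E_\mathcal{J}:E_\mathcal{I}}=\|x_{\xi^*}\|_{C_{E_\mathcal{J}:E_\mathcal{I}}}$; substituting these into (ii) produces the estimate of (i) once the modulus of concavity of $\|\cdot\|_{E_\mathcal{J}}$ is replaced by the not-smaller modulus of concavity of $\|\cdot\|_\mathcal{J}$ (again Theorem \ref{inverse K-S}).

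The two set equalities and the transfer (i)$\Leftarrow$(ii) are pure bookkeeping with the Calkin correspondence. The genuine content, and the step I expect to be the main obstacle, is securing the \emph{sharp} constant $1$ on the left while only $2C$ is available on the right: the asymmetry arises because for diagonal operators in a common basis the product is computed entrywise with no rearrangement loss, whereas for general operators the singular values of a product are controlled only through (\ref{sv2}) and thus incur the factor $\|\sigma_2\|\leqslant 2C$. Care is also needed to track which quasi-norm's modulus of concavity enters each part.
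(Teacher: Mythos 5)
Your proposal is correct and follows essentially the same route as the paper: the lower bound comes from restricting the relevant supremum to diagonal operators in a fixed orthonormal basis, and the upper bound from the singular-value inequality (\ref{sv2}) together with $\|\sigma_2\|_{\mathcal{B}(E_\mathcal{J},E_\mathcal{J})}\leqslant 2C$, exactly as in the paper's proof. The only (cosmetic) difference is that you derive (i) from (ii) via the correspondence of Theorem \ref{inverse K-S}, whereas the paper runs the same two estimates once at the sequence level for (i) and once at the operator level for (ii).
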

\begin{proof} If $\xi=\xi^*\in E_\mathcal{J:I}$, then $x_\xi\in\mathcal{J:I}$ (see Theorem \ref{inverse K-S}). Hence, for every $\eta=\eta^*\in E_\mathcal{I}$ we have $x_\eta\in \mathcal{I}$ and $x_{\xi\eta}=x_\xi x_\eta\in\mathcal{J}$, i.e. $\xi\eta\in E_\mathcal{J}$. Therefore, due to Proposition \ref{lem}, $\xi\in E_\mathcal{J}:E_\mathcal{I}$, in addition,
\begin{gather*}
\begin{split}
\|\xi\|_{E_\mathcal{J:I}}&=\|x_\xi\|_\mathcal{J:I}=\sup\{\|x_\xi y\|_\mathcal{J}:y\in\mathcal{I},\|y\|_\mathcal{I}\leqslant 1\}\geqslant
\\
&\geqslant \sup\{\|x_\xi x_\eta\|_\mathcal{J}:\eta\in E_\mathcal{I},\|\eta\|_{E_\mathcal{I}}\leqslant 1\}=
\\
&=\sup\{\|x_{\xi\eta}\|_\mathcal{J}:\eta\in E_\mathcal{I},\|\eta\|_{E_\mathcal{I}}\leqslant 1\}=
\\
&=\sup\{\|\xi\eta\|_{E_\mathcal{J}}:\eta\in E_\mathcal{I},\|\eta\|_{E_\mathcal{I}}\leqslant 1\}=\|\xi\|_{E_\mathcal{J}:E_\mathcal{I}}.
\end{split}
\end{gather*}

Conversely, if $\xi=\xi^*\in E_\mathcal{J}: E_\mathcal{I}$, then $x_\xi\in C_{E_\mathcal{J}:E_\mathcal{I}}=\mathcal{J:I}$ (see Proposition \ref{prmultip}), and so $\xi\in E_\mathcal{J:I}$. Moreover,
\begin{gather*}
\begin{split}
\|\xi\|_{E_\mathcal{J:I}}&=\|x_\xi\|_\mathcal{J:I}=
\sup\{\|x_\xi y\|_\mathcal{J}:y\in\mathcal{I},\|y\|_\mathcal{I}\leqslant 1\}=
\\
&=\sup\{\|x_{\{s_n(x_\xi y)}\}\|_\mathcal{J}:y\in\mathcal{I} ,\|y\|_\mathcal{I}\leqslant 1\}
 \\
&\begin{smallmatrix}(\ref{sv2})\\\leqslant\end{smallmatrix}
\sup\{\|x_{\sigma_2(\{\xi s_n(y)\})}\|_\mathcal{J} :y\in\mathcal{I},\|y\|_\mathcal{I}\leqslant 1\}\leqslant
\\
&\leqslant 2C\sup\{\|\xi\{s_n(y)\}\|_{E_\mathcal{J}}:y\in\mathcal{I},\|y\|_\mathcal{I}\leqslant 1\}\leqslant
\\
&\leqslant
2C\sup\{\|\xi \eta\|_{E_\mathcal{J}}:\eta\in E_\mathcal{I},\|\eta\|_{E_\mathcal{I}}\leqslant 1\}=
2C\|\xi\|_{E_\mathcal{J}:E_\mathcal{I}}.
\end{split}
\end{gather*}

Thus, $E_\mathcal{J:I}=E_\mathcal{J}:E_\mathcal{I}$ and  $\|\xi\|_{E_\mathcal{J}:E_\mathcal{I}}\leqslant\|\xi\|_{E_{\mathcal{J:I}}}\leqslant 2C\|\xi\|_{E_\mathcal{J}:E_\mathcal{I}}$ for all $\xi\in E_\mathcal{J:I}$.

(ii) For an arbitrary $a\in\mathcal{J:I}$ we have
\begin{gather*}
\begin{split}
\|a\|_{C_{E_\mathcal{J}:E_\mathcal{I}}}&=\|\{s_n(a)\}\|_{E_\mathcal{I}:E_\mathcal{J}}=
\\
&=\sup\{\|\{s_n(a)\}\eta\|_{E_\mathcal{J}}:\eta\in E_\mathcal{I},\|\eta\|_{E_\mathcal{I}}\leqslant 1\}=
\\
&=\sup\{\|x_{\{s_n(a)\}}x_\eta\|_\mathcal{J}: x_\eta\in\mathcal{I},\|x_\eta\|_\mathcal{I}\leqslant 1\}\leqslant
\\
&\leqslant
\sup\{\|x_{\{s_n(a)\}}y\|_\mathcal{J}: y\in\mathcal{I},\|y\|_\mathcal{I}\leqslant 1\}=
\\
&=\|x_{\{s_n(a)\}}\|_\mathcal{J:I}=\|a\|_\mathcal{J:I}.
\end{split}
\end{gather*}

On the other hand,
\begin{gather*}
\begin{split}
\|a\|_\mathcal{J:I}&=\sup\{\|ay\|_\mathcal{J}:y\in \mathcal{I},\|y\|_\mathcal{I}\leqslant 1\}=
\\
&=\sup\{\|\{s_n(ay)\}_{n=1}^\infty\|_{E_\mathcal{J}}: y\in\mathcal{I},\|y\|_\mathcal{I}\leqslant 1\}
\\
&\begin{smallmatrix}(\ref{sv2})\\
\leqslant
\\~
\end{smallmatrix}
\sup\{\|\sigma_2(\{s_n(a)s_n(y)\}_{n=1}^\infty)\|_{E_\mathcal{J}}: y\in\mathcal{I},\|y\|_\mathcal{I}\leqslant 1\}=
\\
&=2C\sup\{\|\{s_n(a)s_n(y)\}\|_{E_\mathcal{J}}: y\in\mathcal{I},\|y\|_\mathcal{I}\leqslant 1\}\leqslant
\\
&=2C\|\{s_n(a)\}\|_{E_\mathcal{J}:E_\mathcal{I}}=2C\|a\|_{C_{E_\mathcal{J}:E_\mathcal{I}}}.
\end{split}
\end{gather*}
\end{proof}

Since $(\mathcal{J:I},\|\cdot\|_\mathcal{J:I})$
is a quasi-Banach space (see Theorem
\ref{th_multip}) and quasi-norms $\|\cdot\|_\mathcal{J:I}$ and
$\|\cdot\|_{C_{E_\mathcal{J}:E_\mathcal{I}}}$ are equivalent (see
Theorem \ref{th4.5} (ii)), we have the following corollary:
\begin{corollary}
For any symmetric quasi-Banach ideals $\mathcal{I,J}$ of compact operators from $\mathcal{B}(H),\mathcal{I}\nsubseteq\mathcal{J}$, the couple $(C_{E_\mathcal{J}:E_\mathcal{I}},\|\cdot\|_{C_{E_\mathcal{J}:E_\mathcal{I}}})$ is a symmetric quasi-Banach ideal of compact operators  from $\mathcal{B}(H)$.
\end{corollary}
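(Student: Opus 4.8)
The plan is to obtain the corollary purely by assembling the structural facts already established for the $\mathcal{J}$-dual space and then transferring completeness across an equivalence of quasi-norms. First I would invoke Proposition \ref{prmultip} to identify the underlying operator sets, $C_{E_\mathcal{J}:E_\mathcal{I}}=\mathcal{J:I}$; this common set is a two-sided ideal of compact operators, so the whole statement comes down to verifying that $\|\cdot\|_{C_{E_\mathcal{J}:E_\mathcal{I}}}$ is a complete symmetric quasi-norm on it. The symmetric quasi-norm part is already provided by Proposition \ref{S norm} (with modulus of concavity at most $2C^2$), leaving only completeness to be addressed.

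For completeness, the key point is that on the single space $\mathcal{J:I}=C_{E_\mathcal{J}:E_\mathcal{I}}$ we have two quasi-norms available, and Theorem \ref{th4.5}(ii) supplies the two-sided estimate
$$\|\cdot\|_{C_{E_\mathcal{J}:E_\mathcal{I}}}\leqslant\|\cdot\|_\mathcal{J:I}\leqslant 2C\|\cdot\|_{C_{E_\mathcal{J}:E_\mathcal{I}}},$$
showing the two are equivalent. Equivalent quasi-norms have exactly the same Cauchy sequences and the same convergent sequences with the same limits, so completeness of one is completeness of the other. Since $(\mathcal{J:I},\|\cdot\|_\mathcal{J:I})$ is complete by Theorem \ref{th_multip}, I conclude that $(C_{E_\mathcal{J}:E_\mathcal{I}},\|\cdot\|_{C_{E_\mathcal{J}:E_\mathcal{I}}})$ is complete, hence a symmetric quasi-Banach ideal.

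I do not expect a genuine obstacle, since this is bookkeeping resting on Propositions \ref{prmultip} and \ref{S norm} together with Theorems \ref{th_multip} and \ref{th4.5}. The only step deserving a moment's care is the transfer of completeness: one should note that the equivalence bounds each quasi-norm by a constant multiple of the other, which is precisely what keeps a sequence Cauchy simultaneously in both quasi-norms and forces the candidate limit to agree.
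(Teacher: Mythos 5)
Your proposal is correct and follows essentially the same route as the paper, which derives the corollary in one line from the completeness of $(\mathcal{J:I},\|\cdot\|_{\mathcal{J:I}})$ (Theorem \ref{th_multip}) together with the equivalence of the two quasi-norms given by Theorem \ref{th4.5}(ii), the symmetric quasi-norm property having been settled in Proposition \ref{S norm}. Your added remarks on why equivalence of quasi-norms transfers Cauchy sequences and limits are a harmless elaboration of the same argument.
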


The following theorem gives the full description of the set $Der(\mathcal{I,J})$.
\begin{theorem}\label{result}
(i) Let $\mathcal{I}$ and $\mathcal{J}$ be symmetric quasi-Banach ideals of compact operators from $\mathcal{B}(H),\mathcal{I}\nsubseteq\mathcal{J}$. Then any derivation $\delta$ from $\mathcal{I}$ to $\mathcal{J}$ has a form $\delta=\delta_a$ for some $a\in C_{E_\mathcal{J}:E_\mathcal{I}}$ and $\|a\|_{\mathcal{B}(H)}\leqslant\|\delta_a\|_\mathcal{B(I,J)}$. Conversely, $\delta_a\in Der(\mathcal{I,J})$ for all $a\in C_{E_\mathcal{J}:E_\mathcal{I}}$. In addition, $\|\delta_a\|_\mathcal{B(I,J)}\leqslant 2C\|a\|_\mathcal{J:I}$, where $C$ is the modulus of concavity of the quasi-norm $\|\cdot\|_\mathcal{J}$;

(ii) Let $G$ and $F$ be symmetric Banach (respectively, $F$ is a $p$-convex, $G$ is a $q$-convex quasi-Banach) sequence spaces in $c_0$ and $G\nsubseteq F$. Then any derivation $\delta\colon C_G\to C_F$ has a form $\delta=\delta_a$ for some $a\in C_{F:G}$ and $\|a\|_{\mathcal{B}(H)}\leqslant\|\delta_a\|_{\mathcal{B}(C_G,C_F)}$. Conversely, $\delta_a\in Der(C_G,C_F)$ for all $a\in C_{F:G}$. In addition, $\|\delta_a\|_{\mathcal{B}(C_G,C_F)}\leqslant 2C\|a\|_{C_F:C_G}$, where $C$ is the modulus of concavity of the quasi-norm $\|\cdot\|_{C_F}.$
\end{theorem}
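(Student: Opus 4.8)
The plan is to assemble the theorem from the results already established in Sections 3 and 4; the statement is essentially a repackaging of Theorems \ref{pr10}, \ref{th_multip} and \ref{prmultip}, so the only genuine computation will be the upper norm bound.

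For part (i), I would first invoke Theorem \ref{pr10}, which already asserts that every derivation $\delta\colon\mathcal{I}\to\mathcal{J}$ has the form $\delta=\delta_a$ for some $a\in\mathcal{J:I}$ with $\|a\|_{\mathcal{B}(H)}\leqslant\|\delta_a\|_{\mathcal{B(I,J)}}$, and conversely that $\delta_a\in\mathrm{Der}(\mathcal{I,J})$ for every $a\in\mathcal{J:I}$. Since $\mathcal{I}\nsubseteq\mathcal{J}$, Proposition \ref{prmultip} identifies $\mathcal{J:I}=C_{E_\mathcal{J}:E_\mathcal{I}}$, so the membership $a\in C_{E_\mathcal{J}:E_\mathcal{I}}$ and both directions of the characterization follow at once, as does the lower estimate $\|a\|_{\mathcal{B}(H)}\leqslant\|\delta_a\|_{\mathcal{B(I,J)}}$.

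The one step requiring work is the upper bound $\|\delta_a\|_{\mathcal{B(I,J)}}\leqslant 2C\|a\|_\mathcal{J:I}$. I would argue as follows: for $x\in\mathcal{I}$ the quasi-triangle inequality gives $\|\delta_a(x)\|_\mathcal{J}=\|ax-xa\|_\mathcal{J}\leqslant C(\|ax\|_\mathcal{J}+\|xa\|_\mathcal{J})$, where $C$ is the modulus of concavity of $\|\cdot\|_\mathcal{J}$. Theorem \ref{th_multip} bounds the first summand by $\|a\|_\mathcal{J:I}\|x\|_\mathcal{I}$, and the symmetry formula (\ref{norm a}) from Remark \ref{ax=xa} bounds the second by the same quantity. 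Taking the supremum over $\{x\in\mathcal{I}:\|x\|_\mathcal{I}\leqslant 1\}$ then produces the factor $2C$. The only point demanding care is the use of (\ref{norm a}), rather than a naive estimate of $\|xa\|_\mathcal{J}$, so that both summands carry the same constant $\|a\|_\mathcal{J:I}$ and the final factor is exactly $2C$.

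For part (ii) I would simply specialize part (i) to $\mathcal{I}=C_G$ and $\mathcal{J}=C_F$. By Theorem \ref{K_S} these are symmetric Banach (respectively, $p$-convex quasi-Banach) ideals, and Corollary \ref{cor_K-S} gives $E_{C_G}=G$ and $E_{C_F}=F$, whence $C_{E_{C_F}:E_{C_G}}=C_{F:G}$ and the hypothesis $G\nsubseteq F$ translates into $C_G\nsubseteq C_F$. Substituting these identifications into part (i) reproduces the statement verbatim, with $\mathcal{J:I}=C_F:C_G$ and $C$ the modulus of concavity of $\|\cdot\|_{C_F}$. I anticipate no real obstacle in the whole argument; it is a matter of correctly threading together the earlier identifications and tracking the single quasi-norm constant through the commutator estimate.
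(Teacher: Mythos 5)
Your proposal is correct and follows essentially the same route as the paper: Theorem \ref{pr10} for the representation $\delta=\delta_a$ with $a\in\mathcal{J:I}$ and the lower bound, the identification $\mathcal{J:I}=C_{E_\mathcal{J}:E_\mathcal{I}}$ (the paper cites Theorem \ref{th4.5}, which rests on Proposition \ref{prmultip}), and the quasi-triangle inequality combined with (\ref{norm a}) for the upper bound $2C\|a\|_{\mathcal{J:I}}$. Part (ii) is likewise obtained in the paper by specializing via Theorems \ref{K_S} and \ref{th4.5}, matching your use of Corollary \ref{cor_K-S}.
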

\begin{proof}(i) According to the Theorem \ref{pr10}, any derivation $\delta\colon \mathcal{I}\to \mathcal{J}$ has a form $\delta=\delta_a$ for some $a\in \mathcal{J:I}$. Since $\mathcal{J:I}=C_{E_\mathcal{J}:E_\mathcal{I}}$ (see Theorem \ref{th4.5}), we have $a\in C_{E_\mathcal{J}:E_\mathcal{I}}$  and $\|a\|_{\mathcal{B}(H)}\leqslant\|\delta_a\|_\mathcal{B(I,J)}$.

Conversely, if $a\in C_{E_\mathcal{J}:E_\mathcal{J}}$, then $a\in \mathcal{J:I}$, and, according to Theorem \ref{pr10}, $\delta_a\in Der(\mathcal{I,J})$.

Moreover,
\begin{gather}\label{norm_der}
\begin{split}
\|\delta_a\|_\mathcal{B(I,J)}&= \sup\{\|\delta_a(x)\|_\mathcal{J}:x\in\mathcal{I},\|x\|_\mathcal{I}\leqslant 1\}=
\\
&=
\sup\{\|ax-xa\|_\mathcal{J}:x\in\mathcal{I},\|x\|_\mathcal{I}\leqslant 1\}\leqslant
\\
&\leqslant
\sup\{C(\|ax\|_\mathcal{J}+\|xa\|_\mathcal{J}):x\in\mathcal{I},\|x\|_\mathcal{I}\leqslant 1\}=
\\
&\begin{smallmatrix}(\ref{norm a})\\=\\~\end{smallmatrix}
2C\sup\{\|ax\|_\mathcal{J}:x\in\mathcal{I},\|x\|_\mathcal{I}\leqslant 1\}=2C\|a\|_\mathcal{J:I}.
\end{split}
\end{gather}

Item (ii) follows from (i) and Theorems \ref{K_S} and \ref{th4.5}. The inequality $\|\delta_a\|_{\mathcal{B}(C_F,C_G)}\leqslant 2C\|a\|_{C_G:C_F}$ is proven in the same manner.
\end{proof}

It is of interest to compare the estimates obtained in Theorem
\ref{result} with the result of L.Zsido \cite{Zsido}, who established that for any derivation $\delta = \delta_a, a\in B(H)$ acting in a von Neumann algebra
 $\mathcal{M}\in \mathcal{B}(H)$ the inequality $\|\delta_a\|_{\mathcal{M}\to\mathcal{M}}\leqslant 2\|a\|_{\mathcal{B}(H)}$ holds. The inequalities established in Theorem \ref{result} for derivations acting in arbitrary symmetric quasi-Banach ideals of compact operators are very similar to those of Zsido. Note, that our proof of these inequalities is based on the technique of $\mathcal{J}$-dual spaces and differs quite markedly from the techniques used in \cite{Zsido}.

We illustrate Theorem \ref{result} with an example drawn from the
theory of Lorentz and Marcinkiewicz sequence spaces. Let
$\omega=\{\omega_n\}_{n=1}^\infty$ be a decreasing weight sequence
of positive numbers. Letting
$W(j)=\sum\limits_{n=1}^jw_n,j\in\mathbb{N}$, we shall assume that
$W(\infty)=\sum\limits_{n=1}^\infty w_n=\infty$.

The Lorentz sequence space $l_\omega^p,1\leqslant p<\infty$, consists of all sequences
$\xi=\{\xi_n\}_{n=1}^\infty\in c_0$ such that $$\|\xi\|_{l_\omega^p}=\biggl(\sum\limits_{n=1}^\infty (\xi_n^*)^pw_n\biggl)^{\frac{1}{p}}<\infty.$$

The Marcinkiewicz sequence space $m_W^p,1\leqslant p<\infty$, is
the space of all sequences $\xi=\{\xi_n\}_{n=1}^\infty\in c_0$
satisfying
$$\|\xi\|_{m_W^p}=\sup\limits_{k\geq 1}\biggl(\frac{\sum\limits_{n=1}^k(\xi^*_n)^p}{W_k}\biggl)^{\frac{1}{p}}<\infty.$$

It is well known (see e.g.\cite{K-P} and \cite{M-P}, Proposition
1) that $(l_\omega^p,\|\cdot\|_{l_\omega^p})$ and
$(m_W^p,\|\cdot\|_{m_W^p})$ are symmetric Banach sequence spaces
in $c_0$.

Hence, $(C_{l_\omega^p},\|\cdot\|_{C_{l_\omega^p}})$ and
$(C_{m_W^p},\|\cdot\|_{C_{m_W^p}})$ are symmetric Banach ideals of
compact operators (Theorem \ref{K_S}). Since  $l_1:
l_\omega=m_W^1$ (see e.g. \cite{K-P}) it follows that $l_p:
l_\omega^p=m_W^p$ for every $1\leqslant p<\infty$ (\cite{M-P}, \S
2). By Theorem \ref{th4.5}, $C_p:C_{l_\omega^p}=C_{m_W^p}$ and
$\|a\|_{C_p:C_{l_\omega^p}}\leqslant 2\|a\|_{C_{m_W^p}}$ for all
$a\in C_p:C_{l_\omega^p}$. From Theorem \ref{result} (ii), we
obtain the following example significantly extending
similar results from \cite{Hoffman}.

\begin{corollary} A linear mapping $\delta\colon C_{l_\omega^p}\to C_p, 1\leqslant p<\infty$
is a derivation if and only if $\delta=\delta_a$  for some $a\in C_{m_W^p}$,
in addition, $\|\delta\|_{\mathcal{B}(C_{l_\omega^p},C_p)}\leqslant 2\|a\|_{C_p:C_{l_\omega^p}}
\leqslant 4\|a\|_{C_{m_W^p}}$.
\end{corollary}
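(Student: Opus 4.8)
The plan is to deduce this corollary directly from Theorem \ref{result}(ii), applied to the pair of symmetric Banach sequence spaces $G=l_\omega^p$ and $F=l_p$, combined with the duality identity $l_p:l_\omega^p=m_W^p$ and the norm comparison furnished by Theorem \ref{th4.5}(ii). Most of the preparatory facts are already recorded just above the statement: $(l_\omega^p,\|\cdot\|_{l_\omega^p})$, $(m_W^p,\|\cdot\|_{m_W^p})$ and $(l_p,\|\cdot\|_p)$ are symmetric Banach sequence spaces in $c_0$, so by Theorem \ref{K_S} the ideals $C_{l_\omega^p}$, $C_{m_W^p}$ and $C_p=C_{l_p}$ are symmetric Banach ideals of compact operators. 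First I would verify the hypothesis $G\nsubseteq F$ of Theorem \ref{result}(ii), i.e. $l_\omega^p\nsubseteq l_p$; this is precisely the nontriviality which makes $F:G=l_p:l_\omega^p$ a proper dual (by Proposition \ref{pr_subsetc0} it is equivalent to $m_W^p\neq l_\infty$), and it follows from the standing assumptions on the weight $\omega$, in particular from $W(\infty)=\infty$.

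Granting this, Theorem \ref{result}(ii) applies verbatim: a linear map $\delta\colon C_{l_\omega^p}\to C_p$ is a derivation if and only if $\delta=\delta_a$ for some $a\in C_{F:G}$, with $\|a\|_{\mathcal{B}(H)}\leqslant\|\delta_a\|_{\mathcal{B}(C_{l_\omega^p},C_p)}$ and $\|\delta_a\|_{\mathcal{B}(C_{l_\omega^p},C_p)}\leqslant 2C\|a\|_{C_p:C_{l_\omega^p}}$, where $C$ is the modulus of concavity of $\|\cdot\|_{C_p}$. Since $C_p$ is a \emph{Banach} ideal, its modulus of concavity is $C=1$, which already yields the first half of the asserted estimate, namely $\|\delta\|_{\mathcal{B}(C_{l_\omega^p},C_p)}\leqslant 2\|a\|_{C_p:C_{l_\omega^p}}$. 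To identify the membership space of $a$ I would invoke $l_1:l_\omega=m_W^1$ from \cite{K-P} and its $p$-th power version $l_p:l_\omega^p=m_W^p$ from (\cite{M-P}, \S 2); combined with $\mathcal{J:I}=C_{E_\mathcal{J}:E_\mathcal{I}}$ of Theorem \ref{th4.5}(ii) this gives $C_p:C_{l_\omega^p}=C_{l_p:l_\omega^p}=C_{m_W^p}$, so $a$ may be taken in $C_{m_W^p}$, as claimed.

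The second inequality of the corollary then comes from the right-hand bound in Theorem \ref{th4.5}(ii). Here the relevant modulus of concavity is that of $\|\cdot\|_{E_\mathcal{J}}=\|\cdot\|_{l_p}$, which is again $1$, so $\|a\|_{C_p:C_{l_\omega^p}}\leqslant 2\|\{s_n(a)\}\|_{l_p:l_\omega^p}=2\|\{s_n(a)\}\|_{m_W^p}=2\|a\|_{C_{m_W^p}}$. Chaining the two estimates produces $\|\delta\|_{\mathcal{B}(C_{l_\omega^p},C_p)}\leqslant 2\|a\|_{C_p:C_{l_\omega^p}}\leqslant 4\|a\|_{C_{m_W^p}}$. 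The whole argument is essentially bookkeeping, since every structural step has been established in the main theorems; the one point demanding care rather than mechanical substitution is confirming that the weight conditions guarantee $l_\omega^p\nsubseteq l_p$ (so that Theorems \ref{th4.5} and \ref{result} are genuinely applicable) together with the quoted lattice-duality identities $l_1:l_\omega=m_W^1$ and $l_p:l_\omega^p=m_W^p$.
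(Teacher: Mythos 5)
Your argument is correct and follows the paper's own route exactly: both deduce the corollary from Theorem \ref{result}(ii) applied to $G=l_\omega^p$, $F=l_p$, together with the identity $l_p:l_\omega^p=m_W^p$ quoted from \cite{K-P} and \cite{M-P} and the norm comparison $\|a\|_{C_p:C_{l_\omega^p}}\leqslant 2\|a\|_{C_{m_W^p}}$ of Theorem \ref{th4.5}(ii), all moduli of concavity being $1$ in the Banach case. One small caveat: the non-inclusion $l_\omega^p\nsubseteq l_p$ that you rightly insist on checking does not follow from $W(\infty)=\infty$ alone (take $w_n\equiv 1$, for which $l_\omega^p=l_p$); it requires $w_n\downarrow 0$, which is implicit in the Lorentz-space framework of \cite{K-P} and \cite{M-P} but is a hypothesis the paper itself passes over silently.
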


In conclusion, note that, by Theorem \ref{th7} and
(\ref{norm_der}), any derivation
$\delta$ from a symmetric quasi-Banach ideal $\mathcal{I}$ into a symmetric quasi-Banach ideal $\mathcal{J}$, such that $\mathcal{I}\subseteq\mathcal{J}$, has
a form $\delta=\delta_a$ for some $a\in\mathcal{B}(H)$ and, in
addition,
$\|a\|_{\mathcal{B}(H)}\leqslant\|\delta_a\|_\mathcal{B(I,J)}\leqslant
2C\|a\|_{\mathcal{J:I}}$, where $C$ is the modulus of concavity
of the quasi-norm $\|\cdot\|_\mathcal{J}$. Moreover, for the case when $\mathcal{I=J}$ we have $\|a\|_{\mathcal{B}(H)}\leqslant\|\delta_a\|_\mathcal{B(I,J)}\leqslant
2C\|a\|_{\mathcal{B}(H)}$, where $C$ is the modulus of concavity
of the quasi-norm $\|\cdot\|_\mathcal{I}$ (see (\ref{I:I})). This
complements results from \cite{B-S_d}.

\end{document}